\numberwithin{equation}{section}
\newtheorem{theorem}{Theorem}[section]
\newtheorem{lemma}[theorem]{Lemma}
\newtheorem{proposition}[theorem]{Proposition}
\newtheorem{definition}[theorem]{Definition}
\newtheorem{assumption}[theorem]{Assumption}
\newtheorem{remark}[theorem]{Remark}
\newcommand{\st}{\textnormal{s.t.}}
\newcommand{\RR}{\mathbb{R}}
\newcommand{\argmin}{\mathop{\rm argmin}}
\newcommand{\LCal}{\mathcal{L}}
\newcommand{\dist}{\mathop{\rm dist}}
\newcommand{\be}{\begin{equation}}
\newcommand{\ee}{\end{equation}}
\newcommand{\ba}{\begin{array}}
\newcommand{\ea}{\end{array}}
\newcommand{\bpm}{\begin{pmatrix}}
\newcommand{\epm}{\end{pmatrix}}
\newcommand{\bea}{\begin{eqnarray}}
\newcommand{\eea}{\end{eqnarray}}
\newcommand{\diam}{\mathrm{diam}}
\newcommand{\etal}{{et al.\ }}
\newcommand{\BCal}{\mathcal{B}}
\newcommand{\CCal}{\mathcal{C}}
\newcommand{\ECal}{\mathcal{E}}
\newcommand{\NCal}{\mathcal{N}}
\newcommand{\TCal}{\mathcal{T}}
\newcommand{\XCal}{\mathcal{X}}
\newcommand{\ZCal}{\mathcal{Z}}
\begin{document}
\title{Structured Nonconvex and Nonsmooth Optimization: \\
Algorithms and Iteration Complexity Analysis}

\author{
Bo Jiang
\thanks{Research Center for Management Science and Data Analytics, School of Information Management and Engineering, Shanghai University of Finance and Economics, Shanghai 200433, China. Research of this author was supported in part by National Natural Science Foundation of China (Grant 11401364).}
\and Tianyi Lin
\thanks{Department of Industrial Engineering and Operations Research,
UC Berkeley, Berkeley, CA 94720, USA.}
\and Shiqian Ma
\thanks{Department of Mathematics, UC Davis, Davis, CA 95616, USA. Research of this author was supported in part by a startup package in Department of Mathematics at UC Davis.}
\and Shuzhong Zhang
\thanks{Department of Industrial and Systems Engineering, University of Minnesota, Minneapolis, MN 55455, USA. Research of this author was supported in part by the National Science Foundation (Grant CMMI-1462408).}
}

\date{\today}

\maketitle

\begin{abstract}
Nonconvex and nonsmooth optimization problems are frequently encountered in much of statistics, business, science and engineering, but they are not yet widely recognized as a {\it technology} in the sense of scalability. A reason for this relatively low degree of popularity is the lack of a well developed system of theory and algorithms to support the applications, as is the case for its convex counterpart. This paper aims to take one step in the direction of {\it disciplined nonconvex and nonsmooth optimization}. In particular, we consider in this paper some constrained nonconvex optimization models in block decision variables, with or without coupled affine constraints. In the absence of coupled constraints, we show a sublinear rate of convergence to an $\epsilon$-stationary solution in the form of variational inequality for a generalized conditional gradient method, where the convergence rate is dependent on the H\"olderian continuity of the gradient of the smooth part of the objective. For the model with coupled affine constraints, we introduce corresponding $\epsilon$-stationarity conditions, and apply two proximal-type variants of the ADMM to solve such a model, assuming the proximal ADMM updates can be implemented for all the block variables except for the last block, for which either a gradient step or a majorization-minimization step is implemented. We show an iteration complexity bound of $O(1/\epsilon^2)$ to reach an $\epsilon$-stationary solution for both algorithms. Moreover, we show that the same iteration complexity of a proximal BCD method follows immediately. Numerical results are provided to illustrate the efficacy of the proposed algorithms for tensor robust PCA.

\vspace{1cm}

\noindent {\bf Keywords:} Structured Nonconvex Optimization, $\epsilon$-Stationary Solution, Iteration Complexity, Conditional Gradient Method, Alternating Direction Method of Multipliers, Block Coordinate Descent Method

\vspace{0.50cm}

\noindent {\bf Mathematics Subject Classification:} 90C26, 90C06, 90C60.

\end{abstract}

\newpage

\section{Introduction}\label{sec:intro}
In this paper, we consider the following nonconvex and nonsmooth optimization problem with multiple block variables:
\be\label{noncvx-block-opt} \ba{ll}
\min &  f(x_1,x_2,\cdots, x_N) + \sum\limits_{i=1}^{N-1} r_i(x_i) \\
\st  & \sum_{i=1}^N A_i x_i = b, \ x_i\in \XCal_i, \ i=1,\ldots,N-1,\ea
\ee
where $f$ is differentiable and possibly nonconvex, and each $r_i$ is possibly nonsmooth and nonconvex, $i=1,\ldots, N-1$; $A_i\in\RR^{m\times n_i}$, $b\in\RR^m$, $x_i\in\RR^{n_i}$; and $\XCal_i\subseteq\RR^{n_i}$ are convex sets, $i=1,2,\ldots,N-1$. {One restriction of model~\eqref{noncvx-block-opt} is that the objective function is required to be smooth with respect to the last block variable $x_N$. However,
in Section~\ref{sec:extension} we shall extend the result to cover the general case where $r_N(x_N)$ may be present and that $x_N$ maybe constrained as well.}
A special case of \eqref{noncvx-block-opt} is when the affine constraints are absent, and there is no block structure of the variables (i.e., $x = x_1$ and other block variables do not show up in \eqref{noncvx-block-opt}), which leads to the following more compact form
\be\label{prob:nonconvex}
\min \ \Phi(x) := f(x) + r(x), \ \st \ x \in S \subset\RR^n,
\ee
where $S$ is a convex and compact set.
In this paper, we propose several first-order algorithms for computing an $\epsilon$-stationary solution (to be defined later) for \eqref{noncvx-block-opt} and \eqref{prob:nonconvex}, and analyze their iteration complexities.
{Throughout, we assume the following condition.
\begin{assumption}
The sets of the stationary solutions for \eqref{noncvx-block-opt} and \eqref{prob:nonconvex} are non-empty.
\end{assumption}
}

Problem \eqref{noncvx-block-opt} arises from a variety of interesting applications. For example, one of the nonconvex models for matrix robust PCA can be cast as follows (see, e.g., \cite{Shen-oms-2014}), which seeks to decompose a given matrix $M\in\RR^{m\times n}$ into a superposition of a low-rank matrix $Z$, a sparse matrix $E$ and a noise matrix $B$:
\be\label{M-RPCA} \min_{X,Y,Z,E,B} \ \|Z-XY^\top\|_F^2 + \alpha \mathcal{R}(E), \ \st \ M = Z + E + B, \ \|B\|_F \leq \eta,\ee
where $X\in\RR^{m\times r}$, $Y\in\RR^{n\times r}$, with $r < \min(m,n)$ being the estimated rank of $Z$; $\eta>0$ is the noise level, $\alpha>0$ is a weighting parameter; $\mathcal{R}(E)$
is a regularization function that can improve the sparsity of $E$. One of the widely used regularization functions is the $\ell_1$ norm, which is convex and nonsmooth. However, there are also many nonconvex regularization functions that are widely used in statistical learning and information theory, such as smoothly clipped absolute deviation (SCAD) \cite{Fan-Variable-2001}, log-sum penalty (LSP) \cite{Candes-Enhancing-2008}, minimax concave penalty (MCP) \cite{Zhang-Nearly-2010}, and capped-$\ell_1$ penalty \cite{Zhang-Analysis-2010,Zhang-Multi-2013},
and they are nonsmooth at point $0$ if composed with the absolute value function, which is usually the case in statistical learning. 
Clearly \eqref{M-RPCA} is in the form of \eqref{noncvx-block-opt}. Another example of the form \eqref{noncvx-block-opt} is the following nonconvex tensor robust PCA model (see, e.g., \cite{Xu-APGM-Tucker-2015}), which seeks to decompose a given tensor $\TCal\in\RR^{n_1\times n_2\times\cdots\times n_d}$ into a superposition of a low-rank tensor $\ZCal$, a sparse tensor $\ECal$ and a noise tensor $\BCal$:
\[
\min_{X_i,\CCal,\ZCal,\ECal,\BCal} \ \|\ZCal-\CCal\times_1 X_1\times_2 X_2 \times_3 \cdots \times_d X_d\|_F^2 + \alpha \mathcal{R}( \ECal ), \ \st \ \TCal = \ZCal + \ECal + \BCal, \ \|\BCal\|_F \leq \eta,\]
where $\CCal$ is the core tensor that has a smaller size than $\ZCal$, and $X_i$ are matrices with appropriate sizes, $i=1,\ldots,d$. In fact, the ``low-rank'' tensor in the above model corresponds to the tensor with a small core; however a recent work \cite{Jiang-Yang-Zhang-2016} demonstrates that the CP-rank of the core regardless of its size could be as large as the original tensor. Therefore, if one wants to find the low CP-rank decomposition, then the following model is preferred:
\[
\min_{X_i,\ZCal, \ECal, \BCal} \|\ZCal -\llbracket X_1,X_2,\cdots,X_{d} \rrbracket \|^{2}+\alpha \, \mathcal{R}(\ECal) + \alpha_{\NCal}\| \BCal \|_F^2, \ \st \ \TCal = \ZCal + \ECal + \BCal,
\]
for $X_i = [a^{i,1},a^{i,2},\cdots, a^{i,R}]\in \RR^{n_i \times R}$, $1\leq i \leq d$ and
$\llbracket X_1,X_2,\cdots, X_d\rrbracket : = \sum \limits_{r=1}^{R}a^{1,r}\otimes a^{2,r}\otimes \cdots \otimes a^{d,r},$
where``$\otimes$'' denotes the outer product of vectors, and $R$ is an estimation of the CP-rank.
In addition, the so-called sparse tensor PCA problem \cite{Allen-TPCA-ATSTATS-2012}, which seeks the best sparse rank-one approximation for a given $d$-th order tensor $\TCal$, can also be formulated in the form of \eqref{noncvx-block-opt}:
\[
\min \ -\TCal  (x_1, x_2 , \cdots , x_d) + \alpha \sum_{i=1}^d \mathcal{R}( x_i ), \ \st \ x_i \in S_i = \{x \,|\, \|x\|_2^2 \le 1\}, \  i=1,2,\ldots,d,
\]
where $\TCal  (x_1, x_2 , \cdots , x_d) = \sum_{i_1,\dots, i_d}\TCal _{i_1,\dots, i_d}(x_1)_{i_1}\cdots (x_d)_{i_d}$.

The convergence and iteration complexity for various nonconvex and nonsmooth optimization problems
have recently attracted considerable research attention; see e.g. \cite{Bian-Chen-2013, Bian-Chen-2014, Bian-Chen-Ye-2015, Bolte-characterizations-2010,Attouch-Bolte-Svaiter-2010, Bolte-Sabach-Teboulle-2014, Chen-Ge-Wang-Ye-2014, Curtis-Robinson-Samadi-2016, GeHeHe14, glz13, Liu-Ma-Dai-Zhang-2015, Martinez-Raydan-2016}. In this paper, we study several solution methods that use only the first-order information of the objective function, including a generalized conditional gradient method, variants of alternating direction method of multipliers, and a proximal block coordinate descent method, for solving \eqref{noncvx-block-opt} and \eqref{prob:nonconvex}. Specifically, we apply a generalized conditional gradient (GCG) method to solve \eqref{prob:nonconvex}. We prove that the GCG can find an $\epsilon$-stationary solution for \eqref{prob:nonconvex} in $O(\epsilon^{-q})$ iterations under certain mild conditions, where $q$ is a parameter in the H\"{o}lder condition that characterizes the degree of smoothness for $f$. {In other words, the convergence rate of the algorithm depends on the degree of ``smoothness'' of the objective function.
It should be noted that a similar iteration bound that depends on the parameter $q$ was reported for convex problems \cite{Bredies09}, and for general nonconvex problem, \cite{Bredies09b} analyzed the convergence results, but there was no iteration complexity result.}
Furthermore, we show that if $f$ is concave, then GCG finds an $\epsilon$-stationary solution for \eqref{prob:nonconvex} in $O(1/\epsilon)$ iterations.
For the affinely constrained problem \eqref{noncvx-block-opt},
we propose two algorithms (called proximal ADMM-g and proximal ADMM-m in this paper), both can be viewed as variants of the alternating direction method of multipliers (ADMM). Recently,
there has been an emerging research interest on the ADMM for nonconvex problems (see, e.g., \cite{HongLuoRazaviyayn15,LiPong15,WangYinZeng15,WangCaoXu15,Hong16,Ames-Hong-2016,Yang-Pong-Chen-2016}). However, the results in \cite{LiPong15,WangYinZeng15,WangCaoXu15,Yang-Pong-Chen-2016} only show that the iterates produced by the ADMM converge to a stationary solution without providing an iteration complexity analysis. Moreover, the objective function is required to satisfy the so-called Kurdyka-{\L}ojasiewicz (KL) property~\cite{Kurdyka-1998, Lojasiewicz-1963, Bolte-Daniilidis-Lewis-2006, Bolte-Daniilidis-Lewis-Shiota-2007} to enable those convergence results. In \cite{HongLuoRazaviyayn15}, Hong, Luo and Razaviyayn analyzed the convergence of the ADMM for solving nonconvex consensus and sharing problems. Note that they also analyzed the iteration complexity of the ADMM for the consensus problem. However, they require the nonconvex part of the objective function to be smooth, and nonsmooth part to be convex. In contrast, $r_i$ in our model \eqref{noncvx-block-opt} can be nonconvex and nonsmooth at the same time. Moreover, we allow general constraints $x_i\in\XCal_i,i=1,\ldots,N-1$, while the consensus problem in \cite{HongLuoRazaviyayn15} only allows such constraint for one block variable. A very recent work of Hong \cite{Hong16} discussed the iteration complexity of an augmented Lagrangian method for finding an $\epsilon$-stationary solution for the following problem:
\be\label{prob:nonconvex-linear-constraint}
\min \ f(x), \ \st \ Ax= b, x \in \RR^n,
\ee
under the assumption that $f$ is differentiable.
We will compare our results with \cite{Hong16} in more details in Section \ref{sec:admm}.

Before proceeding, let us first summarize:


{\bf Our contributions.} 

\begin{itemize}
\item[(i)] {We provide definitions of $\epsilon$-stationary solution for \eqref{noncvx-block-opt} and \eqref{prob:nonconvex} using the variational inequalities.} For \eqref{noncvx-block-opt}, our definition of the $\epsilon$-stationary solution
allows each $r_i$ to be nonsmooth and nonconvex.
\item[(ii)] We study a generalized conditional gradient method with a suitable line search rule for solving \eqref{prob:nonconvex}. We assume that the gradient of $f$ satisfies a H\"{o}lder condition, and analyze its iteration complexity for obtaining an $\epsilon$-stationary solution for \eqref{prob:nonconvex}. After we released the first version of this paper, we noticed there are several recent works that study the iteration complexity of conditional gradient method for nonconvex problems. However, our results are different from these. For example, the convergence rate given in \cite{Yu-Zhang-Schuurmans-2014} is worse than ours, and \cite{Lacoste-Julien-2016, Lafond-Wai-Moulines-2016} only consider smooth nonconvex problem with Lipschitz continuous gradient, but our results cover 
    nonsmooth models.
\item[(iii)] We study two ADMM variants (proximal ADMM-g and proximal ADMM-m) for solving \eqref{noncvx-block-opt}, and analyze their iteration complexities for obtaining an $\epsilon$-stationary solution for nonconvex problem \eqref{noncvx-block-opt}. In addition, the setup and the assumptions of our model are different from other recent works. For instance,
\cite{LiPong15} considers a two-block nonconvex problem with an identity coefficient matrix for one block variable in the linear constraint, and requires the coerciveness of the objective or the boundedness of the domain. \cite{WangYinZeng15} assumes that the objective function is coercive over the feasible set
and the nonsmooth objective is {\it restricted prox-regular} or {\it piece-wise linear}. While our algorithm assumes the gradient of the smooth part of the objective function is Lipschitz continuous and the nonsmooth part does not involve the last block variable, which is weaker than the assumptions on the objective functions in \cite{LiPong15, WangYinZeng15}.

\item[(iv)] As an extension, we also show how to use proximal ADMM-g and proximal ADMM-m to find an $\epsilon$-stationary solution for \eqref{noncvx-block-opt} without assuming any condition  on $A_N$.
\item[(v)] When the affine constraints are absent in model \eqref{noncvx-block-opt}, as a by-product,
we demonstrate that the iteration complexity of proximal block coordinate descent (BCD) method with cyclic order can be obtained directly from that of proximal ADMM-g and proximal ADMM-m.
{Although \cite{Bolte-Sabach-Teboulle-2014} gives an iteration complexity result of nonconvex BCD, it requires the KL property, and the complexity depends on a parameter in the KL condition, which is typically unknown.}
\end{itemize}


{\bf Notation.} $\|x\|_2$ denotes the Euclidean norm of vector $x$, and $\|x\|_{H}^2$ denotes $ x^{\top}H x$ for some positive definite matrix $H$. For set $S$ and scalar $p>1$, we denote
$\diam_p(S) := \max_{x,y\, \in S}\|x -y \|_p$,
where $\|x \|_p = (\sum_{i=1}^{n}|x_i|^p)^{1/p}$. Without specification,
we denote $\|x\|=\|x\|_2$ and $\diam(S)=\diam_2(S)$ for short. We use $\dist(x,S)$ to denote the Euclidean distance of vector $x$ to set $S$. Given a matrix $A$, its spectral norm and smallest singular value are denoted by $\|A\|_2$ and $\sigma_{\min}(A)$ respectively.
We use $\lceil a\rceil$ to denote the ceiling of $a$.


{\bf Organization.} The rest of this paper is organized as follows. In Section \ref{sec:gcg} we introduce the notion of $\epsilon$-stationary solution for \eqref{prob:nonconvex} and apply a generalized conditional gradient method to solve \eqref{prob:nonconvex} and analyze its iteration complexity for obtaining an $\epsilon$-stationary solution for  \eqref{prob:nonconvex}. In Section \ref{sec:admm} we give two definitions of $\epsilon$-stationarity for \eqref{noncvx-block-opt} under different settings and propose two ADMM variants that solve \eqref{noncvx-block-opt} and analyze their iteration complexities to reach an $\epsilon$-stationary solution for \eqref{noncvx-block-opt}. In Section \ref{sec:extension} we provide some extensions of the results in Section \ref{sec:admm}. In particular, we first show how to remove some of the conditions that we assume in Section \ref{sec:admm}, and then we apply a proximal BCD method to solve \eqref{noncvx-block-opt} without affine constraints and provide an iteration complexity analysis.
In Section \ref{numerical}, we present numerical results to illustrate the practical efficiency of the proposed algorithms.

\section{A generalized conditional gradient method} 
\label{sec:gcg}

In this section, we study a GCG method for solving \eqref{prob:nonconvex} and analyze its iteration complexity. The conditional gradient (CG) method, also known as the Frank-Wolfe method, was originally proposed in \cite{FrankWolfe56}, and regained a lot of popularity recently due to its capability in solving large-scale problems (see, \cite{FreundGrigas13,Jaggi-ICML-2013,Mu-Zhang-Wright-Goldfarb-2016,Nemirovski-CG-2015,Bach-CG-2015,Beck-CG-2015,Lan-CG-2014}). However, these works focus on solving convex problems. Bredies \etal \cite{Bredies09b} {proved the convergence of a generalized conditional gradient method for solving nonconvex problems in Hilbert space. In this section, by introducing a suitable line search rule, we provide an iteration complexity analysis for this algorithm.}


We make the following assumption in this section regarding \eqref{prob:nonconvex}.
\begin{assumption}\label{assump-p}
In \eqref{prob:nonconvex}, $r(x)$ is convex and nonsmooth, and the constraint set $S$ is convex and compact. Moreover, $f$ is differentiable and there exist some $p>1$ and $\rho >0$ such that
\begin{equation}\label{ineq:p-power}
f(y) \le f(x) + \nabla f(x)^{\top}(y-x) + \frac{\rho}{2}\|y - x\|^p_p, \quad \forall x,y\in S.
\end{equation}
\end{assumption}

The above inequality \eqref{ineq:p-power} is also known as the H\"older condition and was used in other works on first-order algorithms (e.g.,
\cite{Devo-Fran-Nesterov-2013}).
It can be shown that \eqref{ineq:p-power} holds for a variety of functions. {For instance, \eqref{ineq:p-power} holds for any $p$ when $f$ is concave, and is valid for $p=2$
when $\nabla f$ is Lipschitz continuous.}

\subsection{ An $\epsilon$-stationary solution for problem \eqref{prob:nonconvex}}
For smooth unconstrained problem $ \min_x f(x)$, it is natural to define the $\epsilon$-stationary solution using the criterion
$\|\nabla f(x)\|_2\leq \epsilon.$ Nesterov \cite{NesterovConvexBook2004} and Cartis \etal \cite{CartisGouldToint11} showed that the gradient descent type methods with properly chosen step size need $O({1}/{\epsilon^2})$ iterations to find such a solution. Moreover, Cartis \etal \cite{CartisGouldToint10} constructed an example showing that the $O({1}/{\epsilon^2})$ iteration complexity is tight for the steepest descent type algorithm.
However, the case for the constrained nonsmooth nonconvex optimization is subtler. There exist some works on how to define $\epsilon$-optimality condition for the local minimizers of various constrained nonconvex problems \cite{CartisGouldToint13a, Dutta-Deb-Tulshyan-Arora-2013, glz13, Hong16, Ngai-Luc-Thera-2002}. Cartis \etal \cite{CartisGouldToint13a} proposed an approximate measure for smooth problem with convex set constraint.
\cite{Ngai-Luc-Thera-2002} discussed general nonsmooth nonconvex problem in Banach space by using the tool of limiting Fr{\'e}chet $\epsilon$-subdifferential.
{\cite{Dutta-Deb-Tulshyan-Arora-2013} showed that under certain conditions $\epsilon$-KKT solutions can converge to a stationary solution as $\epsilon \to 0$. Here the $\epsilon$-KKT solution is defined by relaxing the complimentary slackness and equilibrium equations of KKT conditions.}
Ghadimi \etal \cite{glz13} considered the following notion of $\epsilon$-stationary solution for \eqref{prob:nonconvex}: 
\be\label{P-S}
P_{S}(x,\gamma):=\frac{1}{\gamma} (x - x^+), \quad \mbox{where }x^+ = \arg\min_{y \in S}{\nabla f(x)^{\top}y + \frac{1}{\gamma}V(y,x) + r(y)},
\ee
where $\gamma>0$ and $V$ is a prox-function. 
They proposed a projected gradient algorithm to solve \eqref{prob:nonconvex} and proved that it takes no more than $O({1}/{\epsilon^2})$ iterations to find an $x$ satisfying
\begin{equation}\label{measure-lan}
\| P_{S}(x,\gamma) \|_2^2 \le \epsilon.
\end{equation}
Our definition of an $\epsilon$-stationary solution for \eqref{prob:nonconvex} is as follows.
\begin{definition}\label{def:eps-nonconvex}
We call $x$ an $\epsilon$-stationary solution ($\epsilon\geq 0$) for \eqref{prob:nonconvex} if the following holds:
\begin{equation}\label{formu:epsilon-KKT}
\psi_S(x):=\inf_{y\in S} \{ \nabla f(x)^{\top}(y-x) + r(y) - r(x) \} \geq - \epsilon .
\end{equation}
If $\epsilon=0$, then $x$ is called a stationary solution for \eqref{prob:nonconvex}.
\end{definition}

Observe that if $r(\cdot)$ is continuous then any cluster point of $\epsilon$-stationary solutions defined above is a stationary solution for \eqref{prob:nonconvex} as $\epsilon \to 0$. Moreover, the stationarity condition is weaker than the usual KKT optimality condition. To see this, we first rewrite \eqref{prob:nonconvex} as the following equivalent unconstrained problem
$$
\min_x f(x) + r(x) + \iota_S(x)
$$
where $\iota_S(x)$ is the indicator function of $S$. Suppose that $x$ is any local minimizer of this problem and thus also a local minimizer of \eqref{prob:nonconvex}.
Since $f$ is differentiable, $r$ and $\iota_S$ are convex, Fermat's rule~\cite{RockafellarWets98} yields
\begin{equation}\label{Fermat-convex-nonsmooth}
0 \in \partial \left( f(x) + r(x) + \iota_S(x) \right) = \nabla f(x) + \partial r(x) + \partial \iota_S(x),
\end{equation}
which further implies that there exists some $z \in \partial r (x)$ such that
$$
(\nabla f(x) + z)^{\top}(y-x)  \geq 0, \quad \forall y\in S.
$$
Using the convexity of $r(\cdot)$, it is equivalent to
\begin{equation}\label{formu:VI-type-KKT}
\nabla f(x)^{\top}(y-x) + r(y) - r(x) \geq 0,\; \forall y\in S.
\end{equation}
Therefore, \eqref{formu:VI-type-KKT} is a necessary condition for local minimum of \eqref{prob:nonconvex} as well.
Furthermore, we claim that $\psi_S(x) \ge -\epsilon$ implies  $\|P_{S}(x,\gamma)\|_2^2 \leq\epsilon/\gamma$ with the prox-function $V(y,x)=\|y-x\|_2^2/2$.
In fact, \eqref{P-S} guarantees that
\begin{equation}\label{formu:general-projection}
\left(\nabla f(x)+\frac{1}{\gamma}(x^+ - x) +z \right)^{\top}(y - x^+) \ge 0, \quad\forall\; y \in S,
\end{equation}
for some $z \in \partial r(x^+)$. By choosing $y = x$ in \eqref{formu:general-projection} one obtains
\be\label{formu:general-projection-1}
\nabla f(x)^{\top}(x - x^+) + r(x) - r(x^+) \ge \left(\nabla f(x) +z \right)^{\top}(x - x^+) \ge \frac{1}{\gamma}\|x^+ - x\|_2^2.
\ee
Therefore, if $\psi_S(x) \geq -\epsilon$, then $\|P_{S}(x,\gamma) \|_2^2 \le \frac{\epsilon}{\gamma} $ holds.

\subsection{The algorithm}\label{Sec:de-alg}

For given point $z$, we define an approximation of the objective function of \eqref{prob:nonconvex} to be:
\be\label{func:linear}
\ell(y;x) :=  f(x) + \nabla f(x)^{\top}(y-x) + r(y),
\ee
which is obtained by linearizing the smooth part (function $f$) of $\Phi$ in \eqref{prob:nonconvex}.
Our GCG method for solving \eqref{prob:nonconvex} is described in Algorithm \ref{alg:gcg}, where $\rho$ and $p$ are from Assumption \ref{assump-p}.
\begin{algorithm}[ht]
\caption{Generalized Conditional Gradient Algorithm (GCG) for solving \eqref{prob:nonconvex}}
\label{alg:gcg}
\begin{algorithmic}[10]
\REQUIRE {Given $x^0 \in S$}
\FOR {$k=0,1,\ldots$}
    \STATE[Step 1] $y^{k}=\arg\min_{y \in S} \ell(y;x^{k})$, and let $d^k = y^k - x^k$;
    \STATE[Step 2] $\alpha_k = \arg\min_{\alpha \in [0,1]}  \alpha\, \nabla f(x^{k})^{\top}d^k + \alpha^p\,\frac{\rho}{2}\|d^k\|^p_p + (1 - \alpha)r(x^k) + \alpha r(y^k)$;
    \STATE[Step 3] Set $x^{k+1}=(1-\alpha_k)x^{k}+ \alpha_k y^{k}$.
\ENDFOR
\end{algorithmic}
\end{algorithm}

{ In each iteration of Algorithm \ref{alg:gcg}, we first perform an exact minimization on the approximated objective function $\ell(y;x)$ to form a direction $d_k$. Then the step size $\alpha_k$ is obtained by an exact line search (which differentiates the GCG from a normal CG method) along the direction $d_k$, where $f$ is approximated by $p$-powered function and the nonsmooth part is replaced by its upper bound. Finally, the iterate is updated by moving along the direction $d_k$ with step size $\alpha_k$.}

{
Note that here we assumed that solving the subproblem in Step 1 of Algorithm \ref{alg:gcg} is relatively easy. That is, we assumed the following assumption. 
\begin{assumption}\label{assump:solve-nonconvex-gcg}
	All subproblems in Step 1 of Algorithm \ref{alg:gcg} can be solved relatively easily.
\end{assumption}

\begin{remark}
Assumption \ref{assump:solve-nonconvex-gcg} is quite common in conditional gradient method. For a list of functions $r$ and sets $S$ such that Assumption \ref{assump:solve-nonconvex-gcg} is satisfied, see \cite{Jaggi-ICML-2013}.
\end{remark}
}

\begin{remark}
It is easy to see that the sequence $\{ \Phi(x^k) \}$ generated by GCG is monotonically nonincreasing~\cite{Bredies09b}, which implies that any cluster point of $\{ x^k \}$ cannot be a strict local maximizer.
\end{remark}

\subsection{An iteration complexity analysis}\label{Sec:iter-analy}
Before we proceed to the main result on iteration complexity of GCG, we need the following lemma that gives a sufficient condition for an $\epsilon$-stationary solution for \eqref{prob:nonconvex}. This lemma is inspired by \cite{GeHeHe14}, and it indicates that if the progress gained by minimizing \eqref{func:linear} is small, then $z$ must already be close to a stationary solution for \eqref{prob:nonconvex}.

\begin{lemma}\label{kkt-criteria}
Define
{$z_{\ell} := \argmin_{x\in S} \ \ell(x;z)$.}
The improvement of the linearization at point $z$ is defined as
\[\triangle \ell_z :=  \ell(z;z) - \ell({z}_{\ell};z)=- \nabla f(z)^{\top}({z}_{\ell}-z) + r(z) - r(z_\ell).\]
Given $\epsilon \geq 0$, for any $z \in S$, if $\triangle \ell_z \le \epsilon$, then $z$ is an $\epsilon$-stationary solution for \eqref{prob:nonconvex} as defined in Definition \ref{def:eps-nonconvex}.
\end{lemma}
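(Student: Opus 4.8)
The plan is to show that the linearization improvement $\triangle \ell_z$ and the stationarity measure $\psi_S(z)$ of Definition \ref{def:eps-nonconvex} are literally the same quantity up to sign, namely $\psi_S(z) = -\triangle \ell_z$; the claimed implication then follows at once. The whole argument is an exact identification, so the only real work is bookkeeping of the constant terms.

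First I would unwind the definition of $\ell(\cdot\,;z)$ from \eqref{func:linear}. Since $\ell(y;z) = f(z) + \nabla f(z)^{\top}(y-z) + r(y)$ and the term $f(z)$ is a constant independent of $y$, minimizing $\ell(y;z)$ over $y\in S$ is equivalent to minimizing $\nabla f(z)^{\top}(y-z) + r(y)$ over $y\in S$. Hence the defining point $z_\ell = \argmin_{y\in S}\ell(y;z)$ is precisely the minimizer attaining the infimum in $\psi_S(z)$, because that infimum differs from $\inf_{y\in S}\ell(y;z)$ only by the additive constants $f(z)$ and $r(z)$, neither of which affects the argmin.

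Next I would evaluate both expressions at $y=z_\ell$. Substituting the optimal $y=z_\ell$ into the bracket defining $\psi_S$ gives $\psi_S(z) = \nabla f(z)^{\top}(z_\ell - z) + r(z_\ell) - r(z)$, while the stated expansion of $\triangle \ell_z$ reads $\triangle \ell_z = -\nabla f(z)^{\top}(z_\ell - z) + r(z) - r(z_\ell)$. Comparing term by term yields $\psi_S(z) = -\triangle \ell_z$. Consequently the hypothesis $\triangle \ell_z \le \epsilon$ gives $\psi_S(z) = -\triangle \ell_z \ge -\epsilon$, which is exactly condition \eqref{formu:epsilon-KKT} certifying that $z$ is an $\epsilon$-stationary solution. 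I do not expect a genuine obstacle: the substantive step is simply recognizing that $z_\ell$ is the minimizer appearing in $\psi_S$, and the only point requiring care is carrying the constants $f(z)$ and $r(z)$ correctly so that the relation $\psi_S(z)=-\triangle \ell_z$ comes out as an exact equality rather than a one-sided bound.
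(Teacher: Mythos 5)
Your proposal is correct and follows essentially the same route as the paper: both arguments rest on the observation that $z_\ell$ minimizes $\nabla f(z)^{\top}(y-z)+r(y)$ over $S$ (the constants $f(z)$ and $r(z)$ being irrelevant to the argmin). The only cosmetic difference is that you record the exact identity $\psi_S(z)=-\triangle \ell_z$, while the paper derives only the one-sided bound $\psi_S(z)\ge -\triangle \ell_z$, which is all the lemma needs.
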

\begin{proof} {From the definition of ${z}_{\ell}$}, we have
\[
\ell(y;z) - \ell({z}_{\ell};z) = \nabla f(z)^{\top}(y-{z}_{\ell}) + r(y) - r(z_\ell) \geq 0, \forall y \in S,
\]
which implies that
\begin{eqnarray*}
&  & \nabla f(z)^{\top}(y-z) + r(y) - r(z)\\
&= & \nabla f(z)^{\top}(y-{z}_{\ell}) +  r(y) - r(z_\ell) + \nabla f(z)^{\top}({z}_{\ell}-z) + r(z_\ell) - r(z) \\
&\geq & \nabla f(z)^{\top}({z}_{\ell}-z)+ r(z_\ell) - r(z), \forall y \in S.
\end{eqnarray*}
It then follows immediately that if $\triangle \ell_z \leq \epsilon$, then
$\nabla f(z)^{\top}(y-z)+ r(y) - r(z) \geq - \triangle \ell_z \geq - \epsilon.$
\end{proof}


Denoting $\Phi^*$ to be the optimal value of \eqref{prob:nonconvex}, we are now ready to give the main result of the iteration complexity of GCG (Algorithm \ref{alg:gcg}) for obtaining an $\epsilon$-stationary solution for \eqref{prob:nonconvex}.

\begin{theorem}\label{theorm:CG}
For any $\epsilon \in (0, \diam^p_p(S) \rho) $, GCG finds an $\epsilon$-stationary solution for \eqref{prob:nonconvex} within $\left\lceil \frac{2(\Phi(x^{0}) - \Phi^*)(\diam^p_p(S)\rho)^{q-1}}{\epsilon^q} \right\rceil$ iterations, where $\frac{1}{p} + \frac{1}{q} =1$.
\end{theorem}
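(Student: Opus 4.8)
The plan is to turn the exact line search in Step~2 of Algorithm~\ref{alg:gcg} into a guaranteed per-iteration decrease of $\Phi$, and then close the argument by telescoping and invoking Lemma~\ref{kkt-criteria}. Write $a_k := \triangle \ell_{x^k} = -\nabla f(x^k)^\top d^k + r(x^k) - r(y^k) \ge 0$; this is exactly the quantity $\triangle \ell_z$ of Lemma~\ref{kkt-criteria} with $z = x^k$ and $z_\ell = y^k$, and it is nonnegative because $y^k$ minimizes $\ell(\cdot;x^k)$ over $S$. The target is a bound of the form $\Phi(x^k) - \Phi(x^{k+1}) > \frac{\epsilon^q}{2(\diam_p^p(S)\rho)^{q-1}}$ that holds at every iteration for which $a_k > \epsilon$.

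First I would derive the descent inequality. Applying the H\"older condition \eqref{ineq:p-power} with $x = x^k$ and $y = x^{k+1} = x^k + \alpha_k d^k$ bounds $f(x^{k+1})$, while convexity of $r$ together with $x^{k+1} = (1-\alpha_k)x^k + \alpha_k y^k$ gives $r(x^{k+1}) \le (1-\alpha_k) r(x^k) + \alpha_k r(y^k)$. Adding these, the right-hand side equals $f(x^k)$ plus the Step~2 objective evaluated at $\alpha_k$. Since $\alpha_k$ minimizes that objective over $[0,1]$, I may replace $\alpha_k$ by an arbitrary $\alpha \in [0,1]$; after subtracting $\Phi(x^k) = f(x^k) + r(x^k)$ and recognizing the coefficient of $\alpha$ as $-a_k$, this produces, for every $\alpha \in [0,1]$,
\[
\Phi(x^{k+1}) - \Phi(x^k) \;\le\; -\alpha\, a_k + \tfrac{\rho}{2}\,\alpha^p \|d^k\|_p^p \;\le\; -\alpha\, a_k + \tfrac{\rho}{2}\,\alpha^p \diam_p^p(S),
\]
where the last inequality uses $\|d^k\|_p \le \diam_p(S)$ since $x^k, y^k \in S$.

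The decisive step is the choice of the \emph{feasible} stepsize $\bar\alpha := \bigl(\epsilon/(\rho\,\diam_p^p(S))\bigr)^{q-1}$, with $q-1 = 1/(p-1)$; it lies in $[0,1]$ precisely because the hypothesis imposes $\epsilon < \diam_p^p(S)\rho$. Substituting $\bar\alpha$ and using $\bar\alpha^{p-1} = \epsilon/(\rho\,\diam_p^p(S))$ collapses the $p$-power term, yielding $\Phi(x^{k+1}) - \Phi(x^k) \le \bar\alpha\bigl(\tfrac{\epsilon}{2} - a_k\bigr)$. Hence whenever $a_k > \epsilon$ the decrease strictly exceeds $\tfrac{\epsilon}{2}\bar\alpha = \frac{\epsilon^q}{2(\diam_p^p(S)\rho)^{q-1}}$. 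Finally I would run a contradiction/telescoping argument: if $a_k > \epsilon$ for all $k = 0,\dots,K-1$, summing the decreases and using $\Phi(x^K) \ge \Phi^*$ gives $\Phi(x^0) - \Phi^* > K\cdot \frac{\epsilon^q}{2(\diam_p^p(S)\rho)^{q-1}}$, so $K < \frac{2(\Phi(x^0)-\Phi^*)(\diam_p^p(S)\rho)^{q-1}}{\epsilon^q}$; thus within the stated ceiling of iterations some $a_k \le \epsilon$ must occur, and Lemma~\ref{kkt-criteria} certifies that this $x^k$ is an $\epsilon$-stationary solution.

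I expect the main obstacle to be the third step. The exact line search must be exploited only through the ``minimizer dominates any feasible $\alpha$'' inequality rather than by solving for $\alpha_k$ explicitly, and the surrogate stepsize $\bar\alpha$ must be engineered so that it is \emph{automatically admissible} under the assumption $\epsilon < \diam_p^p(S)\rho$ while simultaneously reproducing the exact constant and the conjugate exponent $q$. The cleanliness of the bound hinges on bounding $\|d^k\|_p^p$ by $\diam_p^p(S)$ \emph{before} optimizing over $\alpha$, which is what lets this particular $\bar\alpha$ land in $[0,1]$ and thereby avoids a case split on whether the unconstrained minimizer of the surrogate exceeds $1$.
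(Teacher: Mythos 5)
Your proposal is correct and follows essentially the same route as the paper's proof: both exploit the exact line search only through domination of the surrogate feasible step $\bar\alpha=\left(\epsilon/(\rho\,\diam_p^p(S))\right)^{q-1}\in[0,1]$, combine the H\"older condition with convexity of $r$ and the bound $\|d^k\|_p\le\diam_p(S)$ to obtain the per-iteration inequality $\Phi(x^{k+1})-\Phi(x^k)\le\bar\alpha\left(\tfrac{\epsilon}{2}-\triangle\ell^k\right)$, and then telescope and invoke Lemma~\ref{kkt-criteria}. The only cosmetic difference is that the paper averages the summed inequality to bound $\min_k\triangle\ell^k\le\epsilon$ directly, whereas you phrase the final step as a contradiction; these are equivalent.
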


\begin{proof}
For ease of presentation, we denote $D:= \diam_p(S)$ and $\triangle \ell^k  := \triangle \ell_{x^k}$.
By Assumption \ref{assump-p}, using the fact that $\frac{\epsilon}{D^p\rho } < 1$, and by the definition of $\alpha_k$ in Algorithm \ref{alg:gcg}, we have
\begin{eqnarray} \label{gcg-proof-1}
 &&({\epsilon}/{(D^p\rho)})^{\frac{1}{p-1}}\triangle \ell^k - \frac{1}{2\rho^{1/(p-1)}} ({\epsilon}/{D})^{\frac{p}{p-1}}  \\
 &\leq & -({\epsilon}/{(D^p\rho)})^{\frac{1}{p-1}} ( \nabla f(x^{k})^{\top}(y^{k}-x^{k})+r(y^{k}) - r(x^{k}) ) \nonumber \\
 && - \frac{\rho}{2} ( {\epsilon}/{(D^p\rho)})^{\frac{p}{p-1}} \|y^{k} - x^{k}\|^p_p \nonumber \\
 &\leq & -\alpha_k \left( \nabla f(x^{k})^{\top}(y^{k}-x^{k})+r(y^{k}) - r(x^{k}) \right) - \frac{\rho \alpha_k^p}{2}\|y^{k} - x^{k}\|^p_p \nonumber  \\
 &\leq & -\nabla f(x^{k})^{\top}(x^{k+1}-x^{k}) + r(x^{k})-r(x^{k+1}) - \frac{\rho}{2}\|x^{k+1} - x^{k}\|^p_p \nonumber \\
 &\leq & f(x^{k}) - f(x^{k+1}) + r(x^{k})-r(x^{k+1}) = \Phi(x^{k}) - \Phi(x^{k+1}), \nonumber
\end{eqnarray}
where the third inequality is due to the convexity of function $r$ and the fact that $x^{k+1} - x^{k} = \alpha_k(y^{k}-x^{k})$, and the last inequality is due to \eqref{ineq:p-power}. 
Furthermore, \eqref{gcg-proof-1} immediately yields
\be\label{gcg-thm-proof-1}
\triangle \ell^k \le  ( {\epsilon}/{(D^p\rho)})^{-\frac{1}{p-1}} (\Phi(x^{k}) - \Phi(x^{k+1}) ) + \frac{\epsilon}{2}.
\ee
For any integer $K>0$, summing \eqref{gcg-thm-proof-1} over $k=0,1,\ldots,K-1$, yields
\begin{align*}
K \min_{k\in\{0,1,\ldots,K-1\}}\triangle \ell^k & \leq   \sum_{k=0}^{K-1}\triangle \ell^k  \leq ( {\epsilon}/{(D^p\rho)})^{-\frac{1}{p-1}} \left(\Phi(x^{0}) - \Phi(x^{K})\right)+ \frac{\epsilon}{2}K \\
& \leq  ( {\epsilon}/{(D^p\rho)})^{-\frac{1}{p-1}} (\Phi(x^{0}) - \Phi^* )+ \frac{\epsilon}{2}K,
\end{align*}
where $\Phi^*$ is the optimal value of \eqref{prob:nonconvex}.
It is easy to see that by setting $K = \left\lceil \frac{2(\Phi(x^{0}) - \Phi^*)(D^p\rho)^{q-1}}{\epsilon^q} \right\rceil$, the above inequality implies $\triangle \ell_{x^{k^*}} \leq \epsilon$, where $k^*\in \argmin_{k\in\{0,\ldots,K-1\}}\triangle\ell^k$. According to Lemma \ref{kkt-criteria}, $x^{k^*}$ is an $\epsilon$-stationary solution for \eqref{prob:nonconvex} as defined in Definition \ref{def:eps-nonconvex}.
\end{proof}

Finally, if $f$ is concave, then the iteration complexity can be improved as $O(1/\epsilon)$.
\begin{proposition}\label{coro:concave}
Suppose that $f$ is a concave function. If we set $\alpha_k = 1$ for all $k$ in GCG (Algorithm \ref{alg:gcg}), then it returns an $\epsilon$-stationary solution for \eqref{prob:nonconvex} within $\left\lceil \frac{\Phi(x^{0}) - \Phi^*}{\epsilon} \right\rceil$ iterations.
\end{proposition}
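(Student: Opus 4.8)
The plan is to exploit the fact that the choice $\alpha_k=1$ trivializes both the line search (Step 2) and the update (Step 3): Step 3 becomes $x^{k+1}=(1-1)x^k+1\cdot y^k=y^k$, so the iterate simply jumps to the linearized minimizer $y^k=\arg\min_{y\in S}\ell(y;x^k)$. In the notation of Lemma \ref{kkt-criteria}, this means $x^{k+1}=(x^k)_\ell$ and hence the per-iteration improvement of the linearization is exactly $\triangle\ell^k=\ell(x^k;x^k)-\ell(x^{k+1};x^k)=-\nabla f(x^k)^\top(x^{k+1}-x^k)+r(x^k)-r(x^{k+1})$. The whole argument then reduces to showing that this quantity is dominated by the true objective decrease $\Phi(x^k)-\Phi(x^{k+1})$, after which a telescoping sum finishes the proof exactly as in Theorem \ref{theorm:CG}, but now without the slack introduced by the H\"older term.

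The key analytic step is to replace the H\"older inequality \eqref{ineq:p-power} used in Theorem \ref{theorm:CG} by the (sharper, for this purpose) consequence of concavity. Since $f$ is concave and differentiable, its graph lies below each tangent plane, giving $f(x^{k+1})\le f(x^k)+\nabla f(x^k)^\top(x^{k+1}-x^k)$ for every $k$. Adding $r(x^k)-r(x^{k+1})$ to both sides and rearranging, I would obtain
\[
\Phi(x^k)-\Phi(x^{k+1})\;\ge\;-\nabla f(x^k)^\top(x^{k+1}-x^k)+r(x^k)-r(x^{k+1})\;=\;\triangle\ell^k .
\]
Note this is precisely where the improved rate comes from: concavity yields a clean linear upper bound on $f(x^{k+1})$ with no quadratic (or $p$-powered) remainder, so the additive $\tfrac{\epsilon}{2}$ term and the factor $(D^p\rho)^{q-1}$ appearing in Theorem \ref{theorm:CG} both disappear.

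With the bound $\triangle\ell^k\le\Phi(x^k)-\Phi(x^{k+1})$ in hand, I would sum over $k=0,1,\ldots,K-1$ and use that the sum telescopes together with $\Phi(x^K)\ge\Phi^*$, to get $K\min_{0\le k\le K-1}\triangle\ell^k\le\sum_{k=0}^{K-1}\triangle\ell^k\le\Phi(x^0)-\Phi^*$. Hence $\min_{k}\triangle\ell^k\le(\Phi(x^0)-\Phi^*)/K$, and choosing $K=\left\lceil(\Phi(x^0)-\Phi^*)/\epsilon\right\rceil$ forces $\triangle\ell_{x^{k^*}}\le\epsilon$ for some $k^*\in\{0,\ldots,K-1\}$. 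Lemma \ref{kkt-criteria} then certifies that $x^{k^*}$ is an $\epsilon$-stationary solution for \eqref{prob:nonconvex}, completing the argument.

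There is no genuine obstacle here; this is an easy but instructive corollary, and the only point requiring care is orienting the concavity inequality correctly (the tangent plane lies \emph{above} the graph, so it upper-bounds $f(x^{k+1})$) and confirming that the degenerate choice $\alpha_k=1$ is admissible in Step 2 so that $x^{k+1}=y^k$ indeed holds.
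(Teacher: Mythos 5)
Your proposal is correct and follows essentially the same route as the paper's own proof: set $\alpha_k=1$ so that $x^{k+1}=y^k$, use concavity of $f$ to bound $\triangle\ell^k\le\Phi(x^k)-\Phi(x^{k+1})$, telescope, and invoke Lemma \ref{kkt-criteria}. The only difference is that you spell out the tangent-plane justification of the concavity inequality, which the paper leaves implicit.
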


\begin{proof}
By setting $\alpha_k = 1$ in Algorithm \ref{alg:gcg} we have $x^{k+1}=y^k$ for all $k$. Since $f$ is concave, it holds that
\[\triangle \ell^k = -\nabla f(x^{k})^{\top}(x^{k+1}-x^{k})+ r(x^{k})-r(x^{k+1})\leq \Phi(x^{k}) - \Phi(x^{k+1}).\]
Summing this inequality over $k=0,1,\ldots,K-1$ yields
$K \min_{k\in\{0,1,\ldots,K-1\}}\triangle \ell^k  \leq \Phi(x^{0}) - \Phi^*,$
which leads to the desired result immediately.
\end{proof}

\section{Variants of ADMM for solving nonconvex problems with affine constraints}\label{sec:admm}

In this section, we study two variants of the ADMM (Alternating Direction Method of Multipliers) for solving the general problem \eqref{noncvx-block-opt}, and analyze their iteration complexities for obtaining an $\epsilon$-stationary solution (to be defined later) under certain conditions. Throughout this section, the following two assumptions regarding problem \eqref{noncvx-block-opt} are assumed.
\begin{assumption}\label{assump:Lipschitz}
{
The gradient of the function $f$ is Lipschitz continuous with Lipschitz constant $L>0$, i.e., for any $(x_1^1,\cdots,x_N^1)$ and $(x_1^2,\cdots,x_N^2)\in\XCal_1\times\cdots\times\XCal_{N-1}\times\RR^{n_N}$, it holds that
\be\label{f-Lipschitz}
\left\| \nabla f(x_1^1,x_2^1,\cdots, x_N^1) - \nabla f(x_1^2,x_2^2,\cdots, x_N^2) \right\| \leq L\left\| \left( x_1^1 - x_1^2, x_2^1 - x_2^2, \cdots, x_N^1 - x_N^2\right)\right\|,
\ee
}
which implies that for any $(x_1,\cdots,x_{N-1})\in\XCal_1\times\cdots\times\XCal_{N-1}$ and $x_N$, $\hat{x}_N\in\RR^{n_N}$, we have
\be\label{decent-lemma}
f(x_1,\cdots,x_{N-1}, x_N)  \le f(x_1,\cdots,x_{N-1}, \hat{x}_N) +  (x_N - \hat{x}_N)^{\top}\nabla_N f(x_1,\cdots,x_{N-1}, \hat{x}_N) + \frac{L}{2}\|x_N - \hat{x}_N\|^2.
\ee
\end{assumption}

\begin{assumption}\label{value-lower-bound}
$f$ and $r_i, i=1,\ldots,N-1$ are all lower bounded over the appropriate domains defined via the sets $\XCal_1, \XCal_2, \cdots, \XCal_{N-1}, \RR^{n_N}$, and we denote
$$f^* = \inf\limits_{x_i\in\XCal_i,i=1,\ldots,N-1; x_N\in\RR^{n_N}} \ \{f(x_1,x_2,\cdots,x_N)\}$$
and $r_i^* = \inf\limits_{x_i\in\XCal_i} \ \{r_i(x_i)\}$ for $i=1,2,\ldots,N-1$.
\end{assumption}

\subsection{Preliminaries}

To characterize the optimality conditions for \eqref{noncvx-block-opt} when $r_i$ is nonsmooth and nonconvex, we need to recall the notion of the generalized gradient (see, e.g., \cite{RockafellarWets98}).
\begin{definition}\label{Def:general-subgradient}
Let $h:\RR^{n} \to \RR \cup \{+\infty\}$ be a proper lower semi-continuous function. Suppose $h(\bar{x})$ is finite for a given $\bar{x}$. For $v \in \RR^n$, we say that \\
(i). $v$ is a regular subgradient (also called Fr$\acute{e}$chet subdifferential) of $h$ at $\bar{x}$, written $v \in \hat{\partial}h(\bar{x})$, if
\[\lim_{x \neq \bar{x}}\inf_{x \to \bar{x}}\frac{h(x) - h(\bar{x}) - \langle v, x - \bar{x} \rangle}{\|x - \bar{x}\|} \geq 0;\]
(ii). $v$ is a general subgradient of $h$ at $\bar{x}$, written $v \in \partial h(\bar{x})$, if there exist sequences $\{x^k\}$ and $\{v^k\}$ such that $x^{k} \to \bar{x}$ with $h(x^{k}) \to h(\bar{x})$, and $v^k \in \hat{\partial}h(x^k)$ with $v^k \to v$ when $k \to \infty$.
\end{definition}

The following proposition lists some well-known facts about the lower semi-continuous functions.

\begin{proposition}\label{nonsmooth-property} Let $h:\RR^{n} \to \RR \cup \{+\infty\}$ and $g:\RR^{n} \to \RR \cup \{+\infty\}$ be proper lower semi-continuous functions. Then it holds that:\\
(i) (Theorem 10.1 in \cite{RockafellarWets98}) Fermat's rule remains true: if $\bar{x}$ is a local minimum of $h$, then $0 \in \partial h(\bar{x})$.\\
(ii) If $h(\cdot)$ is continuously differentiable at $x$, then $\partial (h + g) (x) = \nabla h(x) + \partial g(x)$.\\
(iii) (Exercise 10.10 in \cite{RockafellarWets98}) If $h$ is locally Lipschitz continuous at ${x}$, then $\partial (h + g) (x) \subset \partial h(x) + \partial g(x)$. \\
(iv) Suppose $h(x)$ is locally Lipschitz continuous, $X$ is a closed and convex set, and
$\bar{x}$ is a local minimum of $h$ on X. Then there exists $v \in \partial h(\bar{x})$ such that
$(x - \bar{x})^{\top} v \geq 0,\forall x\in X$.
\end{proposition}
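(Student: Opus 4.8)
The plan is to treat parts (i) and (iii) as given, since they are quoted verbatim from Rockafellar and Wets \cite{RockafellarWets98}, and to derive the substantive claim (iv) from them together with the standard identification of the indicator subdifferential with the normal cone. Part (ii) is the smooth sum rule for general subgradients: when $h$ is continuously differentiable at $x$, I would argue directly from Definition \ref{Def:general-subgradient} that $v \in \hat{\partial}(h+g)(x)$ if and only if $v - \nabla h(x) \in \hat{\partial}g(x)$, because the difference quotient defining the regular subgradient of $h+g$ differs from that of $g$ only by a term $\left(h(x') - h(x) - \nabla h(x)^{\top}(x'-x)\right)/\|x'-x\|$ that vanishes in the limit by differentiability; passing to limits over the approximating sequences then promotes this equivalence to the general subgradient, yielding $\partial(h+g)(x) = \nabla h(x) + \partial g(x)$.

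The main content is (iv). First I would reformulate the constrained problem as an unconstrained one by adding the indicator function $\iota_X$ of $X$: since $\bar{x}$ is a local minimum of $h$ on the closed convex set $X$, it is an unconstrained local minimum of $h + \iota_X$. Because $X$ is closed, $\iota_X$ is proper and lower semi-continuous, so Fermat's rule from part (i) applies and gives $0 \in \partial(h+\iota_X)(\bar{x})$. Next I would invoke the Lipschitz sum rule (iii): since $h$ is locally Lipschitz at $\bar{x}$, we have $\partial(h+\iota_X)(\bar{x}) \subset \partial h(\bar{x}) + \partial \iota_X(\bar{x})$, and therefore $0 \in \partial h(\bar{x}) + \partial \iota_X(\bar{x})$. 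Extracting witnesses for this membership produces $v \in \partial h(\bar{x})$ and $w \in \partial \iota_X(\bar{x})$ with $v + w = 0$.

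The final ingredient is that for a convex set the general subgradient of the indicator coincides with the normal cone, $\partial \iota_X(\bar{x}) = N_X(\bar{x}) = \{w : w^{\top}(x-\bar{x}) \le 0, \ \forall x \in X\}$. Hence $-v = w \in N_X(\bar{x})$, which unpacks to $v^{\top}(x-\bar{x}) \ge 0$ for all $x \in X$, exactly the asserted variational inequality. I expect the only point requiring care to be the bookkeeping in the sum rule: part (iii) supplies an \emph{inclusion}, not an equality, so the argument must extract a single $v \in \partial h(\bar{x})$ from the membership $0 \in \partial h(\bar{x}) + \partial \iota_X(\bar{x})$ rather than manipulate entire subdifferentials. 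The identification $\partial \iota_X(\bar{x}) = N_X(\bar{x})$ and the normal-cone description both rely only on convexity of $X$ and are standard, so no genuine obstacle arises there.
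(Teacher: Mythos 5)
The paper states this proposition as a collection of known facts and gives no proof at all---(i) and (iii) carry citations to Rockafellar--Wets, and (ii) and (iv) are left unjustified---so there is no "paper's proof" to compare against; your job here is really to certify that the unproved items do follow from the cited ones, and your argument does exactly that, correctly. The derivation of (iv) is the standard and intended one: $\bar{x}$ is an unconstrained local minimizer of $h+\iota_X$, part (i) gives $0\in\partial(h+\iota_X)(\bar{x})$, part (iii) (applicable because $X$ closed and nonempty makes $\iota_X$ proper and lower semi-continuous, and $h$ is locally Lipschitz) gives $0\in\partial h(\bar{x})+\partial\iota_X(\bar{x})$, and the identification $\partial\iota_X(\bar{x})=N_X(\bar{x})$ for a closed convex set yields the variational inequality; your remark that (iii) is only an inclusion and that one must extract a single witness $v$ is exactly the right bookkeeping. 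The only point I would tighten is in (ii): the equality $\hat{\partial}(h+g)(x')=\nabla h(x')+\hat{\partial}g(x')$ must be invoked at the approximating points $x'=x^k\to x$, not just at $x$ itself, so the limit passage needs $h$ differentiable near $x$ with $\nabla h(x^k)\to\nabla h(x)$ --- this is precisely what "continuously differentiable at $x$" supplies, together with the observation that $(h+g)(x^k)\to(h+g)(x)$ forces $g(x^k)\to g(x)$ so that the limiting subgradient of $g$ is correctly attained. With that one sentence added, the proof is complete.
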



In our analysis, we frequently use the following identity that holds for any vectors $a,b,c,d$,
\be\label{triangle-identity}
(a-b)^\top(c-d) = \frac{1}{2}\left(\|a-d\|_2^2-\|a-c\|_2^2+\|b-c\|_2^2-\|b-d\|_2^2\right).
\ee

\subsection{ An $\epsilon$-stationary solution for problem \eqref{noncvx-block-opt}}

We now introduce notions of $\epsilon$-stationarity for \eqref{noncvx-block-opt} under the following two settings:
(i) {\bf Setting 1}: $r_i$ is Lipschitz continuous, and $\XCal_i$ is a compact set, for $i=1,\ldots,N-1$; (ii) {\bf Setting 2}: $r_i$ is lower semi-continuous, and $\XCal_i = \RR^{n_i}$, for $i=1,\ldots,N-1$.

\begin{definition}[$\epsilon$-stationary solution for \eqref{noncvx-block-opt} in Setting 1]\label{r-nonconvex-constrained}
Under the conditions in {\bf Setting~1}, for $\epsilon\geq 0$, we call ${\left(x_1^*,\cdots, x_N^* \right)}$ an $\epsilon$-stationary solution for \eqref{noncvx-block-opt} {if there exists a Lagrange multiplier $\lambda^*$ such that
the following holds for any $\left(x_1,\cdots,x_N\right)\in\XCal_1\times\cdots\times\XCal_{N-1}\times\RR^{n_N}$}:
\begin{eqnarray}
\left(x_i-{x}^*_i\right)^\top\left[ {g}^*_i + \nabla_i f(x_1^*,\cdots, x^*_N)  - A_i^\top {\lambda}^*\right]  &\geq& -\epsilon,  \quad i=1,\ldots,N-1, \label{S2-Prob-Opt-1}\\
\left\| \nabla_N f(x_1^*,\ldots, x_{N-1}^*, x_N^*) - A_{N}^\top \lambda^* \right\|& \le & \epsilon,  \label{S1-Prob-Opt-2}\\
\left\| \sum_{i=1}^{N} A_i x_i^* - b \right\|& \le & \epsilon, \label{S1-Prob-Opt-3}
\end{eqnarray}
where ${g}^*_i$ is a general subgradient of $r_i$ at point $x_i^*$.
If $\epsilon=0$, we call ${ \left(x_1^*,\cdots, x_N^* \right) }$ a stationary solution for \eqref{noncvx-block-opt}.
\end{definition}


If $\XCal_i = \RR^{n_i}$ for $i=1,\ldots, N-1$, then the VI style conditions in Definition \ref{r-nonconvex-constrained} reduce to the following.
\begin{definition}[$\epsilon$-stationary solution for \eqref{noncvx-block-opt} in Setting 2]\label{r-nonconvex-unconstrained}
Under the conditions in {\bf Setting~2}, for $\epsilon\geq 0$, we call ${ \left(x_1^*,\ldots, x_N^*\right) }$ to be an $\epsilon$-stationary solution for \eqref{noncvx-block-opt} {if there exists a Lagrange multiplier $\lambda^*$ such that \eqref{S1-Prob-Opt-2}, \eqref{S1-Prob-Opt-3} and the following holds for any $\left(x_1,\cdots,x_N\right)\in\XCal_1\times\cdots\times\XCal_{N-1}\times\RR^{n_N}$}:
\be\label{S3-Prob-Opt-1}
\dist\left(-\nabla_i f(x_1^*,\cdots, x^*_N)  + A_i^\top\lambda^*, {\partial} r_i(x_i^*)\right) \leq \epsilon, \ i=1,\ldots,N-1,
\ee
where ${\partial} r_i(x_i^*)$ is the general subgradient of $r_i$ at $x_i^*$, $i=1,2,\ldots,N-1$. If $\epsilon=0$, we call ${ \left(x_1^*,\cdots, x_N^*\right) }$ to be a stationary solution for \eqref{noncvx-block-opt}.
\end{definition}

The two settings of problem \eqref{noncvx-block-opt} considered in this section and their corresponding definitions of $\epsilon$-stationary solution, are summarized in Table \ref{table:epsilon-stationary}.

\begin{table}[htbp]
\center\vspace{-1em}
\caption{$\epsilon$-stationary solution of \eqref{noncvx-block-opt} in two settings}
\begin{tabular}{|l|c|c|c|c|c|c|c|} \hline
 & $r_i$, $i=1,\ldots, N-1$  & $\XCal_i$, $i=1,\ldots, N-1$ &  $\epsilon$-stationary solution \\ \hline
 {\bf Setting $1$}   &  Lipschitz continuous & $\XCal_i \subset \RR^{n_i}$ compact & Definition~\ref{r-nonconvex-constrained}  \\ \hline
 {\bf Setting $2$} & lower semi-continuous & $\XCal_i = \RR^{n_i}$ & Definition~\ref{r-nonconvex-unconstrained} \\ \hline
\end{tabular}\label{table:epsilon-stationary}
\end{table}


A very recent work of Hong \cite{Hong16} proposes a definition of an $\epsilon$-stationary solution for problem \eqref{prob:nonconvex-linear-constraint}, and analyzes the iteration complexity of a proximal augmented Lagrangian method for obtaining such a solution.
Specifically, $(x^*,\lambda^*)$ is called an $\epsilon$-stationary solution for \eqref{prob:nonconvex-linear-constraint} in \cite{Hong16} if $Q(x^*,\lambda^*) \le \epsilon$, where
\[Q(x,\lambda) := \| \nabla_x\LCal_\beta (x, \lambda) \|^2 + \| Ax - b\|^2,\]
and $\LCal_\beta(x,\lambda) := f(x) - \lambda^\top \left( Ax - b\right) + \frac{\beta}{2}\left\| Ax - b\right\|^2$ is the augmented Lagrangian function of \eqref{prob:nonconvex-linear-constraint}. Note that \cite{Hong16} assumes that $f$ is differentiable and has bounded gradient in \eqref{prob:nonconvex-linear-constraint}.
It is easy to show that an $\epsilon$-stationary solution in \cite{Hong16} is equivalent to an $O(\sqrt{\epsilon})$-stationary solution for \eqref{noncvx-block-opt} according to Definition \ref{r-nonconvex-unconstrained} with $r_i=0$ and $f$ being differentiable. Note that there is no set constraint in \eqref{prob:nonconvex-linear-constraint}, and so the notion of the $\epsilon$-stationarity in \cite{Hong16} is not applicable in the case of Definition \ref{r-nonconvex-constrained}.

\begin{proposition}\label{prop:relation-to-Hong}
Consider the $\epsilon$-stationary solution in Definition \ref{r-nonconvex-unconstrained} applied to problem~\eqref{prob:nonconvex-linear-constraint}, i.e., one block variable and $r_i(x)=0$. Then $x^*$ is a $\gamma_1\sqrt{{\epsilon}}$-stationary solution in Definition \ref{r-nonconvex-unconstrained}, with { Lagrange multiplier $\lambda^*$ and }$\gamma_1 = 1/(\sqrt{2\beta^2\|A\|_2^2+3})$, implies $Q(x^*,\lambda^*) \le \epsilon$. On the contrary, if $Q(x^*,\lambda^*) \le \epsilon$, then $x^*$ is a $\gamma_2\sqrt{{\epsilon}}$-stationary solution from Definition \ref{r-nonconvex-unconstrained} {with Lagrange multiplier $\lambda^*$}, where $\gamma_2 = \sqrt{2 (1 + \beta^2 \|A\|_2^2)}$.
\end{proposition}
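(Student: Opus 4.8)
The plan is to exploit the single explicit relation between the two stationarity gauges: the gradient of the augmented Lagrangian differs from the quantity $\nabla f(x^*) - A^\top\lambda^*$ appearing in Definition~\ref{r-nonconvex-unconstrained} only by the penalty term $\beta A^\top(Ax^* - b)$. First I would record that, for the single-block instance \eqref{prob:nonconvex-linear-constraint} with $r_i\equiv 0$, there are no indices $i\in\{1,\ldots,N-1\}$, so the subgradient conditions \eqref{S3-Prob-Opt-1} are vacuous and Definition~\ref{r-nonconvex-unconstrained} collapses to exactly the two inequalities $\|\nabla f(x^*) - A^\top\lambda^*\| \le \epsilon$ (from \eqref{S1-Prob-Opt-2}) and $\|Ax^* - b\| \le \epsilon$ (from \eqref{S1-Prob-Opt-3}). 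I would also write out the identity $\nabla_x\LCal_\beta(x^*,\lambda^*) = \nabla f(x^*) - A^\top\lambda^* + \beta A^\top(Ax^* - b)$, which drives both directions, and recall $\|A^\top u\|\le\|A\|_2\|u\|$.

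For the forward implication, assume $\|\nabla f(x^*) - A^\top\lambda^*\| \le \gamma_1\sqrt{\epsilon}$ and $\|Ax^* - b\| \le \gamma_1\sqrt{\epsilon}$. Applying the elementary bound $\|u+v\|^2 \le 2\|u\|^2 + 2\|v\|^2$ to the displayed identity, together with the spectral-norm inequality, bounds $\|\nabla_x\LCal_\beta(x^*,\lambda^*)\|^2$ by $2\gamma_1^2\epsilon + 2\beta^2\|A\|_2^2\gamma_1^2\epsilon$. Adding $\|Ax^* - b\|^2 \le \gamma_1^2\epsilon$ then gives $Q(x^*,\lambda^*) \le \gamma_1^2(3 + 2\beta^2\|A\|_2^2)\epsilon$, and the prescribed value $\gamma_1 = 1/\sqrt{2\beta^2\|A\|_2^2 + 3}$ is precisely the one making $\gamma_1^2(3 + 2\beta^2\|A\|_2^2) = 1$, hence $Q(x^*,\lambda^*)\le\epsilon$.

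For the converse, $Q(x^*,\lambda^*) \le \epsilon$ immediately yields both $\|\nabla_x\LCal_\beta(x^*,\lambda^*)\| \le \sqrt{\epsilon}$ and $\|Ax^* - b\| \le \sqrt{\epsilon}$; the latter already delivers \eqref{S1-Prob-Opt-3} at tolerance $\sqrt{\epsilon} \le \gamma_2\sqrt{\epsilon}$ since $\gamma_2\ge 1$. To recover \eqref{S1-Prob-Opt-2}, I would rearrange the identity as $\nabla f(x^*) - A^\top\lambda^* = \nabla_x\LCal_\beta(x^*,\lambda^*) - \beta A^\top(Ax^* - b)$ and apply $\|u-v\|^2 \le 2\|u\|^2 + 2\|v\|^2$ with the spectral-norm bound, obtaining $\|\nabla f(x^*) - A^\top\lambda^*\|^2 \le 2\epsilon + 2\beta^2\|A\|_2^2\epsilon = \gamma_2^2\epsilon$ for $\gamma_2 = \sqrt{2(1+\beta^2\|A\|_2^2)}$, which is exactly the claimed $\gamma_2\sqrt{\epsilon}$-stationarity.

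There is no serious analytic obstacle here: the argument is a two-line application of the factor-of-two splitting inequality in each direction. The only point requiring care is the bookkeeping of constants, namely verifying that the prescribed $\gamma_1$ and $\gamma_2$ are exactly compatible with the factor-of-two splitting and the spectral-norm bound (the forward direction hinges on the identity $\gamma_1^2(3 + 2\beta^2\|A\|_2^2) = 1$, and the converse needs $\gamma_2\ge 1$ so the feasibility residual bound survives). The one conceptual subtlety worth flagging is the reduction itself: one must confirm that the single-block model genuinely empties the conditions \eqref{S3-Prob-Opt-1}, so that the asserted equivalence really is just between the two pairs of norm inequalities identified above and $Q(x^*,\lambda^*)\le\epsilon$.
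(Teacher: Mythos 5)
Your proposal is correct and follows essentially the same route as the paper's own proof: both directions rest on the identity $\nabla_x\LCal_\beta(x^*,\lambda^*)=\nabla f(x^*)-A^\top\lambda^*+\beta A^\top(Ax^*-b)$, the splitting $\|u+v\|^2\le 2\|u\|^2+2\|v\|^2$, and the bound $\|A^\top u\|\le\|A\|_2\|u\|$, with the constants $\gamma_1,\gamma_2$ falling out exactly as you compute. Your explicit remark that the conditions \eqref{S3-Prob-Opt-1} are vacuous in the single-block case is a useful clarification that the paper leaves implicit, but it does not change the argument.
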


\begin{proof}
Suppose $x^*$ is a $\gamma_1\sqrt{{\epsilon}}$-stationary solution as defined in Definition \ref{r-nonconvex-unconstrained}. We have
$\| \nabla f(x^*) - A^{\top}\lambda^* \| \leq \gamma_1\sqrt{\epsilon}$ and $\| Ax^* - b\| \leq \gamma_1\sqrt{\epsilon}$,
which implies that
\begin{eqnarray*}
Q(x^*,\lambda^*) &=& \| \nabla f(x^*) - A^{\top}\lambda^* + \beta A^{\top} (Ax^* - b)\|^2 + \| Ax^* - b\|^2\\
& \le & 2 \| \nabla f(x^*) - A^{\top}\lambda^* \|^2 + 2 \beta^2 \|A\|_2^2\| Ax^* - b\|^2+ \| Ax^* - b\|^2\\
& \le & 2\gamma_1^2 \epsilon + (2 \beta^2 \|A\|_2^2 + 1)\gamma_1^2 \epsilon = \epsilon.
\end{eqnarray*}
On the other hand, if $Q(x^*,\lambda^*)\leq \epsilon$, then we have
$\| \nabla f(x^*) - A^{\top}\lambda^* + \beta A^{\top} (Ax^* - b)\|^2 \le \epsilon$ and $\| Ax^* - b\|^2 \le \epsilon$. Therefore,
\begin{eqnarray*}
\|\nabla f(x^*) - A^{\top}\lambda^*\|^2 &\le& 2\| \nabla f(x^*) - A^{\top}\lambda^* + \beta A^{\top} (Ax^* - b)\|^2 + 2\|-\beta A^{\top} (Ax^* - b)\|^2\\
&\le& 2\| \nabla f(x^*) - A^{\top}\lambda^* + \beta A^{\top} (Ax^* - b)\|^2 + 2\beta^2 \|A\|_2^2 \|Ax^* - b\|^2\\
&\le &2 (1 + \beta^2 \|A\|_2^2)\,\epsilon.
\end{eqnarray*}
The desired result then follows immediately.
\end{proof}

In the following, we introduce two variants of ADMM, to be called proximal ADMM-g and proximal ADMM-m, that solve \eqref{noncvx-block-opt} under some additional assumptions on $A_N$. In particular, proximal ADMM-g assumes $A_N=I$, and proximal ADMM-m assumes $A_N$ to have full row rank.


\subsection{Proximal gradient-based ADMM 
(proximal ADMM-g)}

Our proximal ADMM-g solves \eqref{noncvx-block-opt} under the condition that $A_N=I$. {In this case, the problem reduces to a so-called sharing problem in the literature which has the following form
$$
\ba{ll}
\min & f(x_1,\ldots,x_N) + \sum\limits_{i=1}^{N-1} r_i(x_i) \\
\st  & \sum_{i=1}^{N-1} A_i x_i + x_N = b, \ x_i\in \XCal_i, \ i=1,\ldots,N-1.\ea
$$
For applications of the sharing problem, see \cite{Boyd2011,HongLuoRazaviyayn15,Lin-Ma-Zhang-2015-free-gamma,Lin-Ma-Zhang-2015}.
}
Our proximal ADMM-g for solving \eqref{noncvx-block-opt} with $A_N=I$ is described in Algorithm \ref{alg:gadm}. It can be seen from Algorithm \ref{alg:gadm} that proximal ADMM-g is based on the framework of augmented Lagrangian method, and can be viewed as a variant of the ADMM.
The augmented Lagrangian function of \eqref{noncvx-block-opt} is defined as
\[\LCal_\beta(x_1,\cdots,x_N,\lambda):=f(x_1,\cdots,x_N)+\sum_{i=1}^{N-1}r_i(x_i)-\left\langle\lambda,\sum_{i=1}^N A_ix_i-b \right\rangle+\frac{\beta}{2}\left\|\sum_{i=1}^NA_ix_i-b\right\|_2^2,\]
where $\lambda$ is the Lagrange multiplier associated with the affine constraint, and $\beta>0$ is a penalty parameter.
In each iteration, proximal ADMM-g minimizes the augmented Lagrangian function plus a proximal term for block variables $x_1,\ldots,x_{N-1}$, with other variables being fixed; and then a gradient descent step is conducted for $x_N$, and finally the Lagrange multiplier $\lambda$ is updated. The interested readers are referred to \cite{GaoJiangZhang17} for gradient-based ADMM and its various stochastic variants for convex optimization.


\begin{algorithm}[ht]
\caption{Proximal Gradient-based ADMM (proximal ADMM-g) for solving \eqref{noncvx-block-opt} with $A_N=I$}
\label{alg:gadm}
\begin{algorithmic}[10]
\REQUIRE {Given $\left(x_1^0,x_2^0,\cdots,x_N^0\right)\in\XCal_1\times\cdots\times\XCal_{N-1}\times\RR^{n_N}$, $\lambda^0\in\RR^m$}
\FOR {$k=0,1,\ldots$}
    \STATE[Step 1] $x_i^{k+1} := \argmin_{x_i\in\XCal_i} \ \LCal_\beta(x_1^{k+1},\cdots,x_{i-1}^{k+1},x_i,x_{i+1}^k,\cdots,x_N^k,\lambda^k) + \frac{1}{2}\left\| x_i - x_i^k\right\|^2_{H_i}$ for some positive definite matrix $H_i$, $\ i=1,\ldots,N-1$
    \STATE[Step 2] $x_N^{k+1}  := x_N^k - \gamma\nabla_N \LCal_\beta(x_1^{k+1}, x_2^{k+1}, \cdots, x_N^k, \lambda^k)$
    \STATE[Step 3] $\lambda^{k+1}  := \lambda^k - \beta\left( \sum_{i=1}^N A_i x_i^{k+1} - b\right)$
\ENDFOR
\end{algorithmic}
\end{algorithm}

{

\begin{remark}
Note that here we actually assumed that all subproblems in Step 1 of Algorithm \ref{alg:gadm}, though possibly nonconvex, can be solved to global optimality. Many important problems arising from statistics satisfy this assumption.
In fact, when the coupled objective is absent or can be linearized, after choosing some proper matrix $H_i$, the solution of the corresponding subproblem is given by the proximal mappings of $r_i$. As we mentioned earlier, many nonconvex regularization functions such as SCAD, LSP, MCP and Capped-$\ell_1$ admit closed-form proximal mappings. Moreover, in Algorithm \ref{alg:gadm}, we can choose
\be\label{beta-lower-bound}\beta > \max\left( \frac{18\sqrt{3}+6}{13}L,\; \max\limits_{i=1,2,\ldots,N-1}\frac{6 L^2}{\sigma_{\min}(H_i)}\right),\ee
and
\be\label{lemma:proximal ADMM-g-monotonicity-gamma}\gamma \in \left( \frac{13\beta -\sqrt{13\beta^2 - 12\beta L - 72L^2}}{6L^2 + \beta L + 13\beta^2},  \frac{13\beta + \sqrt{13\beta^2 - 12\beta L - 72L^2}}{6L^2 + \beta L + 13\beta^2} \right)\ee
which guarantee the convergence rate of the algorithm as shown in Lemma~\ref{lemma:proximal ADMM-g-monotonicity-gamma} and Theorem~\ref{Thm:proximal ADMM-g-complexity}.
\end{remark}
}

Before presenting the main result on the iteration complexity of proximal ADMM-g, we need some lemmas.
\begin{lemma}
Suppose the sequence $\{(x_1^k,\cdots,x_N^k , \lambda^k)\}$ is generated by Algorithm \ref{alg:gadm}.
The following inequality holds
\begin{eqnarray}\label{lambda-norm-bound-I}
 \| \lambda^{k+1} - \lambda^k \|^2 &\leq& 3(\beta - 1/\gamma)^2 \| x_N^k - x_N^{k+1} \|^2  \nonumber \\
&& \quad + 3 ((\beta - 1/\gamma)^2 + L^2)\|x_N^{k-1} - x_N^k \|^2 + 3L^2\sum_{i=1}^{N-1} \| x_i^{k+1} - x_i^k\|^2.
\end{eqnarray}
\end{lemma}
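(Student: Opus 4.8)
The plan is to first extract a clean closed-form expression for the multiplier $\lambda^{k+1}$ in terms of primal quantities, and then to difference it across consecutive iterations. The decisive observation is that, because $A_N=I$, the $x_N$-update in Step~2 and the multiplier update in Step~3 can be coupled. Writing out
\[
\nabla_N \LCal_\beta(x_1^{k+1},\cdots,x_{N-1}^{k+1},x_N^k,\lambda^k) = \nabla_N f(x_1^{k+1},\cdots,x_{N-1}^{k+1},x_N^k) - \lambda^k + \beta\Big(\textstyle\sum_{i=1}^{N-1}A_ix_i^{k+1} + x_N^k - b\Big),
\]
Step~2 sets $-\frac{1}{\gamma}(x_N^{k+1}-x_N^k)$ equal to this right-hand side. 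Step~3 supplies the identity $\beta(\sum_{i=1}^{N-1}A_ix_i^{k+1}+x_N^{k+1}-b)=\lambda^k-\lambda^{k+1}$; after rewriting the residual appearing in Step~2 (which uses $x_N^k$) in terms of the residual from Step~3 (which uses $x_N^{k+1}$), at the cost of an extra $-\beta(x_N^{k+1}-x_N^k)$ term, I expect the $\lambda^k$ terms to cancel and to arrive at
\[
\lambda^{k+1} = \nabla_N f(x_1^{k+1},\cdots,x_{N-1}^{k+1},x_N^k) + \left(\tfrac{1}{\gamma}-\beta\right)(x_N^{k+1}-x_N^k).
\]
This identity is the crux of the argument; everything afterward is routine.

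With this relation in hand, applying it at indices $k$ and $k+1$ and subtracting yields
\[
\lambda^{k+1}-\lambda^k = \Delta_N^{k} + \left(\tfrac{1}{\gamma}-\beta\right)(x_N^{k+1}-x_N^k) - \left(\tfrac{1}{\gamma}-\beta\right)(x_N^k-x_N^{k-1}),
\]
where $\Delta_N^{k}:=\nabla_N f(x_1^{k+1},\cdots,x_{N-1}^{k+1},x_N^k) - \nabla_N f(x_1^k,\cdots,x_{N-1}^k,x_N^{k-1})$. Treating the right-hand side as a sum of three vectors and invoking $\|u+v+w\|^2\le 3(\|u\|^2+\|v\|^2+\|w\|^2)$ produces the $3(\beta-1/\gamma)^2\|x_N^{k+1}-x_N^k\|^2$ and $3(\beta-1/\gamma)^2\|x_N^k-x_N^{k-1}\|^2$ contributions, using $(\tfrac{1}{\gamma}-\beta)^2=(\beta-\tfrac{1}{\gamma})^2$.

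It remains to bound $\|\Delta_N^{k}\|^2$. By the global Lipschitz continuity of $\nabla f$ in Assumption~\ref{assump:Lipschitz}, and noting that the last blocks are evaluated at $x_N^k$ and $x_N^{k-1}$ respectively while the first $N-1$ blocks move from $x_i^k$ to $x_i^{k+1}$, we get $\|\Delta_N^{k}\|^2 \le L^2\big(\sum_{i=1}^{N-1}\|x_i^{k+1}-x_i^k\|^2 + \|x_N^k-x_N^{k-1}\|^2\big)$. Multiplying by $3$ contributes the $3L^2\sum_{i=1}^{N-1}\|x_i^{k+1}-x_i^k\|^2$ term and an extra $3L^2\|x_N^k-x_N^{k-1}\|^2$, which merges with the earlier $\|x_N^k-x_N^{k-1}\|^2$ piece to give the coefficient $3((\beta-1/\gamma)^2+L^2)$. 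Collecting the three groups of terms delivers the claimed inequality. The only delicate point is the bookkeeping in deriving the closed-form expression for $\lambda^{k+1}$ — specifically correctly handling the mismatch between the residual in the gradient step (built from $x_N^k$) and the residual in the multiplier update (built from $x_N^{k+1}$); once that is settled, the remainder is a direct application of the triangle-type inequality and the Lipschitz bound.
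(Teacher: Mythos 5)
Your proposal is correct and follows essentially the same route as the paper: both derive the closed-form identity $\lambda^{k+1} = \nabla_N f(x_1^{k+1},\cdots,x_{N-1}^{k+1},x_N^k) + (\beta - \tfrac{1}{\gamma})(x_N^k - x_N^{k+1})$ by combining Steps 2 and 3, then difference consecutive multipliers, apply $\|u+v+w\|^2 \le 3(\|u\|^2+\|v\|^2+\|w\|^2)$, and invoke the Lipschitz bound on $\nabla_N f$ to absorb the gradient difference into the $x_i$ and $x_N^{k-1}-x_N^k$ terms. The only cosmetic slip is the phrase ``at indices $k$ and $k+1$'' where you mean the expressions for $\lambda^{k+1}$ and $\lambda^k$; the displayed formula is nonetheless the right one.
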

\begin{proof}
Note that Steps 2 and 3 of Algorithm \ref{alg:gadm} yield that
\be\label{lambda-xN-I}
\lambda^{k+1} =  (\beta - 1/\gamma) (x_N^k - x_N^{k+1}) +  \nabla_N f(x_1^{k+1},\cdots,x_{N-1}^{k+1},x_N^k).
\ee
Combining \eqref{lambda-xN-I} and \eqref{f-Lipschitz} yields that
\begin{eqnarray*}
& & \| \lambda^{k+1} - \lambda^k \|^2 \\
&\leq &  \|  (\nabla_N f(x_1^{k+1},\cdots,x_{N-1}^{k+1},x_N^k) - \nabla_N f(x_1^k,\cdots,x_{N-1}^k,x_N^{k-1})  ) + (\beta - 1/\gamma) (x_N^k - x_N^{k+1}) \\
& & - (\beta - 1/\gamma)(x_N^{k-1} - x_N^k) \|^2 \\
&\leq & 3 \| \nabla_N f(x_1^{k+1},\cdots,x_{N-1}^{k+1},x_N^k) - \nabla_N f(x_1^k,\cdots,x_{N-1}^k,x_N^{k-1}) \|^2 + 3(\beta - 1/\gamma)^2 \| x_N^k - x_N^{k+1}\|^2 \\
& & + 3\left[ \beta - \frac{1}{\gamma}\right]^2\left\|  x_N^{k-1} - x_N^k \right\|^2 \\
&\leq & 3\left[ \beta - \frac{1}{\gamma}\right]^2\left\| x_N^k - x_N^{k+1} \right\|^2 + 3\left[ \left(\beta - \frac{1}{\gamma}\right)^2 + L^2\right]\left\|  x_N^{k-1} - x_N^k \right\|^2 + 3L^2\sum_{i=1}^{N-1} \left\| x_i^{k+1} - x_i^k\right\|^2.
\end{eqnarray*}
\end{proof}

We now define the following function, which will play a crucial role in our analysis:
\be\label{potential-function}\Psi_G\left(x_1,x_2,\cdots,x_N,\lambda,\bar{x}\right) = \LCal_\beta(x_1,x_2,\cdots,x_N, \lambda) + \frac{3}{\beta}\left[ \left(\beta - \frac{1}{\gamma}\right)^2 + L^2\right] \left\| x_N - \bar{x}\right\|^2.\ee

\begin{lemma}\label{lemma:proximal ADMM-g-monotonicity}
Suppose the sequence $\{(x_1^k,\cdots,x_N^k , \lambda^k)\}$ is generated by Algorithm \ref{alg:gadm},
 where the parameters  $\beta$ and $\gamma$ are taken according to \eqref{beta-lower-bound} and \eqref{lemma:proximal ADMM-g-monotonicity-gamma} respectively.
Then $\Psi_G(x_1^{k+1}, \cdots, x_N^{k+1}, \lambda^{k+1}, x_N^k)$ monotonically decreases over $k\geq 0$.
\end{lemma}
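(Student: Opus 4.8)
The plan is to prove that the potential function $\Psi_G$ is monotonically decreasing by tracking the change in the augmented Lagrangian through one full sweep of the algorithm and then absorbing the ``problematic'' multiplier-difference term using the auxiliary quadratic in $\Psi_G$. First I would decompose the one-step change $\LCal_\beta(x_1^{k+1},\ldots,x_N^{k+1},\lambda^{k+1}) - \LCal_\beta(x_1^k,\ldots,x_N^k,\lambda^k)$ into three groups according to the three steps of Algorithm~\ref{alg:gadm}. For the block updates in Step~1, each subproblem is solved to global optimality and includes a proximal term $\frac12\|x_i-x_i^k\|_{H_i}^2$; by the optimality of the minimizer (or simply comparing the minimizer's objective value against that of $x_i^k$), each of these updates decreases $\LCal_\beta$ by at least $\frac12\|x_i^{k+1}-x_i^k\|_{H_i}^2 \ge \frac{\sigma_{\min}(H_i)}{2}\|x_i^{k+1}-x_i^k\|^2$. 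For the gradient step in Step~2 on $x_N$, I would invoke the descent lemma \eqref{decent-lemma}, together with the fact that $\LCal_\beta$ is $(\beta+L)$-smooth in $x_N$ (since $A_N=I$), to show that this step decreases $\LCal_\beta$ by a quantity of order $\left(\tfrac1\gamma - \tfrac{\beta+L}{2}\right)\|x_N^{k+1}-x_N^k\|^2$; here the range of $\gamma$ in \eqref{lemma:proximal ADMM-g-monotonicity-gamma} is what keeps this coefficient positive.

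The genuinely adverse term is the multiplier update in Step~3: it increases $\LCal_\beta$ by exactly $\frac1\beta\|\lambda^{k+1}-\lambda^k\|^2$, because $\LCal_\beta$ is affine in $\lambda$ and $\lambda^{k+1}-\lambda^k = -\beta(\sum_i A_i x_i^{k+1}-b)$. This is where the previous lemma is essential: I substitute the bound \eqref{lambda-norm-bound-I}, which controls $\|\lambda^{k+1}-\lambda^k\|^2$ by a combination of $\|x_N^k-x_N^{k+1}\|^2$, $\|x_N^{k-1}-x_N^k\|^2$, and $\sum_{i=1}^{N-1}\|x_i^{k+1}-x_i^k\|^2$. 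Collecting all contributions, the change in $\LCal_\beta$ is bounded above by
\[
-\sum_{i=1}^{N-1}\left(\tfrac{\sigma_{\min}(H_i)}{2}-\tfrac{3L^2}{\beta}\right)\|x_i^{k+1}-x_i^k\|^2
- c_1\|x_N^{k+1}-x_N^k\|^2 + c_2\|x_N^{k-1}-x_N^k\|^2,
\]
where $c_1,c_2$ are explicit functions of $\beta,\gamma,L$ coming from the Step~2 descent and the $(\beta-1/\gamma)^2$ terms in \eqref{lambda-norm-bound-I}. The condition \eqref{beta-lower-bound} is precisely what makes each $x_i$-coefficient positive, since $\beta > 6L^2/\sigma_{\min}(H_i)$ gives $\tfrac{\sigma_{\min}(H_i)}{2} > \tfrac{3L^2}{\beta}$.

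Finally I would fold in the extra term of $\Psi_G$. Writing $\Psi_G^k := \Psi_G(x_1^k,\ldots,x_N^k,\lambda^k,x_N^{k-1})$, the difference $\Psi_G^{k+1}-\Psi_G^k$ equals the $\LCal_\beta$-change above plus $\frac3\beta\left[(\beta-1/\gamma)^2+L^2\right]\left(\|x_N^{k+1}-x_N^k\|^2 - \|x_N^k-x_N^{k-1}\|^2\right)$. The key observation is that the coefficient $\frac3\beta\left[(\beta-1/\gamma)^2+L^2\right]$ is chosen to exactly cancel the adverse $c_2\|x_N^{k-1}-x_N^k\|^2$ term (which itself arose as $\frac3\beta[(\beta-1/\gamma)^2+L^2]\|x_N^{k-1}-x_N^k\|^2$ from \eqref{lambda-norm-bound-I}), so the stale ``$k-1$'' dependence telescopes away. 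What remains on the $x_N$ side is a single negative multiple of $\|x_N^{k+1}-x_N^k\|^2$, provided its net coefficient is negative; verifying this reduces to a quadratic inequality in $\gamma$ whose roots are exactly the endpoints of the interval in \eqref{lemma:proximal ADMM-g-monotonicity-gamma}. The main obstacle is this last bookkeeping step: one must confirm that after adding the freshly generated $\frac3\beta[(\beta-1/\gamma)^2+L^2]\|x_N^{k+1}-x_N^k\|^2$ to $c_1$, the descent coefficient from Step~2 still dominates, i.e. that the total coefficient of $\|x_N^{k+1}-x_N^k\|^2$ is strictly negative; this is where the specific constants $18\sqrt3+6$, $13$, and the discriminant $13\beta^2-12\beta L-72L^2$ in \eqref{beta-lower-bound}–\eqref{lemma:proximal ADMM-g-monotonicity-gamma} come from, and it is purely a matter of carefully matching the algebra rather than any conceptual difficulty. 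Concluding, $\Psi_G^{k+1} \le \Psi_G^k$ for all $k\ge 0$.
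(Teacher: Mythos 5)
Your proposal is correct and follows essentially the same route as the paper's proof: the same three-way decomposition of the change in $\LCal_\beta$ across Steps 1--3, the same substitution of \eqref{lambda-norm-bound-I} to absorb the $\frac{1}{\beta}\|\lambda^{k+1}-\lambda^k\|^2$ increase, and the same telescoping of the stale $\|x_N^{k-1}-x_N^k\|^2$ term against the auxiliary quadratic in $\Psi_G$, reducing everything to the quadratic inequality in $\gamma$ behind \eqref{beta-lower-bound} and \eqref{lemma:proximal ADMM-g-monotonicity-gamma}. The only cosmetic difference is that the paper does not require the Step-2 coefficient $\frac{1}{\gamma}-\frac{L+\beta}{2}$ to be positive on its own, only that the final aggregated coefficient of $\|x_N^{k+1}-x_N^k\|^2$ be negative (which in fact implies it).
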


\begin{proof}
From Step 1 of Algorithm \ref{alg:gadm} it is easy to see that
\be\label{lemma:proximal ADMM-g-monotonicity-proof-1}
\LCal_\beta\left(x_1^{k+1},\cdots, x_{N-1}^{k+1}, x_N^k, \lambda^k\right) \leq \LCal_\beta\left(x_1^k, \cdots, x_N^k, \lambda^k\right) - \sum\limits_{i=1}^{N-1}\frac{1}{2}\left\| x_i^k -x_i^{k+1}\right\|^2_{H_i}.
\ee
From Step 2 of Algorithm \ref{alg:gadm} we get that
\begin{equation}\label{lemma:proximal ADMM-g-monotonicity-proof-2}
\ba{lll}
0 & = & \left(x_N^k - x_N^{k+1}\right)^\top\left[ \nabla f(x_1^{k+1}, \cdots, x_{N-1}^{k+1}, x_N^k) - \lambda^k + \beta\left( \sum_{i=1}^{N-1} A_i x_i^{k+1} + x_N^k - b \right) - \frac{1}{\gamma}\left( x_N^k - x_N^{k+1}\right) \right]   \\
& \leq & f(x_1^{k+1},\cdots, x_{N-1}^{k+1}, x_N^k) - f(x_1^{k+1}, \cdots, x_N^{k+1}) + \frac{L}{2}\left\| x_N^k - x_N^{k+1} \right\|^2 - \left( x_N^k - x_N^{k+1}\right)^\top\lambda^k   \\
& & + \frac{\beta}{2}\left\| x_N^k - x_N^{k+1}\right\|^2 + \frac{\beta}{2}\left\| \sum\limits_{i=1}^{N-1} A_i x_i^{k+1} + x_N^k - b\right\|^2 - \frac{\beta}{2}\left\| \sum\limits_{i=1}^{N-1} A_i x_i^{k+1} + x_N^{k+1} - b\right\|^2 - \frac{1}{\gamma}\left\| x_N^k - x_N^{k+1}\right\|^2  \\
& = & \LCal_\beta( x_1^{k+1},\cdots,x_{N-1}^{k+1}, x_N^k, \lambda^k) - \LCal_\beta(x_1^{k+1}, \cdots, x_N^{k+1}, \lambda^k) + \left(\frac{L+\beta}{2} - \frac{1}{\gamma}\right)\left\| x_N^k - x_N^{k+1}\right\|^2,
\ea
\end{equation}
where the inequality follows from \eqref{decent-lemma} and \eqref{triangle-identity}.
Moreover, the following equality holds trivially
\be\label{lemma:proximal ADMM-g-monotonicity-proof-3}
\LCal_\beta(x_1^{k+1}, \cdots, x_N^{k+1}, \lambda^{k+1}) = \LCal_\beta(x_1^{k+1}, \cdots, x_N^{k+1}, \lambda^k) + \frac{1}{\beta}\left\|\lambda^k - \lambda^{k+1}\right\|^2.
\ee
Combining \eqref{lemma:proximal ADMM-g-monotonicity-proof-1}, \eqref{lemma:proximal ADMM-g-monotonicity-proof-2}, \eqref{lemma:proximal ADMM-g-monotonicity-proof-3} and \eqref{lambda-norm-bound-I} yields that
\begin{eqnarray*}
& & \LCal_\beta(x_1^{k+1}, \cdots, x_N^{k+1}, \lambda^{k+1}) - \LCal_\beta(x_1^k, \cdots, x_N^k, \lambda^k) \\
& \leq & \left(\frac{L+\beta}{2} - \frac{1}{\gamma}\right)\left\| x_N^k - x_N^{k+1}\right\|^2 - \sum\limits_{i=1}^{N-1}\frac{1}{2}\left\| x_i^k -x_i^{k+1}\right\|^2_{H_i} + \frac{1}{\beta}\left\|\lambda^k - \lambda^{k+1}\right\|^2 \\
& \leq & \left(\frac{L+\beta}{2} - \frac{1}{\gamma} + \frac{3}{\beta}\left[ \beta - \frac{1}{\gamma}\right]^2\right)\left\| x_N^k - x_N^{k+1}\right\|^2 + \frac{3}{\beta}\left[ \left(\beta - \frac{1}{\gamma}\right)^2 + L^2\right]\left\| x_N^{k-1} - x_N^k \right\|^2 \\
& & + \sum\limits_{i=1}^{N-1} \left(x_i^k - x_i^{k+1}\right)^{\top}\left( \frac{3L^2}{\beta}I - \frac{1}{2}H_i\right)\left(x_i^k - x_i^{k+1}\right),
\end{eqnarray*}
which further implies that
\begin{eqnarray}
& & \Psi_G(x_1^{k+1}, \cdots, x_N^{k+1}, \lambda^{k+1}, x_N^k) - \Psi_G(x_1^k, \cdots, x_N^k, \lambda^k, x_N^{k-1})\label{bound-potential-function-I} \\
& \leq & \left(\frac{L+\beta}{2} - \frac{1}{\gamma} + \frac{6}{\beta}\left[ \beta - \frac{1}{\gamma}\right]^2 + \frac{3L^2}{\beta}\right)\left\| x_N^k - x_N^{k+1}\right\|^2 \nonumber \\
&& \qquad - \sum\limits_{i=1}^{N-1}\left\| x_i^k - x_i^{k+1}\right\|^2_{  \frac{1}{2}H_i - \frac{3L^2}{\beta}I } . \nonumber
\end{eqnarray}
It is easy to verify that when $\beta > \frac{18\sqrt{3}+6}{13}L$, then $\gamma$ defined as in \eqref{lemma:proximal ADMM-g-monotonicity-gamma} ensures that $\gamma>0$ and
\begin{equation}\label{beta-L-decrease}
\frac{L+\beta}{2} - \frac{1}{\gamma} + \frac{6}{\beta}\left[ \beta - \frac{1}{\gamma}\right]^2 + \frac{3L^2}{\beta} < 0.
\end{equation}
Therefore, choosing $\beta > \max\left( \frac{18\sqrt{3}+6}{13}L,\; \max\limits_{i=1,2,\ldots,N-1}\frac{6 L^2}{\sigma_{\min}(H_i)}\right)$ and $\gamma$ as in \eqref{lemma:proximal ADMM-g-monotonicity-gamma} guarantees that $\Psi_G(x_1^{k+1}, \cdots, x_N^{k+1}, \lambda^{k+1}, x_N^k)$ monotonically decreases over $k\geq 0$.
In fact, \eqref{beta-L-decrease} can be verified as follows. By denoting $z=\beta-\frac{1}{\gamma}$, \eqref{beta-L-decrease} is equivalent to
\[
12 z^2 + 2\beta z + \left( 6L^2 + \beta L - \beta^2\right) < 0,
\]
{which holds when $\beta > \frac{18\sqrt{3}+6}{13}L$ and
$\frac{-\beta - \sqrt{13\beta^2 - 12\beta L - 72L^2}}{12} < z < \frac{-\beta + \sqrt{13\beta^2 - 12\beta L - 72L^2}}{12}$,
i.e.,
$$\frac{-13\beta - \sqrt{13\beta^2 - 12\beta L - 72L^2}}{12} < -\frac{1}{\gamma} < \frac{-13\beta + \sqrt{13\beta^2 - 12\beta L - 72L^2}}{12},$$
which holds when $\gamma$ is chosen as in \eqref{lemma:proximal ADMM-g-monotonicity-gamma}.}
\end{proof}

\begin{lemma}\label{lemma:lower-bound-I}
Suppose the sequence $\{(x_1^k,\cdots,x_N^k , \lambda^k)\}$ is generated by Algorithm \ref{alg:gadm}.
Under the same conditions as in Lemma \ref{lemma:proximal ADMM-g-monotonicity}, for any $k\geq 0$, we have
\[\Psi_G\left(x_1^{k+1}, \cdots, x_N^{k+1}, \lambda^{k+1}, x_N^k\right) \geq \sum_{i=1}^{N-1} r_i^*+f^*,\]
where $r_i^*$ and $f^*$ are defined in Assumption \ref{value-lower-bound}.
\end{lemma}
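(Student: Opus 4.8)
The plan is to establish the lower bound by peeling off the penalty and proximal terms in $\Psi_G$ and showing the remaining quantity is bounded below by the separable infima $f^*$ and $r_i^*$. First I would write out $\Psi_G(x_1^{k+1},\cdots,x_N^{k+1},\lambda^{k+1},x_N^k)$ explicitly using the definitions \eqref{potential-function} and the augmented Lagrangian, so that it reads
\[
f(x_1^{k+1},\cdots,x_N^{k+1}) + \sum_{i=1}^{N-1} r_i(x_i^{k+1}) - \left\langle \lambda^{k+1}, \sum_{i=1}^N A_i x_i^{k+1} - b\right\rangle + \frac{\beta}{2}\left\| \sum_{i=1}^N A_i x_i^{k+1} - b\right\|^2 + \frac{3}{\beta}\left[\left(\beta-\tfrac{1}{\gamma}\right)^2 + L^2\right]\|x_N^{k+1} - x_N^k\|^2.
\]
The last proximal term is manifestly nonnegative, so it can simply be dropped. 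The key obstacle is to handle the inner product $-\langle \lambda^{k+1}, \sum_i A_i x_i^{k+1} - b\rangle$ together with the quadratic penalty, since $\lambda^{k+1}$ is not directly controlled; the natural idea is to use the multiplier update to re-express the linear term.

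The critical step is to use \eqref{lambda-xN-I}, namely $\lambda^{k+1} = (\beta - 1/\gamma)(x_N^k - x_N^{k+1}) + \nabla_N f(x_1^{k+1},\cdots,x_{N-1}^{k+1},x_N^k)$, and the fact that $A_N = I$ gives $\sum_{i=1}^N A_i x_i^{k+1} - b = -\frac{1}{\beta}(\lambda^{k+1}-\lambda^k)$ from Step 3. I would combine these to rewrite the inner product and the penalty term in a way that exposes $f$ evaluated at $(x_1^{k+1},\cdots,x_N^{k+1})$ plus controllable residuals. Concretely, substituting $\lambda^{k+1}$ from \eqref{lambda-xN-I} into the linear term and applying the descent inequality \eqref{decent-lemma} (which relates $f$ at $x_N^k$ and $x_N^{k+1}$ via the gradient and an $L$-quadratic) should let me absorb the cross terms, so that the whole expression is bounded below by $f(x_1^{k+1},\cdots,x_N^{k+1}) + \sum_{i=1}^{N-1} r_i(x_i^{k+1})$ minus nonpositive quantities, or directly by this sum.

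Once the potential function is shown to dominate $f(x_1^{k+1},\cdots,x_N^{k+1}) + \sum_{i=1}^{N-1} r_i(x_i^{k+1})$ (possibly after verifying that the leftover quadratic coefficients are of the right sign using the parameter choices \eqref{beta-lower-bound} and \eqref{lemma:proximal ADMM-g-monotonicity-gamma}), the conclusion follows immediately from Assumption \ref{value-lower-bound}: since $f \geq f^*$ over the feasible domain and each $r_i \geq r_i^*$ over $\XCal_i$, and the iterates satisfy $x_i^{k+1} \in \XCal_i$ for $i=1,\ldots,N-1$ and $x_N^{k+1}\in\RR^{n_N}$, we obtain $\Psi_G \geq f^* + \sum_{i=1}^{N-1} r_i^*$. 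I expect the main obstacle to be the algebraic manipulation that removes the $\lambda^{k+1}$ dependence: it must be done so that no unbounded term (in particular nothing scaling with $\|\lambda^{k+1}\|$) survives, and the sign bookkeeping on the residual $\|x_N^k - x_N^{k+1}\|^2$ terms relies delicately on the admissible range of $\gamma$ and $\beta$ from the previous lemma. Verifying that those coefficients indeed combine into a nonnegative (or otherwise harmless) contribution is the part I would check most carefully.
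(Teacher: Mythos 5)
Your overall strategy is the same as the paper's: substitute $\lambda^{k+1}$ via \eqref{lambda-xN-I} into the linear term of $\LCal_\beta$, apply the descent inequality \eqref{decent-lemma} to produce $f$ evaluated at the ``feasibility-corrected'' point $\left(x_1^{k+1},\cdots,x_{N-1}^{k+1},\, b-\sum_{i=1}^{N-1}A_ix_i^{k+1}\right)$, absorb the cross terms into the quadratic penalty, and finish with Assumption \ref{value-lower-bound}. However, there is one genuine error at the very first step: you assert that the term $\frac{3}{\beta}\bigl[(\beta-\tfrac{1}{\gamma})^2+L^2\bigr]\|x_N^{k+1}-x_N^k\|^2$ is nonnegative and ``can simply be dropped.'' It cannot. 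When you bound the cross term
\[
-\Bigl\langle \textstyle\sum_{i=1}^{N}A_ix_i^{k+1}-b,\;\bigl(\beta-\tfrac{1}{\gamma}\bigr)(x_N^k-x_N^{k+1})+\nabla_N f(\cdots,x_N^k)-\nabla_N f(\cdots,x_N^{k+1})\Bigr\rangle
\]
by Young's inequality (giving up a $\tfrac{\beta}{6}\|\sum_i A_ix_i^{k+1}-b\|^2$ portion of the penalty), you are left with an unavoidable residual of exactly $-\frac{3}{\beta}\bigl[(\beta-\tfrac{1}{\gamma})^2+L^2\bigr]\|x_N^k-x_N^{k+1}\|^2$. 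This coefficient is strictly negative for every admissible $\beta$ and $\gamma$ --- it is minus a sum of squares --- so the ``sign bookkeeping'' you defer to the end cannot rescue it, and no choice of parameters from \eqref{beta-lower-bound} or \eqref{lemma:proximal ADMM-g-monotonicity-gamma} makes it harmless. The conclusion one actually obtains is only $\LCal_\beta(x^{k+1},\lambda^{k+1})\ge \sum_i r_i^*+f^*-\frac{3}{\beta}\bigl[(\beta-\tfrac{1}{\gamma})^2+L^2\bigr]\|x_N^k-x_N^{k+1}\|^2$, and the statement of the lemma is false with $\LCal_\beta$ in place of $\Psi_G$.

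The repair is to keep the proximal term: its coefficient in the definition \eqref{potential-function} of $\Psi_G$ is calibrated precisely to cancel this negative residual, which is the entire reason the potential function carries that extra term. So the correct order of operations is the reverse of yours --- first derive the lower bound on $\LCal_\beta$ with the negative residual explicit, then add back $\frac{3}{\beta}\bigl[(\beta-\tfrac{1}{\gamma})^2+L^2\bigr]\|x_N^{k+1}-x_N^k\|^2$ to obtain the claimed bound on $\Psi_G$. The remaining ingredients of your outline (the $\beta\ge 3L/2$ condition to keep the surviving penalty coefficient $\tfrac{\beta}{2}-\tfrac{\beta}{6}-\tfrac{L}{2}$ nonnegative, and the fact that $f^*$ is an infimum over all of $\RR^{n_N}$ in the last block so the corrected point is admissible) are fine.
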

\begin{proof}
Note that from \eqref{lambda-xN-I}, we have
\begin{eqnarray*}
& & \LCal_\beta(x_1^{k+1}, \cdots, x_N^{k+1}, \lambda^{k+1}) \\
&= & \sum\limits_{i=1}^{N-1} r_i(x_i^{k+1}) + f(x_1^{k+1}, \cdots, x_N^{k+1}) - \left( \sum\limits_{i=1}^{N-1} A_i x_i^{k+1} + x_N^{k+1} - b \right)^\top\nabla_N f(x_1^{k+1}, \cdots, x_N^{k+1}) \\
& & + \frac{\beta}{2}\left\| \sum\limits_{i=1}^{N-1} A_i x_i^{k+1} + x_N^{k+1} - b \right\|^2 - \left( \sum\limits_{i=1}^{N-1} A_i x_i^{k+1} + x_N^{k+1} - b \right)^\top\left[ \left( \beta - \frac{1}{\gamma}\right) \left( x_N^k - x_N^{k+1}\right)  \right.  \\
& & \left. + \left( \nabla_N f(x_1^{k+1}, \cdots, x_{N-1}^{k+1}, x_N^k) - \nabla_N f(x_1^{k+1}, \cdots, x_N^{k+1})\right) \right] \\
&\geq & \sum\limits_{i=1}^{N-1} r_i(x_i^{k+1}) + f(x_1^{k+1}, \cdots, x_{N-1}^{k+1}, b-\sum_{i=1}^{N-1} A_i x_i^{k+1}) + \left(\frac{\beta}{2} -\frac{\beta}{6} - \frac{L}{2}\right)\left\| \sum\limits_{i=1}^{N-1} A_i x_i^{k+1} + x_N^{k+1} - b \right\|^2 \\
& & - \frac{3}{\beta}\left[ \left(\beta - \frac{1}{\gamma}\right)^2 + L^2\right] \left\| x_N^k - x_N^{k+1}\right\|^2 \\
&\geq & \sum\limits_{i=1}^{N-1} r_i^* + f^* - \frac{3}{\beta}\left[ \left(\beta - \frac{1}{\gamma}\right)^2 + L^2\right] \left\| x_N^k - x_N^{k+1}\right\|^2,
\end{eqnarray*}
where the first inequality follows from \eqref{decent-lemma}, 
and the second inequality is due to $\beta\geq 3L/2$. The desired result follows from the definition of $\Psi_G$ in \eqref{potential-function}.
\end{proof}

Now we are ready to give the iteration complexity of Algorithm \ref{alg:gadm} for finding an $\epsilon$-stationary solution of \eqref{noncvx-block-opt}.
\begin{theorem}\label{Thm:proximal ADMM-g-complexity}
Suppose the sequence $\{(x_1^k,\cdots,x_N^k , \lambda^k)\}$ is generated by Algorithm \ref{alg:gadm}.
Furthermore, suppose that $\beta$ satisfies \eqref{beta-lower-bound} and $\gamma$ satisfies \eqref{lemma:proximal ADMM-g-monotonicity-gamma}. 
Denote
\begin{eqnarray*}
&\kappa_1 := \frac{3}{\beta^2}\left[ \left(\beta - \frac{1}{\gamma}\right)^2 + L^2\right] ,\quad \kappa_2  := \left(|\beta - \frac{1}{\gamma}| + L \right)^2,\quad \kappa_3:=\max\limits_{1\leq i\leq N-1}\left(\diam(\XCal_i)\right)^2, \\
&\kappa_4 := \left( L + \beta\sqrt{N}\max\limits_{1\leq i\leq N}\left[\| A_i \|_2^2\right] + \max\limits_{1\leq i\leq N}\|H_i\|_2 \right)^2\\
\end{eqnarray*}
and
\be\label{def-tau}
\tau := \min\left\{ -\left(\frac{L+\beta}{2} - \frac{1}{\gamma} + \frac{6}{\beta}\left[ \beta - \frac{1}{\gamma}\right]^2 + \frac{3L^2}{\beta}\right), \min_{i=1,\ldots,N-1}\left\{ -\left(\frac{3L^2}{\beta} - \frac{\sigma_{\min}(H_i)}{2}\right) \right\}\right\} > 0.
\ee
Then to get an $\epsilon$-stationary solution, the number of iterations that the algorithm runs can be upper bounded by:
\be\label{def-K-proximal ADMM-g} K := \left\{ \begin{array}{ll}
\left\lceil \frac{2\max\{\kappa_1,\kappa_2,\kappa_4\cdot \kappa_3\}}{\tau\,\epsilon^2}\left(\Psi_G(x_1^1, \cdots, x_N^1, \lambda^1, x_N^0)  - \sum_{i=1}^{N-1}r_i^* - f^*\right) \right\rceil, & \mbox{for {\bf Setting 1}} \\ {\ } \\
\left\lceil \frac{2\max\{\kappa_1,\kappa_2,\kappa_4\}}{\tau\,\epsilon^2}\left(\Psi_G(x_1^1, \cdots, x_N^1, \lambda^1, x_N^0)  - \sum_{i=1}^{N-1}r_i^* - f^*\right) \right\rceil, & \mbox{for {\bf Setting 2}}
\end{array}\right. \ee
and we can further identify one iteration $\hat{k} \in \argmin\limits_{2\le k \le K+1} \sum_{i=1}^N \left(\| x_i^k - x_i^{k+1} \|^2 + \| x_i^{k-1} - x_i^k \|^2\right)$ such that
$(x_1^{\hat k}, \cdots, x_N^{\hat k} )$ is an $\epsilon$-stationary solution for optimization problem \eqref{noncvx-block-opt} {with Lagrange multiplier  $\lambda^{\hat k}$ and $A_N=I$, for Settings 1 and 2 respectively.}
\end{theorem}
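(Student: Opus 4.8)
The plan is to combine the two preparatory lemmas into a standard ``sufficient descent $+$ telescoping $+$ pigeonhole'' argument, and then to convert smallness of consecutive iterate differences into the three residuals appearing in Definitions~\ref{r-nonconvex-constrained}/\ref{r-nonconvex-unconstrained}. Throughout I abbreviate $\Psi_G^k := \Psi_G(x_1^k,\ldots,x_N^k,\lambda^k,x_N^{k-1})$.

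First I would record the one-step decrease. Inequality \eqref{bound-potential-function-I} from the proof of Lemma~\ref{lemma:proximal ADMM-g-monotonicity}, together with the definition \eqref{def-tau} of $\tau>0$, gives
\[\Psi_G^{k+1} - \Psi_G^k \le -\tau\sum_{i=1}^N\|x_i^k-x_i^{k+1}\|^2.\]
Summing over $k=1,\ldots,K+1$ and invoking the uniform lower bound of Lemma~\ref{lemma:lower-bound-I} yields $\tau\sum_{k=1}^{K+1}\sum_{i=1}^N\|x_i^k-x_i^{k+1}\|^2 \le \Psi_G^1 - \sum_i r_i^* - f^*$. An averaging step over the window $2\le k\le K+1$ then shows that the minimizing index $\hat k$ satisfies
\[\sum_{i=1}^N\left(\|x_i^{\hat k}-x_i^{\hat k+1}\|^2+\|x_i^{\hat k-1}-x_i^{\hat k}\|^2\right)\le \frac{2\left(\Psi_G^1-\sum_i r_i^*-f^*\right)}{\tau K},\]
since each squared forward difference is counted at most twice in the double sum. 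With $K$ as in \eqref{def-K-proximal ADMM-g}, the right-hand side is at most $\epsilon^2/\max\{\kappa_1,\kappa_2,\kappa_4\kappa_3\}$ (Setting~1) or $\epsilon^2/\max\{\kappa_1,\kappa_2,\kappa_4\}$ (Setting~2).

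Next I would verify the three stationarity residuals at $(x_1^{\hat k},\ldots,x_N^{\hat k},\lambda^{\hat k})$ one at a time, each reduced to the controlled differences above. For primal feasibility \eqref{S1-Prob-Opt-3} I use that Step~3 gives $\sum_i A_ix_i^{\hat k}-b=-\frac1\beta(\lambda^{\hat k}-\lambda^{\hat k-1})$, and bound $\|\lambda^{\hat k}-\lambda^{\hat k-1}\|^2$ by the three-point estimate \eqref{lambda-norm-bound-I}; its coefficients are exactly what define $\kappa_1=\frac{3}{\beta^2}[(\beta-1/\gamma)^2+L^2]$. For the block-$N$ condition \eqref{S1-Prob-Opt-2} (recall $A_N=I$) I use the identity \eqref{lambda-xN-I}, which after adding and subtracting $\nabla_Nf$ at the fully-updated point and applying \eqref{f-Lipschitz} gives $\|\nabla_Nf(x^{\hat k})-\lambda^{\hat k}\|\le(|\beta-1/\gamma|+L)\|x_N^{\hat k}-x_N^{\hat k-1}\|$, so the residual is $\le\kappa_2$ times a controlled difference. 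For the block-$i$ condition I write the first-order optimality of the Step-1 subproblem --- using Proposition~\ref{nonsmooth-property}(iv) in Setting~1 (yielding $g_i^{\hat k}\in\partial r_i(x_i^{\hat k})$ with a variational inequality on $\XCal_i$) and Fermat's rule in Setting~2 --- and compare it to the target residual $g_i^{\hat k}+\nabla_if(x^{\hat k})-A_i^\top\lambda^{\hat k}$. The discrepancies are the gradient evaluated at the Gauss--Seidel (partially-updated) point versus $x^{\hat k}$ (controlled by $L$ via \eqref{f-Lipschitz}), the proximal term $H_i(x_i^{\hat k}-x_i^{\hat k-1})$, and the mismatch $A_i^\top(\lambda^{\hat k-1}-\lambda^{\hat k})$ plus $\beta A_i^\top A_j$ terms (which collapse via Step~3). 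Collecting these reproduces $\kappa_4=(L+\beta\sqrt N\max_i\|A_i\|_2^2+\max_i\|H_i\|_2)^2$; in Setting~1 multiplying by $\diam(\XCal_i)$ via Cauchy--Schwarz produces the extra factor $\kappa_3$, while in Setting~2 the residual directly upper-bounds $\dist(-\nabla_if+A_i^\top\lambda^{\hat k},\partial r_i(x_i^{\hat k}))$ with no diameter factor --- which is precisely why $K$ differs between the two settings.

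The main obstacle is this last step: matching the subproblem optimality conditions to the VI-form residuals requires careful bookkeeping of which iterate each quantity is evaluated at, and every mismatch must be shown to live among the at-most-two consecutive iterate differences that the minimized combination controls. In particular the feasibility residual forces one through the three-point bound \eqref{lambda-norm-bound-I}, so one must confirm the differences appearing there are exactly those captured by the window defining $\hat k$; getting this index alignment correct (and collecting constants so that $\max\{\kappa_1,\kappa_2,\kappa_4\kappa_3\}$, resp.\ $\max\{\kappa_1,\kappa_2,\kappa_4\}$, emerges) is the delicate part, whereas the descent and averaging steps are routine given Lemmas~\ref{lemma:proximal ADMM-g-monotonicity} and~\ref{lemma:lower-bound-I}.
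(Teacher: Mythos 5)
Your proposal follows essentially the same route as the paper's proof: telescoping the one-step descent \eqref{bound-potential-function-I} against the lower bound of Lemma~\ref{lemma:lower-bound-I}, averaging over the window $2\le k\le K+1$ to locate $\hat k$ with $\theta_{\hat k}=O(\epsilon^2/\max\{\kappa_j\})$, and then converting $\theta_{\hat k}$ into the feasibility residual via Step~3 and \eqref{lambda-norm-bound-I} (giving $\kappa_1$), the block-$N$ residual via \eqref{lambda-xN-I} (giving $\kappa_2$), and the block-$i$ residuals via the Step-1 optimality conditions (giving $\kappa_4$, with the extra $\diam(\XCal_i)$ factor $\kappa_3$ in Setting~1). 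The bookkeeping you flag as delicate is carried out in the paper exactly as you outline, so the proposal is correct and matches the paper's argument.
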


\begin{proof}
For ease of presentation, denote
\be\label{theta_k}\theta_k:=\sum_{i=1}^N(\|x_i^{k} - x_i^{k+1}\|^2 + \|x_i^{k-1} - x_i^{k}\|^2).\ee
By summing \eqref{bound-potential-function-I} over $k=1,\ldots,K$, we obtain that
\be\label{diff-Psi-G}
\Psi_G(x_1^{K+1}, \cdots, x_N^{K+1}, \lambda^{K+1}, x_N^K) - \Psi_G(x_1^1, \cdots, x_N^1, \lambda^1, x_N^0) \leq -\tau\sum_{k=1}^{K} \sum_{i=1}^N \left\| x_i^k - x_i^{k+1}\right\|^2,
\ee
where $\tau$ is defined in \eqref{def-tau}.
By invoking Lemmas \ref{lemma:proximal ADMM-g-monotonicity} and \ref{lemma:lower-bound-I}, we get
\begin{eqnarray*}
\min_{2\le k \le K+1} \theta_k & \leq & \frac{1}{\tau\,K}\left[ \Psi_G(x_1^1, \cdots, x_N^1, \lambda^1, x_N^0) + \Psi_G(x_1^2, \cdots, x_N^2, \lambda^2, x_N^1)  - 2\sum_{i=1}^{N}r_i^* - 2f^*\right] \\
&\leq & \frac{2}{\tau\,K}\left[ \Psi_G(x_1^1, \cdots, x_N^1, \lambda^1, x_N^0)  - \sum_{i=1}^{N}r_i^* - f^*\right] .
\end{eqnarray*}

We now derive upper bounds on the terms in \eqref{S1-Prob-Opt-2} and \eqref{S1-Prob-Opt-3} through $\theta_k$.
Note that \eqref{lambda-xN-I} implies that
\begin{eqnarray*}
& & \| \lambda^{k+1} - \nabla_N f(x_1^{k+1}, \cdots, x_N^{k+1})\| \\
& \leq &  | \beta - \frac{1}{\gamma} | \, \|x_N^k - x_N^{k+1}\| + \|  \nabla_N f(x_1^{k+1}, \cdots, x_{N-1}^{k+1}, x_N^k) - \nabla f(x_1^{k+1}, \cdots, x_N^{k+1})  \|\\
& \le & \left[|\beta - \frac{1}{\gamma}| + L \right]\| x_N^k - x_N^{k+1}\|,
\end{eqnarray*}
which yields
\begin{equation}\label{proximal ADMM-g-bound-2}
\| \lambda^{k+1} - \nabla_N f(x_1^{k+1}, \cdots, x_N^{k+1}) \| ^2 \le \left[|\beta - \frac{1}{\gamma}| + L \right]^2 \theta_k.
\end{equation}
From Step 3 of Algorithm \ref{alg:gadm} and \eqref{lambda-norm-bound-I} it is easy to see that
\begin{eqnarray} \label{proximal ADMM-g-bound-1}
\ba{ll}
 & \left\| \sum\limits_{i=1}^{N-1} A_i x_i^{k+1} + x_N^{k+1} - b \right\|^2 = \frac{1}{\beta^2}\| \lambda^{k+1} - \lambda^k \|^2 \\
 \leq & \frac{3}{\beta^2}\left[ \beta - \frac{1}{\gamma}\right]^2\left\| x_N^k - x_N^{k+1} \right\|^2 + \frac{3}{\beta^2}\left[ \left(\beta - \frac{1}{\gamma}\right)^2 + L^2\right]\left\| x_N^{k-1} - x_N^k \right\|^2  \\ & + \frac{3L^2}{\beta^2}\sum_{i=1}^{N-1} \left\| x_i^k - x_i^{k+1}\right\|^2   \\
 \leq & \frac{3}{\beta^2}\left[ \left(\beta - \frac{1}{\gamma}\right)^2 + L^2\right]\theta_k.
\ea
\end{eqnarray}

We now derive upper bounds on the terms in \eqref{S2-Prob-Opt-1} and \eqref{S3-Prob-Opt-1} under the two settings in Table \ref{table:epsilon-stationary}, respectively.

{\bf Setting 2.} Because $r_i$ is lower semi-continuous and $\XCal_i = \RR^{n_i}$, $i=1,\ldots,N-1$, it follows from Step 1 of Algorithm \ref{alg:gadm} that there exists a general subgradient $g_i\in\partial r_i(x_i^{k+1})$ such that
\begin{eqnarray}
& &\dist \left(-\nabla_i f(x_1^{k+1},\cdots, x^{k+1}_N) + A_i^\top {\lambda}^{k+1}, \partial r_i(x_i^{k+1})\right) \label{proximal ADMM-g-bound-3} \\
& \le & \left\| g_i+\nabla_i f(x_1^{k+1},\cdots, x^{k+1}_N) - A_i^\top {\lambda}^{k+1} \right\| \nonumber \\
& = &  \bigg{\|}\nabla_i f(x_1^{k+1},\cdots, x^{k+1}_N)-\nabla_i f(x_1^{k+1},\cdots, x_i^{k+1},x_{i+1}^{k},\cdots,x^{k}_N) \nonumber \\
& &    + \, \beta A_i^{\top}\bigg{(} \sum_{j=i+1}^{N}A_j(x_j^{k+1} - x_j^{k})  \bigg{)}  - H_i (x_i^{k+1} - x_i^k)   \bigg{\|}     \nonumber \\
& \le & L \, \sqrt{\sum_{j=i+1}^N \| x_j^k - x_j^{k+1} \|^2 }  \,+ \beta\,\| A_i \|_2\, \,\sum\limits_{j=i+1}^N \| A_j \|_2 \| x_j^{k+1} - x_j^{k}\| \nonumber \\
&& + \|H_i\|_2\|x_i^{k+1} - x_i^{k}\|_2\nonumber \\
& \le &  \left( L + \beta\sqrt{N}\max_{i+1\leq j\leq N}\left[\| A_j \|_2\right]\| A_i \|_2 \right)\,\sqrt{\sum_{j=i+1}^N \| x_j^k - x_j^{k+1} \|^2 } \nonumber \\
&& +  \|H_i\|_2 \|x_i^{k+1} - x_i^{k}\|_2\nonumber \\
& \le &  \left(L +  \beta\sqrt{N}\max_{1\leq i\leq N}\left[\| A_i \|_2^2\right] + \max_{1\leq i\leq N} \|H_i\|_2 \right)\sqrt{\theta_k}.\nonumber
\end{eqnarray}
By combining \eqref{proximal ADMM-g-bound-3}, \eqref{proximal ADMM-g-bound-2} and \eqref{proximal ADMM-g-bound-1} we conclude that Algorithm \ref{alg:gadm} returns an $\epsilon$-stationary solution for \eqref{noncvx-block-opt} according to Definition \ref{r-nonconvex-unconstrained} under the conditions of Setting 2 in Table \ref{table:epsilon-stationary}.

{\bf Setting 1.} Under this setting, we know $r_i$ is Lipschitz continuous and $\XCal_i \subset \RR^{n_i}$ is convex and compact. {From Assumption \ref{assump:Lipschitz} and the fact that $\XCal_i$ is compact,} we know $r_i(x_i) + f(x_1,\cdots, x_N)$ is locally Lipschitz continuous with respect to $x_i$ for $i=1,2,\ldots, N-1$. Similar to \eqref{proximal ADMM-g-bound-3}, for any $x_i \in \XCal_i$, Step 1 of Algorithm \ref{alg:gadm} yields that
\begin{eqnarray}
& & \left(x_i-x_i^{k+1}\right)^\top\left[g_i+\nabla_i f(x_1^{k+1},\cdots, x^{k+1}_N) - A_i^\top {\lambda}^{k+1} \right] \label{proximal ADMM-g-bound-4} \\
&\geq &  \left(x_i-x_i^{k+1}\right)^\top\bigg{[}\nabla_i f(x_1^{k+1},\cdots, x^{k+1}_N)-\nabla_i f(x_1^{k+1},\cdots, x_i^{k+1},x_{i+1}^{k},\cdots,x^{k}_N) \nonumber \\
& & + \, \beta A_i^{\top}\bigg{(} \sum_{j=i+1}^{N}A_j(x_j^{k+1} -x_j^{k})  \bigg{)}  - H_i(x_i^{k+1} - x_i^k)   \bigg{]}  \nonumber \\
&\geq & -L \, \diam(\XCal_i) \sqrt{\sum_{j=i+1}^N \| x_j^k - x_j^{k+1} \|^2 }\nonumber \\
& & - \beta\| A_i \|_2\, \diam(\XCal_i)\sum\limits_{j=i+1}^N
\| A_j \|_2 \| x_j^{k+1} - x_j^{k}\|  - \diam(\XCal_i)\, \|H_i\|_2 \|x_i^{k+1} - x_i^{k}\|_2  \nonumber \\
&\geq & - \left( \beta\sqrt{N}\max_{1\leq i\leq N}\left[\| A_i \|_2^2\right] + L + \max_{1\leq i\leq N}\|H_i\|_2 \right)\max_{1\leq i\leq N-1}\left[\diam(\XCal_i)\right] \sqrt{\theta_k}, \nonumber
\end{eqnarray}
where $g_i\in\partial r_i(x_i^{k+1})$ is a general subgradient of $r_i$ at $x_i^{k+1}$. By combining \eqref{proximal ADMM-g-bound-4}, \eqref{proximal ADMM-g-bound-2} and \eqref{proximal ADMM-g-bound-1} we conclude that Algorithm \ref{alg:gadm} returns an $\epsilon$-stationary solution for \eqref{noncvx-block-opt} according to Definition \ref{r-nonconvex-constrained} under the conditions of Setting 1 in Table \ref{table:epsilon-stationary}.

\end{proof}

\begin{remark}\label{remark:gadm}
Note that the potential function $\Psi_G$ defined in \eqref{potential-function} is related to the augmented Lagrangian function.
The augmented Lagrangian function has been used as a potential function in analyzing the convergence of nonconvex splitting and ADMM methods in \cite{Ames-Hong-2016,LiPong15,HongLuoRazaviyayn15,Hong-2014-distributed,Hong16}.
See \cite{Hong16} for a more detailed discussion on this.
\end{remark}

\begin{remark}\label{remark:gadm-1}
In Step 1 of Algorithm \ref{alg:gadm}, we can also replace the function $$f(x_1^{k+1},\cdots,x_{i-1}^{k+1},x_i,x_{i+1}^k,\cdots,x_N^k)$$ by its linearization
$$f(x_1^{k+1},\cdots,x_{i-1}^{k+1},x_i^k,x_{i+1}^k,\cdots,x_N^k) + \left( x_i - x_i^k \right)^\top\nabla_i f(x_1^{k+1},\cdots, x_{i-1}^{k+1},x_i^k,x_{i+1}^k, \cdots, x_{N}^k),$$
so that the subproblem can be solved by computing the proximal mappings of $r_i$, with some properly chosen matrix $H_i$ for $i =1, \ldots, N-1$, and
the same iteration bound still holds.
\end{remark}

\subsection{Proximal majorization ADMM (proximal ADMM-m)}
Our proximal ADMM-m solves \eqref{noncvx-block-opt} under the condition that $A_N$ has full row rank. In this section, we use $\sigma_N$ to denote the smallest eigenvalue of $A_NA_N^\top$. Note that $\sigma_N >0$ because $A_N$ has full row rank. Our proximal ADMM-m can be described as follows
\begin{algorithm}[ht]
\caption{Proximal majorization ADMM (proximal ADMM-m) for solving \eqref{noncvx-block-opt} with $A_N$ being full row rank}
\label{alg:ladm}
\begin{algorithmic}[10]
\REQUIRE {Given $\left(x_1^0,x_2^0,\cdots,x_N^0\right)\in\XCal_1\times\cdots\times\XCal_{N-1}\times\RR^{n_N}$, $\lambda^0\in\RR^m$}
\FOR {$k=0,1,\ldots$}
    \STATE[Step 1] $x_i^{k+1} := \argmin_{x_i\in\XCal_i} \ \LCal_\beta(x_1^{k+1},\cdots,x_{i-1}^{k+1},x_i,x_{i+1}^k,\cdots,x_N^k,\lambda^k) + \frac{1}{2}\left\| x_i - x_i^k\right\|^2_{H_i}$ for some positive definite matrix $H_i$, $\ i=1,\ldots,N-1$
    \STATE[Step 2] $x^{k+1}_N := \argmin_{x_N} \ U(x_1^{k+1},\cdots,x_{N-1}^{k+1}, x_N,\lambda^k, x_N^k)$
    \STATE[Step 3] $\lambda^{k+1}  := \lambda^k - \beta\left( \sum_{i=1}^N A_i x_i^{k+1} - b\right)$
\ENDFOR
\end{algorithmic}
\end{algorithm}\\
In Algorithm~\ref{alg:ladm}, $U(x_1,\cdots,x_{N-1},x_N,\lambda,\bar{x})$ is defined as
\begin{eqnarray*}
U(x_1,\cdots,x_{N-1},x_N,\lambda,\bar{x}) & = & f(x_1, \cdots, x_{N-1}, \bar{x}) + \left( x_N - \bar{x} \right)^\top\nabla_N f(x_1, \cdots, x_{N-1}, \bar{x}) \\ && + \frac{L}{2}\left\| x_N - \bar{x} \right\|^2 - \left\langle\lambda, \sum_{i=1}^N A_i x_i - b\right\rangle + \frac{\beta}{2}\left\| \sum_{i=1}^N A_i x_i - b \right\|^2.
\end{eqnarray*}
Moreover, $\beta$ can be chosen  as
\be\label{beta-lower-bound-proximal ADMM-m}
\beta > \max\left\{ \frac{18L}{\sigma_N}, \; \max\limits_{1\leq i\leq N-1}\left\{  \frac{6L^2}{\sigma_N\sigma_{\min}(H_i)}\right\} \right\}.
\ee
to guarantee the convergence rate of the algorithm shown in Lemma \ref{lemma:monotonicity} and Theorem \ref{Thm:proximal ADMM-m-complexity}.


It is worth noting that the proximal ADMM-m and proximal ADMM-g differ only in Step 2: Step 2 of proximal ADMM-g takes a gradient step of the augmented Lagrangian function with respect to $x_N$, while Step 2 of proximal ADMM-m requires to minimize a quadratic function of $x_N$.

We provide some lemmas that are useful in analyzing the iteration complexity of proximal ADMM-m for solving \eqref{noncvx-block-opt}.
\begin{lemma}
Suppose the sequence $\{(x_1^k,\cdots,x_N^k,\lambda^k)\}$ is generated by Algorithm \ref{alg:ladm}. The following inequality holds
\begin{equation}\label{lambda-gap-bound-II}
\left\| \lambda^{k+1} - \lambda^k \right\|^2 \leq \frac{3L^2}{\sigma_N}\left\| x_N^k - x_N^{k+1} \right\|^2 + \frac{6L^2}{\sigma_N}\left\|  x_N^{k-1} - x_N^k \right\|^2 + \frac{3 L^2}{\sigma_N}\sum_{i=1}^{N-1}\left\| x_i^k - x_i^{k+1} \right\|^2.
\end{equation}
\end{lemma}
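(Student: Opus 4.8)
The plan is to mirror the argument that produced \eqref{lambda-norm-bound-I} for proximal ADMM-g, but now exploiting the full-row-rank hypothesis on $A_N$ to pass from $A_N^\top\lambda$ back to $\lambda$ itself. First I would write down the first-order optimality condition for the $x_N$-subproblem in Step 2 of Algorithm \ref{alg:ladm}. Since $U(x_1^{k+1},\cdots,x_{N-1}^{k+1},\cdot,\lambda^k,x_N^k)$ is smooth and is minimized over all of $\RR^{n_N}$, setting its gradient in $x_N$ to zero at $x_N^{k+1}$ gives
\[
\nabla_N f(x_1^{k+1},\cdots,x_{N-1}^{k+1},x_N^k) + L(x_N^{k+1}-x_N^k) - A_N^\top\lambda^k + \beta A_N^\top\left(\sum_{i=1}^N A_i x_i^{k+1} - b\right) = 0.
\]
Substituting the multiplier update from Step 3, namely $\beta\left(\sum_{i=1}^N A_i x_i^{k+1} - b\right) = \lambda^k - \lambda^{k+1}$, the terms in $A_N^\top\lambda^k$ cancel and this collapses to the clean identity
\[
A_N^\top\lambda^{k+1} = \nabla_N f(x_1^{k+1},\cdots,x_{N-1}^{k+1},x_N^k) + L(x_N^{k+1}-x_N^k),
\]
which is the proximal ADMM-m analogue of \eqref{lambda-xN-I}.

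Next I would write the same identity one iteration earlier, obtaining $A_N^\top\lambda^k = \nabla_N f(x_1^k,\cdots,x_{N-1}^k,x_N^{k-1}) + L(x_N^k-x_N^{k-1})$, and subtract. This yields
\[
A_N^\top(\lambda^{k+1}-\lambda^k) = \left[\nabla_N f(x_1^{k+1},\cdots,x_{N-1}^{k+1},x_N^k) - \nabla_N f(x_1^k,\cdots,x_{N-1}^k,x_N^{k-1})\right] + L(x_N^{k+1}-x_N^k) - L(x_N^k-x_N^{k-1}).
\]
Applying the elementary bound $\|a+b+c\|^2\le 3(\|a\|^2+\|b\|^2+\|c\|^2)$ and then \eqref{f-Lipschitz} to the gradient-difference term (using that the block component $\nabla_N f$ has norm no larger than the full gradient difference, and that the two arguments differ in blocks $1,\ldots,N-1$ at levels $k+1$ versus $k$ and in block $N$ at levels $k$ versus $k-1$) gives
\[
\|A_N^\top(\lambda^{k+1}-\lambda^k)\|^2 \le 3L^2\|x_N^k - x_N^{k+1}\|^2 + 6L^2\|x_N^{k-1}-x_N^k\|^2 + 3L^2\sum_{i=1}^{N-1}\|x_i^k - x_i^{k+1}\|^2,
\]
where the coefficient $6L^2$ on $\|x_N^{k-1}-x_N^k\|^2$ arises by combining the $L^2$ contribution coming out of the Lipschitz estimate with the separate $L^2$ contribution from the term $L(x_N^k-x_N^{k-1})$.

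Finally, the only genuinely new ingredient relative to the $A_N=I$ case is converting this bound on $A_N^\top(\lambda^{k+1}-\lambda^k)$ into a bound on $\lambda^{k+1}-\lambda^k$. Since $A_N$ has full row rank, $A_N A_N^\top$ is positive definite with smallest eigenvalue $\sigma_N>0$, so $\|A_N^\top v\|^2 = v^\top A_N A_N^\top v \ge \sigma_N\|v\|^2$ for every $v\in\RR^m$; taking $v=\lambda^{k+1}-\lambda^k$ gives $\|\lambda^{k+1}-\lambda^k\|^2\le \frac{1}{\sigma_N}\|A_N^\top(\lambda^{k+1}-\lambda^k)\|^2$. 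Dividing the previous display by $\sigma_N$ then yields exactly \eqref{lambda-gap-bound-II}. I do not expect a serious obstacle: the derivation is essentially bookkeeping, and the two points to get right are the clean cancellation in the optimality condition after inserting Step 3, and the correct eigenvalue inequality supplied by the full-row-rank hypothesis, which is precisely what replaces the trivial identity used when $A_N=I$.
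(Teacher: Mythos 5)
Your proposal is correct and follows essentially the same route as the paper's own proof: derive the identity $A_N^\top\lambda^{k+1}=\nabla_N f(x_1^{k+1},\cdots,x_{N-1}^{k+1},x_N^k)+L(x_N^{k+1}-x_N^k)$ from the Step 2 optimality condition combined with the Step 3 update, subtract consecutive iterations, apply $\|a+b+c\|^2\le 3(\|a\|^2+\|b\|^2+\|c\|^2)$ together with the Lipschitz bound \eqref{f-Lipschitz}, and convert $\|A_N^\top(\lambda^{k+1}-\lambda^k)\|^2$ to $\sigma_N\|\lambda^{k+1}-\lambda^k\|^2$ via the smallest eigenvalue of $A_NA_N^\top$. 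All constants come out exactly as in \eqref{lambda-gap-bound-II}.
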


\begin{proof}
From the optimality conditions of Step 2 of Algorithm \ref{alg:ladm}, we have
\begin{eqnarray*}
0 & = & \nabla_N f(x_1^{k+1}, \cdots, x_{N-1}^{k+1}, x_N^k) - A_N^\top\lambda^k + \beta A_N^\top\left( \sum_{i=1}^N A_i x_i^{k+1} - b \right) - L\left( x_N^k - x_N^{k+1}\right)\nonumber \\
& = & \nabla_N f(x_1^{k+1}, \cdots, x_{N-1}^{k+1}, x_N^k) - A_N^\top\lambda^{k+1} - L\left( x_N^k - x_N^{k+1}\right), \label{proximal ADMM-m-Lambda-Formula}
\end{eqnarray*}
where the second equality is due to Step 3 of Algorithm \ref{alg:ladm}. Therefore, we have
\begin{eqnarray*}
   &  & \| \lambda^{k+1} - \lambda^k \|^2 \leq  \sigma_N^{-1}\|A_N^\top\lambda^{k+1} - A_N^\top\lambda^k \|^2\\
&\leq & \sigma_N^{-1}\| (\nabla_N f(x_1^{k+1}, \cdots, x_{N-1}^{k+1}, x_N^k) - \nabla_N f(x_1^k, \cdots, x_{N-1}^k, x_N^{k-1}) ) - L(x_N^k - x_N^{k+1}) + L( x_N^{k-1} - x_N^k)\|^2 \\
&\leq & \frac{3}{\sigma_N}\| \nabla_N f(x_1^{k+1}, \cdots, x_{N-1}^{k+1}, x_N^k) - \nabla_N f(x_1^k, \cdots, x_{N-1}^k, x_N^{k-1}) \|^2 + \frac{3L^2}{\sigma_N} (\| x_N^k - x_N^{k+1} \|^2 + \| x_N^{k-1} - x_N^k\|^2) \\
&\leq & \frac{3L^2}{\sigma_N}\| x_N^k - x_N^{k+1} \|^2 + \frac{6L^2}{\sigma_N}\|  x_N^{k-1} - x_N^k \|^2 + \frac{3 L^2}{\sigma_N}\sum_{i=1}^{N-1}\| x_i^k - x_i^{k+1} \|^2.
\end{eqnarray*}
\end{proof}

We define the following function that will be used in the analysis of proximal ADMM-m:
\[
\Psi_L\left(x_1, \cdots, x_N, \lambda, \bar{x}\right) = \LCal_\beta(x_1, \cdots, x_N, \lambda) + \frac{6L^2}{\beta\sigma_N}\left\| x_N - \bar{x}\right\|^2.
\]
Similar to the function used in proximal ADMM-g, we can prove the monotonicity and boundedness of function $\Psi_L$.

\begin{lemma}\label{lemma:monotonicity}
Suppose the sequence $\{(x_1^k,\cdots,x_N^k,\lambda^k)\}$ is generated by Algorithm \ref{alg:ladm}, where$\beta$ is chosen according to \eqref{beta-lower-bound-proximal ADMM-m}.
Then $\Psi_L(x^{k+1},\cdots,x_N^{k+1},\lambda^{k+1},x_N^k)$ monotonically decreases over $k>0$.
\end{lemma}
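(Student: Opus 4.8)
The plan is to mirror the three-step template already used in the proof of Lemma~\ref{lemma:proximal ADMM-g-monotonicity}: first establish a one-iteration sufficient-decrease estimate for the augmented Lagrangian $\LCal_\beta$ by accounting separately for Steps 1--3 of Algorithm~\ref{alg:ladm}, and then absorb the unavoidable lagged term $\|x_N^{k-1}-x_N^k\|^2$ into the quadratic correction $\frac{6L^2}{\beta\sigma_N}\|x_N-\bar x\|^2$ that defines $\Psi_L$. The only genuinely new ingredient relative to proximal ADMM-g is the analysis of Step~2, where $x_N^{k+1}$ is produced by minimizing the majorant $U$ rather than by a gradient step.

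For Step~1 I would reuse the block-minimization estimate verbatim: since each $x_i^{k+1}$ minimizes $\LCal_\beta(\cdots)+\frac12\|x_i-x_i^k\|_{H_i}^2$, strong convexity of the proximal term gives
\[
\LCal_\beta(x_1^{k+1},\cdots,x_{N-1}^{k+1},x_N^k,\lambda^k)\le \LCal_\beta(x_1^k,\cdots,x_N^k,\lambda^k)-\sum_{i=1}^{N-1}\tfrac12\|x_i^k-x_i^{k+1}\|_{H_i}^2.
\]
The crux is Step~2. By the descent lemma \eqref{decent-lemma}, the quadratic model $U(x_1^{k+1},\cdots,x_{N-1}^{k+1},x_N,\lambda^k,x_N^k)$ dominates the $x_N$-dependent part of $\LCal_\beta(x_1^{k+1},\cdots,x_{N-1}^{k+1},x_N,\lambda^k)$ for every $x_N$ and agrees with it at $x_N=x_N^k$ (the regularizers $\sum_i r_i(x_i^{k+1})$ are constant in $x_N$ and harmless). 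Moreover $U$ is strongly convex in $x_N$ with modulus at least $L$, since its Hessian equals $LI+\beta A_N^\top A_N\succeq LI$ (note $A_N^\top A_N$ may be singular, so only $L$ is guaranteed). As $x_N^{k+1}$ is its global minimizer, evaluating the strong-convexity inequality at $x_N^k$ and chaining the majorization yields the clean decrease
\[
\LCal_\beta(x_1^{k+1},\cdots,x_N^{k+1},\lambda^k)\le \LCal_\beta(x_1^{k+1},\cdots,x_{N-1}^{k+1},x_N^k,\lambda^k)-\tfrac{L}{2}\|x_N^k-x_N^{k+1}\|^2 .
\]
Finally, Step~3 contributes the exact identity $\LCal_\beta(\cdots,\lambda^{k+1})=\LCal_\beta(\cdots,\lambda^k)+\frac1\beta\|\lambda^k-\lambda^{k+1}\|^2$, exactly as in the proximal ADMM-g proof.

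To finish, I would sum the three estimates and bound the multiplier increment by the already-established inequality \eqref{lambda-gap-bound-II}, producing a descent bound for $\LCal_\beta$ whose right-hand side involves $\|x_N^k-x_N^{k+1}\|^2$, the lagged $\|x_N^{k-1}-x_N^k\|^2$, and the blockwise terms $(x_i^k-x_i^{k+1})^\top(\frac{3L^2}{\beta\sigma_N}I-\frac12 H_i)(x_i^k-x_i^{k+1})$. Passing to $\Psi_L$ telescopes its quadratic correction and cancels the lagged term precisely---this is exactly why the coefficient $\frac{6L^2}{\beta\sigma_N}$ is chosen as it is---leaving
\[
\Psi_L(x_1^{k+1},\cdots,x_N^{k+1},\lambda^{k+1},x_N^k)-\Psi_L(x_1^k,\cdots,x_N^k,\lambda^k,x_N^{k-1})\le \Bigl(-\tfrac{L}{2}+\tfrac{9L^2}{\beta\sigma_N}\Bigr)\|x_N^k-x_N^{k+1}\|^2-\sum_{i=1}^{N-1}\|x_i^k-x_i^{k+1}\|_{\frac12 H_i-\frac{3L^2}{\beta\sigma_N}I}^2 .
\]
It then remains to check that \eqref{beta-lower-bound-proximal ADMM-m} makes both coefficients strictly negative: $\beta>18L/\sigma_N$ forces $-\frac{L}{2}+\frac{9L^2}{\beta\sigma_N}<0$, while $\beta>\frac{6L^2}{\sigma_N\sigma_{\min}(H_i)}$ forces $\frac12 H_i\succ\frac{3L^2}{\beta\sigma_N}I$, giving strict decrease of $\Psi_L$. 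I expect the only delicate point to be Step~2: one must justify the majorization/strong-convexity argument carefully---in particular that modulus $L$ is available despite the possible singularity of $A_N^\top A_N$---so that the $x_N$ decrease coefficient is exactly $-\frac{L}{2}$, since any looser bound there would fail to be offset by the $\Psi_L$ correction under \eqref{beta-lower-bound-proximal ADMM-m}.
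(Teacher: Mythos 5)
Your proposal is correct and follows essentially the same route as the paper's proof: the same Step-1 block estimate, the same dual-update identity combined with \eqref{lambda-gap-bound-II}, and the same final coefficients $\frac{9L^2}{\beta\sigma_N}-\frac{L}{2}$ and $\frac{3L^2}{\beta\sigma_N}-\frac{\sigma_{\min}(H_i)}{2}$ checked against \eqref{beta-lower-bound-proximal ADMM-m}. The only (harmless) variation is in Step 2, where you obtain the $-\frac{L}{2}\|x_N^k-x_N^{k+1}\|^2$ decrease via majorization plus $L$-strong convexity of $U$ in $x_N$, whereas the paper derives the same inequality from the first-order optimality condition together with \eqref{decent-lemma} and \eqref{triangle-identity}.
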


\begin{proof}
By Step 1 of Algorithm \ref{alg:ladm} one observes that
\be\label{ladm-monotone-proof-1}
\LCal_\beta\left(x_1^{k+1}, \cdots, x_{N-1}^{k+1}, x_N^k,\lambda^k\right) \leq \LCal_\beta\left(x_1^k, \cdots, x_N^k,\lambda^k\right) - \sum_{i=1}^{N-1} \frac{1}{2}\left\| x_i^k - x_i^{k+1}\right\|^2_{H_i},
\ee
while by Step 2 of Algorithm \ref{alg:ladm} we have
\begin{eqnarray}\label{ladm-monotone-proof-2}
\ba{lll}
0 & = & \left(x_N^k - x_N^{k+1}\right)^\top\left[ \nabla_N f(x_1^{k+1}, \cdots, x_{N-1}^{k+1}, x_N^k) - {A_N}^\top\lambda^k \right. \\
&& \left. + \beta {A_N}^\top\left( \sum_{i=1}^N A_i x_i^{k+1} - b \right) - L\left( x_N^k - x_N^{k+1}\right) \right] \\
& \leq & f(x_1^{k+1},\cdots, x_{N-1}^{k+1}, x_N^k) - f(x_1^{k+1},\cdots, x_N^{k+1}) - \frac{L}{2}\left\| x_N^k - x_N^{k+1} \right\|^2 \\
 && - \left( \sum_{i=1}^{N-1} A_i x_i^{k+1} + A_N x_N^k -b\right)^\top\lambda^k + \left( \sum_{i=1}^N A_i x_i^{k+1} - b\right)^\top\lambda^k  \\
& &  + \frac{\beta}{2}\left\| \sum_{i=1}^{N-1} A_i x_i^{k+1} + A_N x_N^k -b\right\|^2 - \frac{\beta}{2}\left\| \sum_{i=1}^N A_i x_i^{k+1} - b\right\|^2 \\
& & - \frac{\beta}{2}\left\| A_N x_N^k - A_N x_N^{k+1}\right\|^2 \\
& \leq & \LCal_{\beta}( x_1^{k+1}, \cdots, x_{N-1}^{k+1}, x_N^k, \lambda^k) - \LCal_{\beta}(x_1^{k+1}, \cdots, x_N^{k+1}, \lambda^k) - \frac{L}{2}\left\| x_N^k - x_N^{k+1}\right\|^2,
\ea
\end{eqnarray}
where the first inequality is due to \eqref{decent-lemma} and \eqref{triangle-identity}.
Moreover, from \eqref{lambda-gap-bound-II} we have
\begin{eqnarray}\label{ladm-monotone-proof-3}
& & \LCal_\beta(x_1^{k+1}, \cdots, x_N^{k+1}, \lambda^{k+1}) - \LCal_\beta( x_1^{k+1}, \cdots, x_N^{k+1}, \lambda^k) = \frac{1}{\beta}\|\lambda^k - \lambda^{k+1}\|^2  \\
& \leq & \frac{3L^2}{\beta\sigma_N}\| x_N^k - x_N^{k+1}\|^2 + \frac{6L^2}{\beta\sigma_N}\| x_N^{k-1} - x_N^k \|^2 + \frac{3L^2}{\beta\sigma_N}\sum_{i=1}^{N-1}\| x_i^k - x_i^{k+1} \|^2.\nonumber
\end{eqnarray}
Combining \eqref{ladm-monotone-proof-1}, \eqref{ladm-monotone-proof-2} and \eqref{ladm-monotone-proof-3} yields that
\begin{eqnarray*}
& & \LCal_\beta(x_1^{k+1}, \cdots, x_N^{k+1}, \lambda^{k+1}) - \LCal_\beta(x_1^k, \cdots, x_N^k, \lambda^k) \\
& \leq & \left(\frac{3L^2}{\beta\sigma_N} -\frac{L}{2} \right)\| x_N^k - x_N^{k+1}\|^2 + \sum_{i=1}^{N-1}\|x_i^k - x_i^{k+1}\|^2_{ \frac{3L^2}{\beta\sigma_N}I - \frac{1}{2}H_i } + \frac{6L^2}{\beta\sigma_N}\|x_N^{k-1} - x_N^k\|^2,
\end{eqnarray*}
which further implies that
\begin{eqnarray}\label{proximal ADMM-m-bound-potential-function}
& & \Psi_L(x_1^{k+1}, \cdots, x_N^{k+1}, \lambda^{k+1}, x_N^k) - \Psi_L(x_1^k, \cdots, x_N^k, \lambda^k, x_N^{k-1}) \\
& \leq & \left( \frac{9L^2}{\beta\sigma_N} - \frac{L}{2}\right)\left\| x_N^k - x_N^{k+1}\right\|^2 + \sum_{i=1}^{N-1}\left( \frac{3L^2}{\beta\sigma_N} - \frac{\sigma_{\min}(H_i)}{2}\right)\left\| x_i^k - x_i^{k+1} \right\|^2 < 0,\nonumber
\end{eqnarray}
where the second inequality is due to \eqref{beta-lower-bound-proximal ADMM-m}. This completes the proof.
\end{proof}


The following lemma shows that the function $\Psi_L$ is lower bounded.
\begin{lemma}\label{proximal ADMM-m-lemma:lower-bound}
Suppose the sequence $\{(x_1^k,\cdots,x_N^k,\lambda^k)\}$ is generated by Algorithm \ref{alg:ladm}. Under the same conditions as in Lemma \ref{lemma:monotonicity}, the sequence $\{\Psi_L(x^{k+1},\cdots,x_N^{k+1}, \lambda^{k+1}, x_N^k)\}$ is bounded from below.
\end{lemma}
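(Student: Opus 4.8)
The plan is to follow the template of Lemma~\ref{lemma:lower-bound-I} used for proximal ADMM-g, and in fact to prove the stronger statement that $\Psi_L(x_1^{k+1},\cdots,x_N^{k+1},\lambda^{k+1},x_N^k)\ge \sum_{i=1}^{N-1}r_i^*+f^*$ for every $k$, which immediately gives the claimed lower boundedness. Writing $x^{k+1}=(x_1^{k+1},\cdots,x_N^{k+1})$ and $r^{k+1}:=\sum_{i=1}^N A_i x_i^{k+1}-b$, I would start from the identity
\[
\LCal_\beta(x^{k+1},\lambda^{k+1}) = \sum_{i=1}^{N-1}r_i(x_i^{k+1}) + f(x^{k+1}) - \langle \lambda^{k+1}, r^{k+1}\rangle + \frac{\beta}{2}\|r^{k+1}\|^2,
\]
so that, using $r_i(x_i^{k+1})\ge r_i^*$, the whole task reduces to lower bounding $f(x^{k+1})-\langle\lambda^{k+1},r^{k+1}\rangle+\frac{\beta}{2}\|r^{k+1}\|^2$.

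The difficulty, relative to proximal ADMM-g, is that $A_N$ is now a general full-row-rank matrix rather than the identity, so $\lambda^{k+1}$ is not itself a gradient-type quantity and only $A_N^\top\lambda^{k+1}$ is controlled. I would exploit that $r^{k+1}\in\RR^m=\mathrm{range}(A_N)$: choose $s^{k+1}$ to be the minimum-norm solution of $A_N s = r^{k+1}$, so that $\|s^{k+1}\|^2\le \sigma_N^{-1}\|r^{k+1}\|^2$ and $\langle\lambda^{k+1},r^{k+1}\rangle=\langle A_N^\top\lambda^{k+1},s^{k+1}\rangle$. The $x_N$-optimality relation $A_N^\top\lambda^{k+1}=\nabla_N f(x_1^{k+1},\cdots,x_{N-1}^{k+1},x_N^k)-L(x_N^k-x_N^{k+1})$ established in the proof of the previous lemma then lets me replace $A_N^\top\lambda^{k+1}$; after adding and subtracting the full gradient $\nabla_N f(x^{k+1})$, the pairing splits into a main term $f(x^{k+1})-\langle\nabla_N f(x^{k+1}),s^{k+1}\rangle$ plus two error terms carrying $\nabla_N f(\cdots,x_N^k)-\nabla_N f(x^{k+1})$ and $L(x_N^{k+1}-x_N^k)$.

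For the main term, set $\hat x_N^{k+1}:=x_N^{k+1}-s^{k+1}$, which satisfies $A_N\hat x_N^{k+1}=b-\sum_{i<N}A_i x_i^{k+1}$. Applying the descent inequality \eqref{decent-lemma} with base point $x^{k+1}$ and evaluation point $\hat x_N^{k+1}$ gives $f(x^{k+1})-\langle\nabla_N f(x^{k+1}),s^{k+1}\rangle\ge f(x_1^{k+1},\cdots,x_{N-1}^{k+1},\hat x_N^{k+1})-\frac{L}{2}\|s^{k+1}\|^2\ge f^*-\frac{L}{2\sigma_N}\|r^{k+1}\|^2$, where the last step uses Assumption~\ref{value-lower-bound} and $\|s^{k+1}\|^2\le\sigma_N^{-1}\|r^{k+1}\|^2$. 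The error terms are bounded by Cauchy--Schwarz together with $\|\nabla_N f(\cdots,x_N^k)-\nabla_N f(x^{k+1})\|\le L\|x_N^k-x_N^{k+1}\|$ and $\|s^{k+1}\|\le\sigma_N^{-1/2}\|r^{k+1}\|$, and then split by Young's inequality (with weights chosen to yield exactly a $-\frac{\beta}{6}\|r^{k+1}\|^2$ piece and a $-\frac{6L^2}{\beta\sigma_N}\|x_N^k-x_N^{k+1}\|^2$ piece). Collecting terms, the coefficient of $\|r^{k+1}\|^2$ becomes $\frac{\beta}{2}-\frac{\beta}{6}-\frac{L}{2\sigma_N}\ge 0$ by \eqref{beta-lower-bound-proximal ADMM-m}, so that term is dropped, leaving
\[
\LCal_\beta(x^{k+1},\lambda^{k+1})\ge \sum_{i=1}^{N-1}r_i^*+f^*-\frac{6L^2}{\beta\sigma_N}\|x_N^k-x_N^{k+1}\|^2,
\]
and adding the penalty $\frac{6L^2}{\beta\sigma_N}\|x_N^{k+1}-x_N^k\|^2$ from the definition of $\Psi_L$ cancels the residual term exactly, giving $\Psi_L\ge\sum_{i=1}^{N-1}r_i^*+f^*$.

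The main obstacle is precisely the handling of $-\langle\lambda^{k+1},r^{k+1}\rangle$ for general $A_N$: the range/minimum-norm device is what converts the pairing into one against the controlled quantity $A_N^\top\lambda^{k+1}$ while keeping $\|s^{k+1}\|$ proportional to $\|r^{k+1}\|$ through $\sigma_N$, after which the argument parallels Lemma~\ref{lemma:lower-bound-I}. The only quantitative subtlety is to calibrate the Young's-inequality weights so that the leftover $\|x_N^k-x_N^{k+1}\|^2$ coefficient matches the $\frac{6L^2}{\beta\sigma_N}$ built into $\Psi_L$; this matching is what forces the cancellation and delivers the clean constant lower bound.
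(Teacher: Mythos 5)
Your proof is correct, but it takes a genuinely different route from the paper's. The paper never establishes a pointwise bound of the form $\Psi_L \ge \sum_{i=1}^{N-1}r_i^* + f^*$: instead it substitutes $\sum_{i}A_ix_i^{k+1}-b=\frac{1}{\beta}(\lambda^k-\lambda^{k+1})$ from Step 3 into the cross term, applies the identity \eqref{triangle-identity} to rewrite $-\langle\lambda^{k+1},\sum_iA_ix_i^{k+1}-b\rangle+\frac{\beta}{2}\|\sum_iA_ix_i^{k+1}-b\|^2$ as the telescoping quantity $-\frac{1}{2\beta}\|\lambda^k\|^2+\frac{1}{2\beta}\|\lambda^{k+1}\|^2+\frac{1}{\beta}\|\lambda^k-\lambda^{k+1}\|^2$, sums over $k$ to show the Ces\`aro averages of $\Psi_L$ are bounded below by $\sum_{i=1}^{N-1}r_i^*+f^*-\frac{1}{2\beta}\|\lambda^0\|^2$, and then invokes the monotonicity of Lemma \ref{lemma:monotonicity}: a decreasing sequence whose averages are bounded below cannot diverge to $-\infty$. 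Your argument instead generalizes the proof of Lemma \ref{lemma:lower-bound-I} directly, using the minimum-norm preimage $s^{k+1}=A_N^\top(A_NA_N^\top)^{-1}r^{k+1}$ (so that $\|s^{k+1}\|^2\le\sigma_N^{-1}\|r^{k+1}\|^2$) to convert $\langle\lambda^{k+1},r^{k+1}\rangle$ into a pairing against the controlled quantity $A_N^\top\lambda^{k+1}=\nabla_Nf(x_1^{k+1},\cdots,x_{N-1}^{k+1},x_N^k)-L(x_N^k-x_N^{k+1})$; I checked the constants and they work out: $\frac{\beta}{2}-\frac{\beta}{6}-\frac{L}{2\sigma_N}\ge 0$ under \eqref{beta-lower-bound-proximal ADMM-m}, and the leftover $\frac{6L^2}{\beta\sigma_N}\|x_N^k-x_N^{k+1}\|^2$ is exactly absorbed by the extra term in the definition of $\Psi_L$. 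What your approach buys is a stronger, explicit, per-iteration lower bound that is independent of $\lambda^0$ and does not rely on Lemma \ref{lemma:monotonicity} at all; what the paper's approach buys is brevity and the avoidance of any pseudoinverse manipulation, at the price of a weaker implicit bound (depending on $\|\lambda^0\|$) and a logical dependence on the monotonicity lemma.
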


\begin{proof}
From Step 3 of Algorithm \ref{alg:ladm} we have
\be \label{coercive-bound}
\ba{ll}
 & \Psi_L(x_1^{k+1}, \cdots, x_N^{k+1}, \lambda^{k+1}, x_N^k) \geq \LCal_\beta(x_1^{k+1}, \cdots, x_N^{k+1}, \lambda^{k+1})  \\
 = &  \sum_{i=1}^{N-1} r_i(x_i^{k+1}) + f(x_1^{k+1}, \cdots, x_N^{k+1}) - \left( \sum_{i=1}^N A_i x_i^{k+1} - b \right)^\top\lambda^{k+1} + \frac{\beta}{2}\left\| \sum_{i=1}^N A_i x_i^{k+1} - b\right\|^2  \\
 = & \sum_{i=1}^{N-1} r_i(x_i^{k+1}) + f(x_1^{k+1}, \cdots, x_N^{k+1}) - \frac{1}{\beta}(\lambda^k - \lambda^{k+1})^\top\lambda^{k+1} + \frac{1}{2\beta}\|\lambda^k - \lambda^{k+1} \|^2   \\
 = & \sum_{i=1}^{N-1} r_i(x_i^{k+1}) + f(x_1^{k+1}, \cdots, x_N^{k+1}) - \frac{1}{2\beta}\|\lambda^k\|^2 + \frac{1}{2\beta}\|\lambda^{k+1}\|^2 + \frac{1}{\beta}\|\lambda^k - \lambda^{k+1}\|^2   \\
 \geq & \sum_{i=1}^{N-1} r_i^* + f^* - \frac{1}{2\beta}\|\lambda^k\|^2 + \frac{1}{2\beta}\|\lambda^{k+1}\|^2 ,
\ea
\ee
where the third equality follows from \eqref{triangle-identity}.
Summing this inequality over $k=0,1,\ldots,K-1$ for any integer $K\geq 1$ yields that
\[
\frac{1}{K}\sum_{k=0}^{K-1} \Psi_L\left(x_1^{k+1}, \cdots, x_N^{k+1}, \lambda^{k+1}, x_N^k\right) \geq \sum_{i=1}^{N-1} r_i^* + f^* - \frac{1}{2\beta}\left\|\lambda^0\right\|^2.
\]
Lemma~\ref{lemma:monotonicity} stipulates that
$\{ \Psi_L(x_1^{k+1}, \cdots, x_N^{k+1}, \lambda^{k+1}, x_N^k) \}$ is a monotonically decreasing sequence;
the above inequality thus further implies that the entire sequence is bounded from below.
\end{proof}

We are now ready to give the iteration complexity of proximal ADMM-m, whose proof is similar to that of Theorem \ref{Thm:proximal ADMM-g-complexity}.
\begin{theorem}\label{Thm:proximal ADMM-m-complexity}
Suppose the sequence $\{(x_1^k,\cdots,x_N^k,\lambda^k) \}$ is generated by proximal ADMM-m (Algorithm \ref{alg:ladm}), and $\beta$ satisfies \eqref{beta-lower-bound-proximal ADMM-m}.
Denote
\begin{eqnarray*}
&\kappa_1 := \frac{6L^2}{\beta^2\sigma_N}, \quad \kappa_2 := 4L^2, \quad \kappa_3 := \max\limits_{1\leq i\leq N-1}\left(\diam(\XCal_i)\right)^2, \\
&\kappa_4 := \left( L + \beta\sqrt{N}\max\limits_{1\leq i\leq N}\left[\| A_i \|_2^2\right] + \max\limits_{1\leq i\leq N}\|H_i\|_2 \right)^2 ,
\end{eqnarray*}
and
\be\label{def-tau-proximal ADMM-m}
\tau := \min\left\{ -\left( \frac{9 L^2}{\beta\sigma_N}-\frac{L}{2}\right), \min_{i=1,\ldots,N-1}\left\{ -\left(\frac{3L^2}{\beta\sigma_N} - \frac{\sigma_{\min}(H_i)}{2}\right) \right\}\right\} > 0.
\ee
Then to get an $\epsilon$-stationary solution, the number of iterations that the algorithm runs can be upper bounded by:
\be\label{def-K-proximal ADMM-m} K := \left\{ \begin{array}{ll}
\left\lceil \frac{2\max\{\kappa_1,\kappa_2,\kappa_4\cdot \kappa_3\}}{\tau\,\epsilon^2}(\Psi_L(x_1^1, \cdots, x_N^1, \lambda^1, x_N^0) - \sum_{i=1}^{N-1}r_i^* - f^*) \right\rceil, & \mbox{for {\bf Setting 1}}\\ {\ } \\
\left\lceil \frac{2\max\{\kappa_1,\kappa_2,\kappa_4\}}{\tau\,\epsilon^2}(\Psi_L(x_1^1, \cdots, x_N^1, \lambda^1, x_N^0)  - \sum_{i=1}^{N-1}r_i^* - f^*) \right\rceil, & \mbox{for {\bf Setting 2}}
\end{array}\right.
\ee
and we can further identify one iteration $\hat{k} \in \argmin\limits_{2\le k \le K+1} \sum_{i=1}^N \left(\| x_i^k - x_i^{k+1} \|^2 + \| x_i^{k-1} - x_i^k \|^2\right)$,
such that $(x_1^{\hat k}, \cdots, x_N^{\hat k})$ is an $\epsilon$-stationary solution for \eqref{noncvx-block-opt} with { Lagrange multiplier $\lambda^{\hat k}$ and} $A_N$ being full row rank, {for Settings 1 and 2 respectively.}
\end{theorem}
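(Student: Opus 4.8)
The plan is to follow the template of the proof of Theorem~\ref{Thm:proximal ADMM-g-complexity}, replacing the potential function $\Psi_G$ by $\Psi_L$ and feeding in the algorithm-specific estimates from Lemmas~\ref{lemma:monotonicity} and \ref{proximal ADMM-m-lemma:lower-bound}. Reusing the shorthand $\theta_k$ from \eqref{theta_k}, I would first sum the per-iteration descent inequality \eqref{proximal ADMM-m-bound-potential-function} over $k=1,\ldots,K$. Because \eqref{beta-lower-bound-proximal ADMM-m} makes every coefficient on its right-hand side at most $-\tau$ with $\tau>0$ from \eqref{def-tau-proximal ADMM-m}, this telescopes to
\[
\Psi_L(x_1^{K+1},\cdots,x_N^{K+1},\lambda^{K+1},x_N^K) - \Psi_L(x_1^1,\cdots,x_N^1,\lambda^1,x_N^0) \le -\tau\sum_{k=1}^K\sum_{i=1}^N\left\|x_i^k-x_i^{k+1}\right\|^2.
\]
Invoking the lower bound of Lemma~\ref{proximal ADMM-m-lemma:lower-bound} and averaging over the window $2\le k\le K+1$ exactly as in the proximal ADMM-g analysis then yields $\min_{2\le k\le K+1}\theta_k \le \frac{2}{\tau K}\left(\Psi_L(x_1^1,\cdots,x_N^1,\lambda^1,x_N^0)-\sum_{i=1}^{N-1}r_i^*-f^*\right)$.

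Next I would bound each quantity appearing in the stationarity conditions at the selected index $\hat k$ by a multiple of $\theta_{\hat k}$. For the feasibility residual \eqref{S1-Prob-Opt-3}, Step~3 of Algorithm~\ref{alg:ladm} gives $\|\sum_{i=1}^N A_i x_i^{k+1}-b\|^2=\frac{1}{\beta^2}\|\lambda^{k+1}-\lambda^k\|^2$, and substituting \eqref{lambda-gap-bound-II} (bounding each coefficient by $6L^2/\sigma_N$) produces the bound $\kappa_1\theta_k$. The genuinely new computation is \eqref{S1-Prob-Opt-2}: here the optimality condition of the majorized subproblem in Step~2 of Algorithm~\ref{alg:ladm} gives $A_N^\top\lambda^{k+1}=\nabla_N f(x_1^{k+1},\cdots,x_{N-1}^{k+1},x_N^k)-L(x_N^k-x_N^{k+1})$, so that $\nabla_N f(x_1^{k+1},\cdots,x_N^{k+1})-A_N^\top\lambda^{k+1}$ splits into a Lipschitz increment and an $L$-scaled step, each controlled by $L\|x_N^k-x_N^{k+1}\|$; squaring yields $\kappa_2\theta_k$ with $\kappa_2=4L^2$. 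Since Step~1 of Algorithm~\ref{alg:ladm} coincides with that of Algorithm~\ref{alg:gadm}, the estimates \eqref{proximal ADMM-g-bound-3} and \eqref{proximal ADMM-g-bound-4} carry over verbatim, giving the $\sqrt{\kappa_4\theta_k}$ control of the subgradient distance in \eqref{S3-Prob-Opt-1} for Setting~2 and the $-\sqrt{\kappa_3\kappa_4\theta_k}$ lower bound for the variational inequality \eqref{S2-Prob-Opt-1} in Setting~1.

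Finally I would combine the pieces: demanding $\theta_{\hat k}\le\epsilon^2/\max\{\kappa_1,\kappa_2,\kappa_4\}$ (Setting~2) or $\theta_{\hat k}\le\epsilon^2/\max\{\kappa_1,\kappa_2,\kappa_3\kappa_4\}$ (Setting~1) forces each squared residual to be at most $\epsilon^2$ and each distance or VI residual to be at most $\epsilon$, thereby certifying $(x_1^{\hat k},\cdots,x_N^{\hat k})$ with multiplier $\lambda^{\hat k}$ as an $\epsilon$-stationary solution in the sense of Definition~\ref{r-nonconvex-unconstrained} (resp.\ Definition~\ref{r-nonconvex-constrained}); feeding the bound on $\min_{2\le k\le K+1}\theta_k$ into this requirement and solving for $K$ reproduces \eqref{def-K-proximal ADMM-m}. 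I expect the main obstacle to be the lower-bound step rather than any of these residual estimates: unlike $\Psi_G$, for which Lemma~\ref{lemma:lower-bound-I} gives a clean constant lower bound $\sum_{i=1}^{N-1}r_i^*+f^*$, here Lemma~\ref{proximal ADMM-m-lemma:lower-bound} only certifies boundedness from below (through a telescoping $\|\lambda^k\|$ argument), so some care is needed to justify replacing the true infimum of $\Psi_L$ by $\sum_{i=1}^{N-1}r_i^*+f^*$ in the explicit constant; the $O(1/\epsilon^2)$ order, however, is unaffected, and one must also check that the full-row-rank factor $\sigma_N$ enters $\kappa_1$ and $\tau$ so that $\tau>0$ under \eqref{beta-lower-bound-proximal ADMM-m}.
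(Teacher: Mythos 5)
Your proposal follows essentially the same route as the paper's own proof: telescoping \eqref{proximal ADMM-m-bound-potential-function}, invoking Lemma~\ref{proximal ADMM-m-lemma:lower-bound}, bounding the dual residual via Step~3 and \eqref{lambda-gap-bound-II}, deriving the $4L^2\theta_k$ bound from the Step~2 optimality condition, and reusing \eqref{proximal ADMM-g-bound-3}--\eqref{proximal ADMM-g-bound-4} for the first $N-1$ blocks. Your caveat about the lower bound is well taken and is in fact the one place the paper is slightly loose: its proof states the key estimate in terms of an abstract $\Psi_L^*$ (which Lemma~\ref{proximal ADMM-m-lemma:lower-bound} only guarantees to be $\ge \sum_{i=1}^{N-1}r_i^*+f^*-\tfrac{1}{2\beta}\|\lambda^0\|^2$), while the constant in \eqref{def-K-proximal ADMM-m} is written with $\sum_{i=1}^{N-1}r_i^*+f^*$, which is justified e.g.\ when $\lambda^0=0$.
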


\begin{proof}
By summing \eqref{proximal ADMM-m-bound-potential-function} over $k=1,\ldots,K$, we obtain that
\begin{equation}\label{diff-Psi-L}
\Psi_L(x_1^{K+1}, \cdots, x_N^{K+1}, \lambda^{K+1}, x_N^K) - \Psi_L(x_1^1, \cdots, x_N^1, \lambda^1, x_N^0) \leq -\tau\sum_{k=1}^{K} \sum_{i=1}^N \left\| x_i^k - x_i^{k+1}\right\|^2,
\end{equation}
where $\tau$ is defined in \eqref{def-tau-proximal ADMM-m}. From Lemma \ref{proximal ADMM-m-lemma:lower-bound} we know that there exists a constant $\Psi_L^*$ such that
$\Psi(x_1^{k+1}, \cdots, x_N^{k+1}, \lambda^{k+1}, x_N^k) \geq \Psi_L^*$ holds for any $k\geq 1$.
Therefore,
\begin{eqnarray}\label{S1-bound-Yk-Yk+1}
\min_{2\le k \le K+1} \theta_k \le
 \frac{2}{\tau\,K}\left[ \Psi_L(x_1^1, \cdots, x_N^1, \lambda^1, x_N^0) - \Psi_L^*\right],
\end{eqnarray}
where $\theta_k$ is defined in \eqref{theta_k}, i.e., for $K$ defined as in \eqref{def-K-proximal ADMM-m}, $\theta_{\hat{k}}=O(\epsilon^2)$.

We now give upper bounds to the terms in \eqref{S1-Prob-Opt-2} and \eqref{S1-Prob-Opt-3} through $\theta_k$. Note that Step 2 of Algorithm \ref{alg:ladm} implies that
\begin{eqnarray*}
& & \| A_N^{\top}\lambda^{k+1} - \nabla_N f(x_1^{k+1}, \cdots, x_N^{k+1})\| \\
& \leq &  L \, \|x_N^k - x_N^{k+1}\| + \|  \nabla_N f(x_1^{k+1}, \cdots, x_{N-1}^{k+1}, x_N^k) - \nabla_N f(x_1^{k+1}, \cdots, x_N^{k+1})  \|\\
& \le & 2L\,\| x_N^k - x_N^{k+1}\|,
\end{eqnarray*}
which implies that
\begin{equation}\label{proximal ADMM-m-bound-2}
\| A_N^\top\lambda^{k+1} - \nabla_N f(x_1^{k+1}, \cdots, x_N^{k+1}) \| ^2 \le 4L^2 \theta_k.
\end{equation}

By Step 3 of Algorithm \ref{alg:ladm} and \eqref{lambda-gap-bound-II} we have
\begin{eqnarray}
& & \left\|\sum\limits_{i=1}^{N} A_i x_i^{k+1} - b \right\|^2 = \frac{1}{\beta^2}\| \lambda^{k+1} - \lambda^k \|^2 \label{proximal ADMM-m-bound-1} \\
& \leq & \frac{3L^2}{\beta^2\sigma_N}\left\| x_N^k - x_N^{k+1} \right\|^2 + \frac{6L^2}{\beta^2\sigma_N}\left\|  x_N^{k-1} - x_N^k \right\|^2 + \frac{3 L^2}{\beta^2\sigma_N}\sum_{i=1}^{N-1}\left\| x_i^k - x_i^{k+1} \right\|^2 \nonumber \\
&\leq& \frac{6L^2}{\beta^2\sigma_N}\theta_k.\nonumber
\end{eqnarray}


The remaining proof is to give upper bounds to the terms in \eqref{S2-Prob-Opt-1} and \eqref{S3-Prob-Opt-1}. Since the proof steps are almost the same as Theorem \ref{Thm:proximal ADMM-g-complexity}, we shall only provide the key inequalities below.

{\bf Setting 2.} Under conditions in Setting 2 in Table \ref{table:epsilon-stationary}, the inequality \eqref{proximal ADMM-g-bound-3} becomes
\begin{eqnarray}
& &\dist \left(-\nabla_i f(x_1^{k+1},\cdots, x^{k+1}_N) + A_i^\top {\lambda}^{k+1}, \partial r_i(x_i^{k+1})\right) \nonumber \\
& \le &  \left(L +  \beta\sqrt{N}\max_{1\leq i\leq N}\left[\| A_i \|_2^2\right] + \max_{1\leq i\leq N}\|H_i\|_2\right)\sqrt{\theta_k}.\label{proximal ADMM-m-bound-3}
\end{eqnarray}
By combining \eqref{proximal ADMM-m-bound-3}, \eqref{proximal ADMM-m-bound-2} and \eqref{proximal ADMM-m-bound-1} we conclude that Algorithm \ref{alg:ladm} returns an $\epsilon$-stationary solution for \eqref{noncvx-block-opt} according to Definition \ref{r-nonconvex-unconstrained} under the conditions of Setting 2 in Table \ref{table:epsilon-stationary}.

{\bf Setting 1.} Under conditions in Setting 1 in Table \ref{table:epsilon-stationary}, the inequality \eqref{proximal ADMM-g-bound-4} becomes
\begin{eqnarray}\label{proximal ADMM-m-bound-4}
&  & \left(x_i-x_i^{k+1}\right)^\top\left[g_i+\nabla_i f(x_1^{k+1},\cdots, x^{k+1}_N) - A_i^\top {\lambda}^{k+1} \right]  \\
& \ge & - \left( \beta\sqrt{N}\max_{1\leq i\leq N}\left[\| A_i \|_2^2\right] + L + \max_{1\leq i\leq N}\|H_i\|_2\right)\max\limits_{1\leq i\leq N-1}\left[\diam(\XCal_i)\right] \sqrt{\theta_k}.\nonumber
\end{eqnarray}
By combining \eqref{proximal ADMM-m-bound-4}, \eqref{proximal ADMM-m-bound-2} and \eqref{proximal ADMM-m-bound-1} we conclude that Algorithm \ref{alg:ladm} returns an $\epsilon$-stationary solution for \eqref{noncvx-block-opt} according to Definition \ref{r-nonconvex-constrained} under the conditions of Setting 1 in Table \ref{table:epsilon-stationary}.

\end{proof}

\begin{remark}
In Step 1 of Algorithm \ref{alg:ladm}, we can replace the function\\
$f(x_1^{k+1},\cdots,x_{i-1}^{k+1},x_i,x_{i+1}^k,\cdots,x_N^k)$
by its linearization
$$f(x_1^{k+1},\cdots,x_{i-1}^{k+1},x_i^k,x_{i+1}^k,\cdots,x_N^k) + \left( x_i - x_i^k \right)^\top\nabla_i f(x_1^{k+1},\cdots, x_{i-1}^{k+1},x_i^k,x_{i+1}^k, \cdots, x_{N}^k).$$
Under the same conditions as in Remark \ref{remark:gadm-1},
the same iteration bound follows by slightly modifying the analysis above.
\end{remark}

\section{Extensions}\label{sec:extension}

\subsection{Relaxing the assumption on the last block variable $x_N$}
It is noted that in \eqref{noncvx-block-opt}, we have some restrictions on the last block variable $x_N$, i.e., $r_N\equiv 0$ and $A_N=I$ or is full row rank. In this subsection, we show how to remove these restrictions and consider the more general problem
\be\label{noncvx-block-opt-extension} \ba{ll}
\min &  f(x_1,x_2,\cdots, x_N) + \sum\limits_{i=1}^{N} r_i(x_i) \\
\st  & \sum_{i=1}^N A_i x_i = b, \ i=1,\ldots,N, \ea
\ee
where $x_i\in\RR^{n_i}$ and $A_i\in\RR^{m\times n_i}$, $i=1,\ldots,N$.

{
Before proceeding, we make the following assumption on \eqref{noncvx-block-opt-extension}.
\begin{assumption} \label{extension:assumption}
Denote $n = n_1+\cdots+n_N$. For any compact set $S\subseteq \RR^n$, and any sequence $\lambda^j\in \RR^m$ with $\|\lambda^j\|\rightarrow \infty$, $j=1,2,\ldots$, the following limit
\[
\lim_{j \rightarrow \infty} \dist( -\nabla f(x_1,\cdots,x_N) + A^\top \lambda^j, \sum_{i=1}^N\partial r_i(x_i)) \rightarrow \infty
\]
holds uniformly for all $(x_1,\cdots,x_N)\in S$, where $A = [A_1,\ldots,A_N]$.
\end{assumption}


Remark that the above implies $A$ to have full row-rank. Furthermore, if $f$ is continuously differentiable and $\partial r_i (S) := \bigcup_{x \in S} \partial r_i(x) $ is a compact set for any compact set $S$, and $A$ has full row rank, then Assumption~\ref{extension:assumption} trivially holds. On the other hand,
for popular non-convex regularization functions, such as SCAD, MCP and Capped $\ell_1$-norm, it can be shown that the corresponding set $\partial r_i (S)$ is indeed compact set for any compact set $S$, and so Assumption~\ref{extension:assumption} holds in all these cases.


We introduce the following problem that is closely related to \eqref{noncvx-block-opt-extension}:
\be\label{prob:penalty} \ba{ll}
\min & f(x_1,x_2,\cdots, x_N) + \sum\limits_{i=1}^{N} r_i(x_i)  + \frac{\mu(\epsilon)}{2} \| y \|^2 \\
\st  & \sum_{i=1}^{N}A_{i} x_{i} + y = b, \ i=1,\ldots,N, \ea
\ee
where $\epsilon>0$ is the target tolerance, and $\mu(\epsilon)$ is a function of $\epsilon$ which will be specified later. Now, proximal ADMM-m is ready to be used for solving \eqref{prob:penalty} because $A_{N+1}=I$ and $y$ is unconstrained. We have the following iteration complexity result for proximal ADMM-m to obtain an $\epsilon$-stationary solution of \eqref{noncvx-block-opt-extension}; proximal ADMM-g can be analyzed similarly.

\begin{theorem}\label{Thm:penalty}
Consider problem \eqref{noncvx-block-opt-extension} under Setting 2 in Table \ref{table:epsilon-stationary}. Suppose that Assumption~\ref{extension:assumption} holds, and the objective in \eqref{noncvx-block-opt-extension}, i.e., $f+\sum_{i=1}^Nr_i$, has a bounded level set. Furthermore, suppose that $f$ has a Lipschitz continuous gradient with Lipschitz constant $L$, and $A$ is of full row rank. Now let the sequence $\{(x_1^k,\cdots,x_{N}^k, y^k, \lambda^k)\}$ be generated by proximal ADMM-m for solving \eqref{prob:penalty} with initial iterates $y^0=\lambda^0=0$, and $(x_1^0,\cdots,x_N^0)$ such that $\sum_{i=1}^NA_ix_i^0=b$. Assume that the target tolerance $\epsilon$ satisfies
\be\label{theorem4.2-def-eps}
0<\epsilon < \min\left\{\frac{1}{L}, \frac{1}{6\bar\tau}\right\}, \mbox{ where } \bar\tau = \frac{1}{2}\min_{i=1,\ldots,N}\{\sigma_{\min}(H_i)\}.
\ee
Then in no more than $O(1/\epsilon^4)$ iterations we will reach an iterate $(x_1^{{\hat K}+1}, \cdots, x_{N}^{{\hat K}+1}, y^{{\hat K}+1})$ that is an $\epsilon$-stationary solution for \eqref{prob:penalty} with {Lagrange multiplier $\lambda^{{\hat K}+1}$}. Moreover,
$(x_1^{{\hat K}+1}, \cdots, x_{N}^{{\hat K}+1})$ is an $\epsilon$-stationary solution for
\eqref{noncvx-block-opt-extension} with {Lagrange multiplier $\lambda^{\hat K+1}$}.
\end{theorem}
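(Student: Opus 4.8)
The plan is to treat the penalty reformulation \eqref{prob:penalty} as a genuine instance of \eqref{noncvx-block-opt} to which proximal ADMM-m applies, and then to convert its stationarity certificate back into one for \eqref{noncvx-block-opt-extension}. Concretely, I would regard $(x_1,\dots,x_N)$ as the first $N$ blocks and $y$ as the $(N+1)$-st block: the coupling objective becomes $\tilde f(x_1,\dots,x_N,y):=f(x_1,\dots,x_N)+\frac{\mu(\epsilon)}{2}\|y\|^2$, the nonsmooth terms $r_1,\dots,r_N$ sit on the first $N$ blocks, and the last block $y$ is unconstrained with coefficient $A_{N+1}=I$, which is trivially of full row rank with $\sigma_{N+1}=1$. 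Since $\nabla\tilde f$ is Lipschitz with constant $\tilde L:=\max\{L,\mu(\epsilon)\}$ and, using $\frac{\mu(\epsilon)}{2}\|y\|^2\ge 0$ together with the level-boundedness hypothesis, the objective is bounded below, all hypotheses of Theorem \ref{Thm:proximal ADMM-m-complexity} (Setting 2) are met once $\beta$ obeys \eqref{beta-lower-bound-proximal ADMM-m} with $L$ replaced by $\tilde L$. This immediately yields the first claim: proximal ADMM-m returns an iterate $(x_1^{\hat K+1},\dots,x_N^{\hat K+1},y^{\hat K+1})$ that is an $\epsilon$-stationary solution of \eqref{prob:penalty} in the sense of Definition \ref{r-nonconvex-unconstrained}.

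For the iteration count, I would substitute the $\epsilon$-dependent choice $\mu(\epsilon)=\Theta(1/\epsilon)$ (dictated below) into the $K$ of Theorem \ref{Thm:proximal ADMM-m-complexity} and track powers of $\epsilon$: with $\tilde L=\Theta(1/\epsilon)$, the admissible $\beta$, and hence $\kappa_1,\kappa_2,\kappa_4$ and $\tau$, all inherit polynomial dependence on $1/\epsilon$, while the initial gap is controlled using the prescribed initialization $y^0=\lambda^0=0$ and $\sum_iA_ix_i^0=b$, so that the initial augmented Lagrangian equals the plain objective value, which is $\epsilon$-independent. Carrying this bookkeeping through gives $K=O(1/\epsilon^4)$; the accounting is routine but tedious, and the precise exponent is the technical (as opposed to conceptual) crux of the statement.

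It remains to pass from $\epsilon$-stationarity of \eqref{prob:penalty} to that of \eqref{noncvx-block-opt-extension}. The conditions \eqref{S3-Prob-Opt-1} on $x_1,\dots,x_N$ are identical in the two problems, since $\nabla_i\tilde f=\nabla_i f$ for $i\le N$, so they transfer verbatim. The last-block condition \eqref{S1-Prob-Opt-2} reads $\|\mu(\epsilon)\,y^{\hat K+1}-\lambda^{\hat K+1}\|\le\epsilon$ (because $\nabla_y\tilde f=\mu(\epsilon)y$ and $A_{N+1}=I$), whence $\|y^{\hat K+1}\|\le(\|\lambda^{\hat K+1}\|+\epsilon)/\mu(\epsilon)$; and the penalty feasibility \eqref{S1-Prob-Opt-3}, $\|\sum_iA_ix_i^{\hat K+1}+y^{\hat K+1}-b\|\le\epsilon$, gives by the triangle inequality $\|\sum_iA_ix_i^{\hat K+1}-b\|\le\epsilon+\|y^{\hat K+1}\|$. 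Thus the only thing standing between us and $O(\epsilon)$-feasibility for \eqref{noncvx-block-opt-extension} is an $\epsilon$-independent bound on $\|\lambda^{\hat K+1}\|$, after which choosing $\mu(\epsilon)=\Theta(1/\epsilon)$ large enough forces $\|y^{\hat K+1}\|\le\epsilon$ and the residual below $2\epsilon$ (absorbed by running the penalty solve to tolerance $\epsilon/2$).

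The hard part is precisely this uniform bound on the multiplier, and it is where Assumption \ref{extension:assumption} and level-boundedness do their work. First I would show the primal iterates stay in a fixed compact set $S$: because $\Psi_L$ decreases monotonically and, after isolating the $\|\lambda^k\|^2$ terms exactly as in Lemma \ref{proximal ADMM-m-lemma:lower-bound}, its value dominates $f+\sum_i r_i+\frac{\mu(\epsilon)}{2}\|y\|^2$ up to telescoping multiplier terms, level-boundedness of $f+\sum_i r_i$ confines $(x_1^k,\dots,x_N^k)$ to a compact set independent of $\epsilon$. Stacking the $N$ conditions \eqref{S3-Prob-Opt-1} then yields $\dist(-\nabla f(x^{\hat K+1})+A^\top\lambda^{\hat K+1},\ \textstyle\sum_i\partial r_i(x_i^{\hat K+1}))\le\sqrt{N}\,\epsilon$ with $A=[A_1,\dots,A_N]$. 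Were $\|\lambda^{\hat K+1}\|$ to grow without bound as $\epsilon\downarrow 0$, Assumption \ref{extension:assumption} (applied with this $S$) would force the same distance to tend to infinity, contradicting the bound $\sqrt{N}\,\epsilon\to 0$; hence $\|\lambda^{\hat K+1}\|\le\Lambda$ for a constant $\Lambda$ depending only on the problem data. This closes the argument of the previous paragraph and delivers the $\epsilon$-stationarity of $(x_1^{\hat K+1},\dots,x_N^{\hat K+1})$ for \eqref{noncvx-block-opt-extension}. I expect this multiplier bound to be the main obstacle; the threshold $1/(6\bar\tau)$ on $\epsilon$ in \eqref{theorem4.2-def-eps} is exactly what keeps the proximal terms dominant in the descent estimate underlying the compactness of $S$.
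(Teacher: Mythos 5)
Your overall skeleton (penalize, run proximal ADMM-m on \eqref{prob:penalty}, bound the multiplier via Assumption~\ref{extension:assumption} plus level-boundedness, then transfer the stationarity certificate) matches the paper, and your argument for the uniform bound on $\|\lambda^{\hat K+1}\|$ --- compactness of the primal iterates from level-boundedness, then a contradiction with Assumption~\ref{extension:assumption} --- is essentially the paper's. The gap is in the quantitative core, which you dismiss as ``routine bookkeeping.'' You propose to invoke Theorem~\ref{Thm:proximal ADMM-m-complexity} off the shelf with the inflated Lipschitz constant $\tilde L=\max\{L,\mu(\epsilon)\}=\Theta(1/\epsilon)$. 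But then the hypothesis \eqref{beta-lower-bound-proximal ADMM-m} forces $\beta>\max\{18\tilde L,\,6\tilde L^2/\sigma_{\min}(H_i)\}=\Omega(1/\epsilon^2)$ for fixed $H_i$, the constant $\kappa_4$ in the distance bound \eqref{proximal ADMM-m-bound-3} scales like $\beta^2=\Theta(1/\epsilon^4)$, so reaching $\mathrm{dist}\le\epsilon$ requires $\theta_{\hat K}=O(\epsilon^6)$ and hence $K=O(1/\epsilon^6)$, not $O(1/\epsilon^4)$ --- exactly the degraded rate the paper's Remark after Theorem~\ref{Thm:penalty} associates with the fallback analysis. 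The claimed exponent $4$ does not follow from your route.

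The missing idea is that the last block of \eqref{prob:penalty} is \emph{exactly} quadratic with modulus $\mu(\epsilon)$, so when $\mu(\epsilon)>L$ the majorization in Step 2 of Algorithm~\ref{alg:ladm} is tight and its optimality condition collapses to the identity $\lambda^{k}=\mu(\epsilon)y^{k}$ for all $k\ge1$. This identity replaces the generic dual-increment bound \eqref{lambda-gap-bound-II} by the much sharper $\|\lambda^{k+1}-\lambda^k\|=\mu(\epsilon)\|y^{k+1}-y^k\|$, which (a) permits the choice $\beta(\epsilon)=3\mu(\epsilon)=3/\epsilon$ (violating \eqref{beta-lower-bound-proximal ADMM-m} relative to $\tilde L$, so Theorem~\ref{Thm:proximal ADMM-m-complexity} cannot simply be cited) while keeping the per-iteration descent constant $\bar\tau$ independent of $\epsilon$ --- the $y$-block descent coefficient is $\mu/2-\mu^2/\beta=1/(6\epsilon)$, and the hypothesis $\epsilon<1/(6\bar\tau)$ is precisely what lets $\bar\tau$ dominate it, rather than anything about compactness; (b) makes the extra terms of $\LCal_{\beta(\epsilon)}$ beyond $f+\sum_i r_i$ a manifestly nonnegative quantity, which is what yields both the lower bound $\LCal_{\beta(\epsilon)}\ge\LCal^*$ and the upper bound $f(x^k)+\sum_ir_i(x^k)\le\LCal^0$ needed for your compactness step (your version, which telescopes $\|\lambda^k\|^2$ terms as in Lemma~\ref{proximal ADMM-m-lemma:lower-bound}, leaves an uncontrolled $-\frac{1}{2\beta}\|\lambda^k\|^2$ and risks circularity); and (c) gives the feasibility residual $\|\sum_iA_ix_i^{\hat K+1}+y^{\hat K+1}-b\|=\frac{\mu}{\beta}\|y^{\hat K+1}-y^{\hat K}\|=O(\epsilon^2)$. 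With $\beta=\Theta(1/\epsilon)$ the distance bound carries only a factor $O(1/\epsilon)$, so $\theta_{\hat K}=O(\epsilon^4)$ suffices and $K=O(1/\epsilon^4)$ follows. Without isolating this identity and redoing the descent/lower-bound estimates for the non-admissible $\beta$, the theorem as stated is not proved.
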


\begin{proof}
Denote the penalty parameter as $\beta(\epsilon)$. The augmented Lagrangian function of \eqref{prob:penalty} is given by
\[
\begin{array}{lll}
\LCal_{\beta(\epsilon)}(x_1,\cdots,x_N,y,\lambda)
&:= & f(x_1,\cdots,x_N)+\sum_{i=1}^Nr_i(x_i)+\frac{\mu(\epsilon)}{2}\|y\|^2-\langle\lambda,\sum_{i=1}^NA_ix_i+y-b\rangle \\
&   & + \frac{\beta(\epsilon)}{2}\|\sum_{i=1}^NA_ix_i+y-b\|^2.
\end{array}
\]
Now we set
\be\label{theorem4.2-def-mu-beta}
\mu(\epsilon)=1/\epsilon, \mbox{ and } \beta(\epsilon)=3/\epsilon.
\ee
From \eqref{theorem4.2-def-eps} we have $\mu(\epsilon) > L$. This implies that the Lipschitz constant of the smooth part of the objective of \eqref{prob:penalty} is equal to $\mu(\epsilon)$. Then from the optimality conditions of Step 2 of Algorithm \ref{alg:ladm}, we have $\mu(\epsilon)y^k=\lambda^k, \forall k\geq 1$.

Similar to Lemma \ref{lemma:monotonicity}, we can prove that $\LCal_{\beta(\epsilon)}(x_1^k,\ldots,x_N^k,y^k,\lambda^k)$ monotonically decreases. Specifically, since $\mu(\epsilon)y^k=\lambda^k$, combining \eqref{ladm-monotone-proof-1}, \eqref{ladm-monotone-proof-2} and the equality in \eqref{ladm-monotone-proof-3} yields,
\begin{eqnarray}
&  & \LCal_{\beta(\epsilon)}(x_1^{k+1}, \cdots, x_N^{k+1}, y^{k+1}, \lambda^{k+1}) - \LCal_{\beta(\epsilon)}(x_1^k, \cdots, x_N^k, y^k,\lambda^k) \nonumber \\
&\leq & -\frac{1}{2}\sum_{i=1}^N\|x_i^k-x_i^{k+1}\|_{H_i}^2 - \left(\frac{\mu(\epsilon)}{2}-\frac{\mu(\epsilon)^2}{\beta(\epsilon)}\right)\|y^k-y^{k+1}\|^2 < 0, \label{theorem4.2-monotone}
\end{eqnarray}
where the last inequality is due to \eqref{theorem4.2-def-mu-beta}.

Similar to Lemma \ref{proximal ADMM-m-lemma:lower-bound}, we can prove that $\LCal_{\beta(\epsilon)}(x_1^k,\cdots,x_N^k,y^k,\lambda^k)$ is bounded from below, i.e., the exists a constant $\LCal^*=f^* + \sum_{i=1}^N r_i^*$ such that \[\LCal_{\beta(\epsilon)}(x_1^k,\cdots,x_N^k,y^k,\lambda^k) \ge \LCal^*, \quad \mbox{ for all } k.\]
Actually the following inequalities lead to the above fact:
\begin{eqnarray}
& & \LCal_{\beta(\epsilon)}(x_1^k,\cdots,x_N^k,y^k,\lambda^k) \nonumber\\
&= & f(x_1^k,\cdots,x_N^k) + \sum_{i=1}^N r_i(x_i^k) + \frac{\mu(\epsilon)}{2}\|y^k\|^2-\left\langle \lambda^k, \sum_{i=1}^NA_ix_i^k+y^k-b\right\rangle + \frac{\beta(\epsilon)}{2}\left\|\sum_{i=1}^NA_ix_i^k+y^k-b\right\|^2 \nonumber\\
&= & f(x_1^k,\cdots,x_N^k) + \sum_{i=1}^N r_i(x_i^k) + \frac{\mu(\epsilon)}{2}\|y^k\|^2-\left\langle \mu(\epsilon) y^k, \sum_{i=1}^NA_ix_i^k+y^k-b\right\rangle + \frac{\beta(\epsilon)}{2}\left\|\sum_{i=1}^NA_ix_i^k+y^k-b\right\|^2 \nonumber\\
&\geq & \LCal^* + \mu(\epsilon)\left[\frac{1}{2}\left\|\sum_{i=1}^NA_ix_i^k-b\right\|^2 + \left(\frac{\beta(\epsilon)-\mu(\epsilon)}{2\mu(\epsilon)}\right)\left\|\sum_{i=1}^NA_ix_i^k+y^k-b\right\|^2\right] \geq \LCal^*, \label{theorem4.2-long-inequality}
\end{eqnarray}
where the second equality is from $\mu(\epsilon)y^k=\lambda^k$, and the last inequality is due to \eqref{theorem4.2-def-mu-beta}.
Moreover, denote $\LCal^0\equiv\LCal_{\beta(\epsilon)}(x_1^0,\cdots,x_N^0,y^0,\lambda^0)$, which is a constant independent of $\epsilon$.

Furthermore, for any integer $K\geq 1$, summing \eqref{theorem4.2-monotone} over $k=0,\ldots,K$ yields
\begin{equation}\label{L-sum}
\LCal_{\beta(\epsilon)}(x_1^{K+1}, \cdots, x_N^{K+1}, y^{K+1}, \lambda^{K+1}) - \LCal^0 \leq - \bar{\tau}\sum_{k=0}^K\theta_k,
\end{equation}
where $\theta_k:=\sum_{i=1}^N\|x_i^k-x_i^{k+1}\|^2+ \|y^k-y^{k+1}\|^2$. Note that \eqref{L-sum} and \eqref{theorem4.2-long-inequality} imply that
\begin{equation}\label{theorem4.2-theta_k}
\min_{0\leq k\leq K}\theta_k \leq \frac{1}{\bar{\tau} K}\left(\LCal^0-\LCal^*\right).
\end{equation}
Similar to \eqref{proximal ADMM-g-bound-3}, it can be shown that for $i=1,\ldots,N$,
\begin{eqnarray}\label{theorem4.2-bound-3}
\begin{array}{ll}
&\dist \left(-\nabla_i f(x_1^{k+1},\cdots, x^{k+1}_N) + A_i^\top {\lambda}^{k+1}, \partial r_i(x_i^{k+1})\right) \\
\le &  \left(L +  \beta(\epsilon)\sqrt{N}\max_{1\leq i\leq N} \| A_i \|_2^2 + \max_{1\leq i\leq N} \|H_i\|_2 \right)\sqrt{\theta_k}.
\end{array}
\end{eqnarray}

Set $K= 1/\epsilon^4$ and denote $\hat{K} = \argmin_{0\leq k\leq K}\theta_k$. Then we know $\theta_{\hat{K}} = O(\epsilon^4)$. As a result,
\begin{equation}\label{theorem4.2-bound-1}
\left\|\sum_{i=1}^N A_ix_i^{\hat{K}+1}+y^{\hat{K}+1}-b\right\|^2 = \frac{1}{\beta(\epsilon)^2}\|\lambda^{\hat{K}+1}-\lambda^{\hat{K}}\|^2=\frac{\mu(\epsilon)^2}{\beta(\epsilon)^2}\|y^{\hat{K}+1}-y^{\hat{K}}\|^2\leq \frac{1}{9}\theta_{\hat{K}}=O(\epsilon^4).
\end{equation}
Note that \eqref{theorem4.2-long-inequality} also implies that $f(x_1^k,\cdots,x_N^k)+\sum_{i=1}^Nr_i(x_i^k)$ is upper-bounded by a constant. Thus, from the assumption that the level set of the objective is bounded, we know $(x_1^k,\cdots,x_N^k)$ is bounded. Then Assumption \ref{extension:assumption} implies that $\lambda^k$ bounded, which results in $\|y^k\| = O(\epsilon)$. Therefore, from \eqref{theorem4.2-bound-1} we have
\[\left\|\sum_{i=1}^N A_ix_i^{\hat{K}+1}-b\right\| \leq \left\|\sum_{i=1}^N A_ix_i^{\hat K+1}+y^{\hat K+1}-b\right\| + \left\|y^{\hat K+1}\right\| = O(\epsilon),\]
which combining with \eqref{theorem4.2-bound-3} yields that $(x_1^{\hat K+1}, \cdots, x_N^{\hat K+1})$ is an $\epsilon$-stationary solution for \eqref{noncvx-block-opt-extension} with {Lagrange multiplier $\lambda^{\hat K+1}$}, according to Definition \ref{r-nonconvex-unconstrained}.
\end{proof}

\begin{remark}
Without Assumption \ref{extension:assumption}, we can still provide an iteration complexity of proximal ADMM-m, but the complexity bound is worse than $O(1/\epsilon^4)$. To see this, note that because $\LCal_{\beta(\epsilon)}(x_1^k,\cdots,x_N^k,y^k,\lambda^k)$ monotonically decreases, the first inequality in \eqref{theorem4.2-long-inequality} implies that
\begin{equation}\label{theorem4.2-residue-bounded}
\mu(\epsilon)\frac{1}{2}\left\|\sum_{i=1}^NA_ix_i^k-b\right\|^2 \leq \LCal^0 - \LCal^*, \forall k.
\end{equation}
Therefore, by setting $K=1/\epsilon^6$, $\mu(\epsilon)=1/\epsilon^2$ and $\beta(\epsilon)=3/\epsilon^2$ instead of \eqref{theorem4.2-def-mu-beta}, and combining \eqref{theorem4.2-bound-3} and \eqref{theorem4.2-residue-bounded}, we conclude that $(x_1^{\hat K+1}, \cdots, x_N^{\hat K+1})$ is an $\epsilon$-stationary solution for \eqref{noncvx-block-opt-extension} with {Lagrange multiplier $\lambda^{\hat K+1}$}, according to Definition \ref{r-nonconvex-unconstrained}.
\end{remark}
}

\subsection{Proximal BCD (Block Coordinate Descent)}

In this section, we apply a proximal block coordinate descent method to solve the following variant of \eqref{noncvx-block-opt} and present its iteration complexity:
\be\label{prob:BCD} \ba{ll}
\min & F(x_1,x_2,\cdots,x_N):= f(x_1,x_2,\cdots, x_N)+\sum\limits_{i=1}^{N} r_i(x_i) \\
\st  & \ x_i\in \XCal_i, \ i=1,\ldots,N,\ea
\ee
where $f$ is differentiable, $r_i$ is nonsmooth, and $\XCal_i\subset\RR^{n_i}$ is a closed convex set for $i=1,2,\ldots,N$. Note that $f$ and $r_i$ can be nonconvex functions. Our proximal BCD method for solving \eqref{prob:BCD} is described in Algorithm \ref{alg:bcd}.

\begin{algorithm}[ht]
\caption{A proximal BCD method for solving \eqref{prob:BCD}}
\label{alg:bcd}
\begin{algorithmic}[10]
\REQUIRE {Given $\left(x_1^0,x_2^0,\cdots,x_N^0\right)\in\XCal_1\times\cdots\times\XCal_N$}
\FOR {$k=0,1,\ldots$}
    \STATE Update block $x_i$ in a cyclic order, i.e., for $i=1,\ldots,N$ ($H_i$ positive definite):
    \be\label{BCD-XN-Update}
        x_{i}^{k+1} := \argmin_{x_{i}\in\XCal_{i}} \ F(x_1^{k+1},\cdots,x_{i-1}^{k+1},x_i,x_{i+1}^{k},\cdots,x_{N}^{k}) + \frac{1}{2}\left\|x_{i} - x_{i}^k\right\|^2_{H_i}.
    \ee
\ENDFOR
\end{algorithmic}
\end{algorithm}


Similar to the settings in Table \ref{table:epsilon-stationary}, depending on the properties of $r_i$ and $\XCal_i$, the $\epsilon$-stationary solution for \eqref{prob:BCD} is as follows.
\begin{definition}
$(x_1^*,\ldots,x_N^*,\lambda^*)$ is called an $\epsilon$-stationary solution for \eqref{prob:BCD}, if
\begin{itemize}
\item[(i)] $r_i$ is Lipschitz continuous, $\XCal_i$ is convex and compact, and for any $x_i\in\XCal_i$, $i=1,\ldots,N$, it holds that ($g_i={\partial} r_i(x_i^*)$ denotes a generalized subgradient of $r_i$)
    \[\left(x_i-x_i^*\right)^\top\left[\nabla_i f(x_1^*,\cdots, x^*_N) + g_i \right] \geq - \epsilon;\]
\item[(ii)] or, if $r_i$ is lower semi-continuous, $\XCal_i = \RR^{n_i}$ for $i=1,\ldots,N$, it holds that
    \[\dist\left(-\nabla_i f(x_1^*,\cdots, x^*_N), {\partial} r_i(x_i^*)\right) \leq \epsilon.\]
\end{itemize}
\end{definition}

We now show that the iteration complexity of Algorithm \ref{alg:bcd} can be obtained from that of proximal ADMM-g.
By introducing an auxiliary variable $x_{N+1}$ and an arbitrary vector $b\in\RR^m$, problem \eqref{prob:BCD} can be equivalently rewritten as

\be\label{prob:BCD-equivalent} \ba{ll}
\min & f(x_1,x_2,\cdots, x_N) + \sum\limits_{i=1}^{N} r_i(x_i) \\
\st  & x_{N+1} = b, \ x_i\in \XCal_i, \ i=1,\ldots,N.\ea
\ee
It is easy to see that applying proximal ADMM-g to solve \eqref{prob:BCD-equivalent} (with $x_{N+1}$ being the last block variable) reduces exactly to Algorithm \ref{alg:bcd}. Hence, we have the following iteration complexity result of Algorithm \ref{alg:bcd} for obtaining an $\epsilon$-stationary solution of \eqref{prob:BCD}.


\begin{theorem}\label{Thm:BCD-complexity}
Suppose the sequence $\{(x_1^k,\cdots,x_N^k)\}$ is generated by proximal BCD (Algorithm \ref{alg:bcd}).
Denote
\[
\kappa_5 := ( L + \max\limits_{1\leq i\leq N} \|H_i\|_2  )^2,\,\,
\kappa_6 := \max\limits_{1\leq i\leq N} (\diam(\XCal_i) )^2.
\]
Letting
\[
K := \left\{ \begin{array}{ll}
\left\lceil \frac{\kappa_5\cdot\kappa_6}{\tau\,\epsilon^2}(\Psi_G (x_1^1, \cdots, x_N^1, \lambda^1, x_N^0)  - \sum_{i=1}^{N}r_i^* - f^*) \right\rceil &\mbox{for {\bf Setting 1} } \\ \\
\left\lceil \frac{ \kappa_5}{\tau\,\epsilon^2}(\Psi_G (x_1^1, \cdots, x_N^1, \lambda^1, x_N^0) - \sum_{i=1}^{N}r_i^* - f^*) \right\rceil &\mbox{for \bf{Setting 2}}\\
\end{array}\right.
\]
with $\tau$ being defined in~\eqref{def-tau}, and $\hat{K} := \min\limits_{1\le k \le K} \sum_{i=1}^N \left(\| x_i^k - x_i^{k+1} \|^2 \right)$, we have that
$(x_1^{\hat K}, \cdots, x_N^{\hat K})$ is an $\epsilon$-stationary solution for problem \eqref{prob:BCD}.
\end{theorem}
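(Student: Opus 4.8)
The plan is to prove the result by reduction to Theorem \ref{Thm:proximal ADMM-g-complexity}, exploiting the equivalence between \eqref{prob:BCD} and the artificially constrained reformulation \eqref{prob:BCD-equivalent}. First I would regard \eqref{prob:BCD-equivalent} as an instance of \eqref{noncvx-block-opt} with $N+1$ block variables, where the coupling matrices are $A_i=0$ for $i=1,\ldots,N$ and $A_{N+1}=I$, the right-hand side is the arbitrary vector $b$, and the smooth coupling term $f$ does not depend on the last block $x_{N+1}$. Since $A_{N+1}=I$, proximal ADMM-g (Algorithm \ref{alg:gadm}) applies, and I would initialize it with $x_{N+1}^0=b$ and $\lambda^0=0$.

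The key structural observation I would establish is that this ADMM run collapses to the BCD iteration. Because $\nabla_{N+1}\LCal_\beta(x_1^{k+1},\ldots,x_N^{k+1},x_{N+1}^k,\lambda^k)=-\lambda^k+\beta(x_{N+1}^k-b)$, an easy induction shows that $x_{N+1}^k=b$ and $\lambda^k=0$ for every $k$: Step 2 leaves $x_{N+1}$ unchanged and Step 3 leaves $\lambda$ at zero. Consequently the augmented-Lagrangian and multiplier terms drop out of the Step-1 subproblems, so for $i=1,\ldots,N$ the update $x_i^{k+1}=\argmin_{x_i\in\XCal_i}\{F(\cdots)+\frac12\|x_i-x_i^k\|_{H_i}^2\}$ is precisely \eqref{BCD-XN-Update}. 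Thus the two algorithms generate identical sequences $\{(x_1^k,\ldots,x_N^k)\}$, and along the trajectory the potential function $\Psi_G$ reduces to $F$ (the residual and $\lambda$ vanish, and the extra $\|x_{N+1}^k-x_{N+1}^{k-1}\|^2$ term is zero), so the monotonicity and lower-boundedness required to apply the theorem are inherited directly from Lemmas \ref{lemma:proximal ADMM-g-monotonicity} and \ref{lemma:lower-bound-I}.

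Next I would invoke Theorem \ref{Thm:proximal ADMM-g-complexity} for the $(N+1)$-block problem. Its stationarity certificate \eqref{S1-Prob-Opt-2}--\eqref{S1-Prob-Opt-3} for the last block and the affine residual are satisfied trivially (both equal zero, since $\nabla_{N+1}f=0$, $\lambda^{\hat k}=0$, and $x_{N+1}^{\hat k}=b$), so only the block conditions \eqref{S2-Prob-Opt-1}/\eqref{S3-Prob-Opt-1} for $i=1,\ldots,N$ remain. With $A_i=0$ and $\lambda^{\hat k}=0$ these reduce exactly to the two cases in the definition of an $\epsilon$-stationary solution of \eqref{prob:BCD}. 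The sharpened constants $\kappa_5,\kappa_6$ then come from re-reading the bounds \eqref{proximal ADMM-g-bound-3} and \eqref{proximal ADMM-g-bound-4}: since $A_i=0$ for $i\le N$, the term $\beta A_i^\top(\sum_{j>i}A_j(x_j^{k+1}-x_j^k))$ vanishes, so the right-hand side improves from $\kappa_4$ to $(L+\max_i\|H_i\|_2)^2=\kappa_5$ (and the diameter factor yields $\kappa_6$), while the feasibility/multiplier constants $\kappa_1,\kappa_2$ disappear because the residual is identically zero. Substituting these into \eqref{def-K-proximal ADMM-g} gives the stated bound on $K$.

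I expect the main obstacle to be bookkeeping rather than conceptual: one must verify carefully that the invariants $x_{N+1}^k\equiv b$ and $\lambda^k\equiv 0$ genuinely hold for all $k$ under the chosen initialization (so that Step 2's gradient step is inert and the reduction is exact, independently of $\gamma$), and then track precisely which of the constants $\kappa_1$--$\kappa_4$ and of the descent parameter $\tau$ survive the specialization $A_i=0$, $i\le N$. Since the identified iterate inherits $\theta_{\hat k}=O(\epsilon^2)$ from the theorem, the remaining work is simply to confirm that the surviving estimates are exactly the two displayed defining inequalities for \eqref{prob:BCD}; no new descent estimate is required.
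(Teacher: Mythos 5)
Your proposal is correct and follows essentially the same route as the paper: cast \eqref{prob:BCD-equivalent} as an $(N+1)$-block instance of \eqref{noncvx-block-opt} with $A_1=\cdots=A_N=0$, $A_{N+1}=I$, observe that proximal ADMM-g collapses to Algorithm \ref{alg:bcd}, and then specialize the bounds \eqref{proximal ADMM-g-bound-3}--\eqref{proximal ADMM-g-bound-4} so that $\kappa_4$ improves to $\kappa_5$ and the constants $\kappa_1,\kappa_2$ drop out. The only difference is that you explicitly verify the invariants $x_{N+1}^k\equiv b$ and $\lambda^k\equiv 0$ by induction, which the paper asserts without proof; this is a welcome bit of added rigor, not a different argument.
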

\begin{proof} Note that $A_1 = \cdots = A_N = 0$ and $A_{N + 1} = I$ in problem~\eqref{prob:BCD-equivalent}.
By applying proximal ADMM-g with
$\beta > \max\left\{ 18L, \; \max\limits_{1\leq i\leq N}\left\{  \frac{6L^2}{ \sigma_{\min}(H_i) }\right\} \right\}$, Theorem~\ref{Thm:proximal ADMM-g-complexity} holds. In particular,
\eqref{proximal ADMM-g-bound-3} and \eqref{proximal ADMM-g-bound-4} are valid in different settings with $\beta \sqrt{N} \max\limits_{i+1 \le j \le N+1} \left[\|A_j\|_2\right]\|A_i\|_2 = 0$ for $i = 1,\ldots, N$, which leads to the choices of $\kappa_5$ and $\kappa_6$ in the above. Moreover, we do not need to consider the optimality with respect to $x_{N+1}$ and the violation of the affine constraints, thus $\kappa_1$ and $\kappa_2$ in Theorem~\ref{Thm:proximal ADMM-g-complexity} are excluded in the expression of $K$, and the conclusion follows.
\end{proof}

\section{Numerical Experiments}\label{numerical}

We consider the following nonconvex and nonsmooth model of robust tensor PCA with $\ell_1$ norm regularization for third-order tensor of dimension $I_1 \times I_2 \times I_3$. Given an initial estimate $R$ of the CP-rank, we aim to solve the following problem:
\begin{equation}\label{formulation:RPCA-l1}
\begin{array}{cl}
\min_{A,B,C,\ZCal , \ECal , \BCal}& \|\ZCal-\llbracket A,B,C \rrbracket \|^{2}+\alpha\,\| \ECal\|_1 + \alpha_{\mathcal{N}}\| \BCal \|_F^2\\
\mbox{s.t.} &  \ZCal + \ECal + \BCal =  \mathcal{T},
\end{array}
\end{equation}
where $A \in \RR^{I_1\times R}$, $B \in \RR^{I_2\times R}$, $C \in \RR^{I_3\times R}$. 
The augmented Lagrangian function of \eqref{formulation:RPCA-l1} is given by
\begin{eqnarray*}
&& \mathcal{L}_\beta(A,B,C,\ZCal , \ECal , \BCal, \Lambda) \\
&=& \|\ZCal-\llbracket A,B,C \rrbracket \|^{2}+\alpha\,\| \ECal\|_1 + \alpha_{\mathcal{N}}\| \BCal \|^2 - \langle \Lambda, \ZCal + \ECal + \BCal - \mathcal{T} \rangle + \frac{\beta}{2} \| \ZCal + \ECal + \BCal - \mathcal{T}   \|^2.
\end{eqnarray*}

The following identities are useful for our presentation later:
$$
\|\ZCal-\llbracket A,B,C \rrbracket \|^{2} = \|{Z}_{(1)}-A (C \odot B)^{\top} \|^{2} = \|{Z}_{(2)}-B (C \odot A)^{\top} \|^{2} = \|{Z}_{(3)}-C (B \odot A)^{\top} \|^{2},
$$
where ${Z}_{(i)}$ stands for the mode-$i$ unfolding of tensor $\ZCal$ and $\odot$ stands for the Khatri-Rao product of matrices.

Note that there are six block variables in \eqref{formulation:RPCA-l1}, and we choose $\BCal$ as the last block variable. A typical iteration of proximal ADMM-g for solving \eqref{formulation:RPCA-l1} can be described as follows (we chose $H_i=\delta_i I$, with $\delta_i>0, i=1,\ldots,5$):
$$\left\{
\begin{array}{lll}
A^{k+1} &=& \left( ({Z})^k_{(1)} (C^k \odot B^k) + \frac{\delta_1}{2}A^{k} \right) \left(((C^k)^{\top}C^k)\circ ((B^k)^{\top}B^k) + \frac{\delta_1}{2} I_{R\times R} \right)^{-1}\\
B^{k+1} &=& \left( ({Z})^k_{(2)} (C^k \odot A^{k+1}) + \frac{\delta_2}{2}B^{k} \right) \left(((C^k)^{\top}C^k)\circ ((A^{k+1})^{\top}A^{k+1}) + \frac{\delta_2}{2} I_{R\times R} \right)^{-1}\\
C^{k+1} &=& \left( ({Z})^k_{(3)} ( B^{k+1} \odot  A^{k+1}) + \frac{\delta_3}{2}C^{k} \right) \left(((B^{k+1})^{\top}B^{k+1}) \circ ((A^{k+1})^{\top}A^{k+1})+ \frac{\delta_3}{2} I_{R\times R} \right)^{-1}\\
{E}_{(1)}^{k+1} & = & \mathcal{S}\left(\frac{\beta}{\beta + \delta_4}(T_{(1)} + \frac{1}{\beta}\Lambda^{k}_{(1)}-{B}_{(1)}^{k} -{Z}_{(1)}^{k})+ \frac{\delta_4}{\beta + \delta_4}{E}_{(1)}^{k},\frac{\alpha}{\beta + \delta_4}\right)\\
{Z}_{(1)}^{k+1} &= & \frac{1}{2+ 2 \delta_5 + \beta} \left( 2 A^{k+1}(C^{k+1} \odot B^{k+1})^{\top} + 2\delta_5\,({Z}_{(1)})^k + \Lambda^k_{(1)} - \beta({E}_{(1)}^{k+1}+{B}_{(1)}^k - T_{(1)})  \right)\\
{B}_{(1)}^{k+1} &=& {B}_{(1)}^{k} - \gamma\left( 2 \alpha_{\mathcal{N}} {B}_{(1)}^{k}  - \Lambda^k_{(1)} + \beta({E}_{(1)}^{k+1} + {Z}_{(1)}^{k+1} + {B}_{(1)}^{k} - T_{(1)} )\right)\\
\Lambda^{k+1}_{(1)} &=& \Lambda^k_{(1)} - \beta\left({Z}_{(1)}^{k+1} + E_{(1)}^{k+1} + B_{(1)}^{k+1} - {T}_{(1)}\right)
\end{array}
\right.
$$
where $\circ$ is the matrix Hadamard product and $\mathcal{S}$ stands for the soft shrinkage operator. The updates in proximal ADMM-m are almost the same as proximal ADMM-g except ${B}_{(1)}$ is updated as
$$
{B}_{(1)}^{k+1} = \frac{1}{L + \beta }\left( (L - 2 \alpha_{\mathcal{N}}){B}_{(1)}^{k}  + \Lambda^k_{(1)} - \beta({E}_{(1)}^{k+1} + {Z}_{(1)}^{k+1} - T_{(1)} )\right).
$$

On the other hand, note that \eqref{formulation:RPCA-l1} can be equivalently written as
\begin{equation}\label{formulation:RPCA-unconstrained}
\min_{A,B,C,\ZCal, \ECal} \ \|\ZCal-\llbracket A,B,C \rrbracket \|^{2}+\alpha\,\| \ECal\|_1 + \alpha_{\mathcal{N}}\| \ZCal + \ECal - \mathcal{T} \|_F^2,
\end{equation}
which can be solved by the classical BCD method as well as our proximal BCD (Algorithm \ref{alg:bcd}).

In the following we shall compare the numerical performance of BCD, proximal BCD, proximal ADMM-g and proximal ADMM-m for solving \eqref{formulation:RPCA-l1}.
We let $\alpha = 2/ \max\{\sqrt{I_1}, \sqrt{I_2}, \sqrt{I_3} \}$ and $\alpha_{\mathcal{N}}=1$ in model~\eqref{formulation:RPCA-l1}. We apply proximal ADMM-g and proximal ADMM-m to solve \eqref{formulation:RPCA-l1}, and apply BCD and proximal BCD to solve \eqref{formulation:RPCA-unconstrained}. In all the four algorithms we set the maximum iteration number to be $2000$, and the algorithms are terminated either when the maximum iteration number is reached or when $\theta_k$ as defined in~\eqref{theta_k} is less than $10^{-6}$. The parameters used in the two ADMM variants are specified in Table \ref{t1}.

 \begin{table*}[htb]
\centering
\begin{tabular}{c|c|c|c}
\hline
& $H_i$, $i=1,\dots, 5$& $\beta$& $\gamma$\\
\hline
proximal ADMM-g&  $\frac{1}{2} \beta \cdot I$ & $4$& $\frac{1}{\beta}$\\
\hline
proximal ADMM-m&  $\frac{2}{5} \beta \cdot I$ & $5$ & - \\
\hline
\end{tabular}
\caption{Choices of parameters in the two ADMM variants.}
\label{t1}
\end{table*}

In the experiment, we randomly generate $20$ instances for fixed tensor dimension and CP-rank.
Suppose the low-rank part $\ZCal^0$ is of rank $R_{CP}$. It is generated by
$$
\ZCal^0 = \sum \limits_{r=1}^{R_{CP}}a^{1,r}\otimes a^{2,r} \otimes a^{3,r},
$$
where vectors $a^{i,r}$ are generated from standard Gaussian distribution for $i=1,2,3$, $r = 1,\dots ,R_{CP}$.
Moreover, a sparse tensor $\ECal^0$ is generated with cardinality of $0.001\cdot I_1 I_2 I_3$ such that each nonzero component follows from standard Gaussian distribution.
Finally, we generate noise $\BCal^0 = 0.001* \hat{\BCal}$, where $\hat{\BCal}$ is a Gaussian tensor.
Then we set $\mathcal{T} = \ZCal^0 + \ECal^0 +\BCal^0$  as the observed data in~\eqref{formulation:RPCA-l1}. {A proper
initial guess $R$ of the true rank $R_{CP}$ is essential for the success of our algorithms. We can borrow the strategy in matrix completion \cite{WenYinZhang12}, and
start from a large $R$ ($R \geq R_{CP}$) and decrease it aggressively once a dramatic change in the recovered tensor $\ZCal$ is observed. We report the average performance of 20 instances of the four algorithms with initial guess $R = R_{CP}$, $R = R_{CP} + 1$ and $R = R_{CP} + \lceil 0.2*R_{CP} \rceil$ in Tables \ref{tab:rPCA1}, \ref{tab:rPCA2} and \ref{tab:rPCA3}, respectively.}

\begin{table*}[htb]\scriptsize
\centering
\begin{tabular}{c|c|c|c|c|c|c|c|c|c|c|c|c}
\hline
R$_{CP}$ &\multicolumn{3}{c|}{proximal ADMM-g}&\multicolumn{3}{|c|}{proximal ADMM-m}&\multicolumn{3}{c|}{BCD}&\multicolumn{3}{|c}{proximal BCD}\\
\hline
&Iter. & Err. & Num & Iter. & Err. & Num & Iter.  & Err. & Num & Iter.& Err. & Num \\
\hline
\multicolumn{13}{c}{Tensor Size $10\times 20 \times 30$} \\
\hline
3& 371.80 & 0.0362 & 19 & 395.25 & 0.0362 & 19 & 678.15 & 0.7093 &  1 & 292.80 & 0.0362 & 19 \\
10& 632.10 & 0.0320 & 17 & 566.15 & 0.0320 & 17 & 1292.10 & 0.9133 &  0 & 356.00 & 0.0154 & 19 \\
15& 529.25 & 0.0165 & 18 & 545.05 & 0.0165 & 18 & 1458.65 & 0.9224 &  0 & 753.75 & 0.0404 & 15\\
\hline
\multicolumn{12}{c}{Tensor Size $15\times 25 \times 40$} \\
\hline
5&516.30 & 0.0163 & 19 & 636.85 & 0.0437 & 17 & 611.25 & 0.8597 &  0 & 434.25 & 0.0358 & 18\\
10&671.80 & 0.0345 & 17 & 723.20 & 0.0385 & 17 & 1223.60 & 0.9072 &  0 & 592.60 & 0.0335 & 17 \\
20& 776.70 & 0.0341 & 16 & 922.25 & 0.0412 & 15 & 1716.05 & 0.9544 &  0 & 916.90 & 0.0416 & 14 \\
\hline
\multicolumn{12}{c}{Tensor Size $30\times 50 \times 70$} \\
\hline
8& 909.05 & 0.1021 & 13 & 1004.30 & 0.1006 & 13 & 1094.05 & 0.9271 &  0 & 798.05 & 0.1059 & 13\\
20&1304.65 & 0.1233 &  7 & 1386.75 & 0.1387 &  6 & 1635.80 & 0.9668 &  0 & 1102.85 & 0.1444 &  5 \\
40& 1261.25 & 0.0623 & 10 & 1387.40 & 0.0779 &  7 & 2000.00 & 0.9798 &  0 & 1096.80 & 0.0610 &  9\\
\hline
\end{tabular}
\caption{Numerical results for tensor robust PCA with initial guess $R = R_{CP}$}
\label{tab:rPCA1}
\end{table*}

 \begin{table*}[htb]\scriptsize
\centering
\begin{tabular}{c|c|c|c|c|c|c|c|c|c|c|c|c}
\hline
R$_{CP}$ &\multicolumn{3}{c|}{proximal ADMM-g}&\multicolumn{3}{|c|}{proximal ADMM-m}&\multicolumn{3}{c|}{BCD}&\multicolumn{3}{|c}{proximal BCD}\\
\hline
&Iter. & Err. & Num & Iter. & Err. & Num & Iter.  & Err. & Num & Iter.& Err. & Num \\
\hline
\multicolumn{13}{c}{Tensor Size $10\times 20 \times 30$} \\
\hline
3& 1830.65 & 0.0032 & 20 & 1758.90 & 0.0032 & 20 & 462.90 & 0.7763 &  0 & 1734.85 & 0.0032 & 20 \\
10& 1493.20 & 0.0029 & 20 & 1586.00 & 0.0029 & 20 & 1277.15 & 0.9133 &  0 & 1137.15 & 0.0029 & 20\\
15& 1336.65 & 0.0078 & 19 & 1486.40 & 0.0031 & 20 & 1453.30 & 0.9224 &  0 & 945.05 & 0.0106 & 19\\
\hline
\multicolumn{12}{c}{Tensor Size $15\times 25 \times 40$} \\
\hline
5&1267.10 & 0.0019 & 20 & 1291.95 & 0.0019 & 20 & 609.45 & 0.8597 &  0 & 1471.10 & 0.0019 & 20\\
10&1015.25 & 0.0019 & 20 & 1121.00 & 0.0164 & 19 & 1220.50 & 0.9072 &  0 & 1121.40 & 0.0019 & 20\\
20& 814.95 & 0.0019 & 20 & 888.40 & 0.0019 & 20 & 1716.30 & 0.9544 &  0 & 736.70 & 0.0020 & 20\\
\hline
\multicolumn{12}{c}{Tensor Size $30\times 50 \times 70$} \\
\hline
8& 719.45 & 0.0009 & 20 & 608.25 & 0.0009 & 20 & 1094.10 & 0.9271 &  0 & 508.05 & 0.0327 & 18\\
20&726.95 & 0.0088 & 19 & 817.20 & 0.0220 & 17 & 1635.10 & 0.9668 &  0 & 539.25 & 0.0254 & 17 \\
40& 1063.55 & 0.0270 & 16 & 1122.75 & 0.0322 & 15 & 1998.05 & 0.9798 &  0 & 649.10 & 0.0246 & 16\\
\hline
\end{tabular}
\caption{Numerical results for tensor robust PCA with initial guess $R = R_{CP} +1$}
\label{tab:rPCA2}
\end{table*}

 \begin{table*}[htb]\scriptsize
	\centering
	\begin{tabular}{c|c|c|c|c|c|c|c|c|c|c|c|c}
		\hline
		R$_{CP}$ &\multicolumn{3}{c|}{proximal ADMM-g}&\multicolumn{3}{|c|}{proximal ADMM-m}&\multicolumn{3}{c|}{BCD}&\multicolumn{3}{|c}{proximal BCD}\\
		\hline
		&Iter. & Err. & Num & Iter. & Err. & Num & Iter.  & Err. & Num & Iter.& Err. & Num \\
		\hline
		\multicolumn{13}{c}{Tensor Size $10\times 20 \times 30$} \\
		\hline
		3& 1740.95 & 0.0034 & 20 & 1742.35 & 0.0033 & 20 & 385.00 & 0.7320 &  0 & 1816.30 & 0.0033 & 20 \\
		10& 1932.80 & 0.0030 & 20 & 1831.20 & 0.0030 & 20 & 1324.20 & 0.9192 &  0 & 1647.80 & 0.0030 & 20 \\
		15& 1832.55 & 0.0031 & 20 & 1704.75 & 0.0031 & 20 & 1694.20 & 0.9337 &  0 & 1652.50 & 0.0031 & 20 \\
		\hline
		\multicolumn{12}{c}{Tensor Size $15\times 25 \times 40$} \\
		\hline
		5& 1249.75 & 0.0021 & 20 & 1182.65 & 0.0021 & 20 & 630.85 & 0.8594 &  0 & 1291.05 & 0.0021 & 20 \\
		10&1676.50 & 0.0021 & 20 & 1657.85 & 0.0021 & 20 & 1045.25 & 0.9150 &  0 & 1642.20 & 0.0021 & 20 \\
		20& 2000.00 & 0.0022 & 20 & 2000.00 & 0.0022 & 20 & 1891.70 & 0.9608 &  0 & 1828.20 & 0.0022 & 20 \\
		\hline
		\multicolumn{12}{c}{Tensor Size $30\times 50 \times 70$} \\
		\hline
		8& 1156.60 & 0.0009 & 20 & 908.75 & 0.0009 & 20 & 911.55 & 0.9173 &  0 & 642.25 & 0.0009 & 20 \\
		20&1156.10 & 0.0009 & 20 & 1119.50 & 0.0009 & 20 & 1588.30 & 0.9662 &  0 & 1060.25 & 0.0009 & 20 \\
		40& 1978.30 & 0.0009 & 20 & 1965.00 & 0.0009 & 20 & 1978.80 & 0.9805 &  0 & 1937.75 & 0.0009 & 20 \\
		\hline
	\end{tabular}
	\caption{Numerical results for tensor robust PCA with initial guess $R = R_{CP} + \lceil 0.2*R_{CP} \rceil$}
	\label{tab:rPCA3}
\end{table*}

In Tables \ref{tab:rPCA1}, \ref{tab:rPCA2} and \ref{tab:rPCA3}, ``Err.'' denotes the averaged relative error $\frac{\|\ZCal^*-\ZCal^0 \|_F}{\|\ZCal^0 \|_F}$ of the low-rank tensor over 20 instances, where $\ZCal^*$ is the solution returned by the corresponding algorithm; ``Iter.'' denotes the averaged number of iterations over 20 instances; ``Num'' records the number of solutions (out of 20 instances) that have relative error less than $0.01$. 

Tables \ref{tab:rPCA1}, \ref{tab:rPCA2} and \ref{tab:rPCA3} suggest that BCD mostly converges to a local solution rather than the global optimal solution, while the other three methods are much better in finding the global optimum.
{It is interesting to note that the results presented in Table~\ref{tab:rPCA3} are better than that of Table~\ref{tab:rPCA2} and Table~\ref{tab:rPCA1} when a  larger basis is allowed in tensor factorization. Moreover, in this case, the proximal BCD usually consumes less number of iterations than the two ADMM variants.}

\section*{Acknowledgements}
We would like to thank Professor Renato D. C. Monteiro and two anonymous referees for their insightful comments, which helped improve this paper significantly.


\begin{thebibliography}{99}

\bibitem{Allen-TPCA-ATSTATS-2012}
G.~Allen.
\newblock Sparse higher-order principal components analysis.
\newblock In {\em The 15th International Conference on Artificial Intelligence
  and Statistics}, 2012.

\bibitem{Ames-Hong-2016}
B.~Ames and M.~Hong.
\newblock Alternating direction method of multipliers for sparse zero-variance
  discriminant analysis and principal component analysis.
\newblock {\em Computational Optimization and Applications}, 2016.

\bibitem{Attouch-Bolte-Svaiter-2010}
H.~Attouch, J.~Bolte, P.~Redont, and A.~Soubeyran.
\newblock Proximal alternating minimization and projection methods for
  nonconvex problems: an approach based on the {Kurdyka-\L}ojasiewicz
  inequality.
\newblock {\em Mathematics of Operations Research}, 35(2):438--457, 2010.

\bibitem{Bach-CG-2015}
F.~Bach.
\newblock Duality between subgradient and conditional gradient methods.
\newblock {\em SIAM J. Optim.}, 25(1):25(1):115--129, 2015, 2015.

\bibitem{Beck-CG-2015}
A.~Beck and S.~Shtern.
\newblock Linearly convergent away-step conditional gradient for nonstrongly
  convex functions.
\newblock {\em Mathematical Programming}, 164(1-2):1--27, 2017.

\bibitem{Bian-Chen-2013}
W.~Bian and X.~Chen.
\newblock Worst-case complexity of smoothing quadratic regularization methods
  for non-{L}ipschitzian optimization.
\newblock {\em SIAM Journal on Optimization}, 23:1718--1741, 2013.

\bibitem{Bian-Chen-2014}
W.~Bian and X.~Chen.
\newblock Feasible smoothing quadratic regularization method for box
  constrained non-{L}ipschitz optimization.
\newblock {\em Technical Report}, 2014.

\bibitem{Bian-Chen-Ye-2015}
W.~Bian, X.~Chen, and Y.~Ye.
\newblock Complexity analysis of interior point algorithms for non-{L}ipschitz
  and nonconvex minimization.
\newblock {\em Mathematical Programming}, 149:301--327, 2015.

\bibitem{Bolte-Daniilidis-Lewis-2006}
J.~Bolte, A.~Daniilidis, and A.~Lewis.
\newblock The {\L}ojasiewicz inequality for nonsmooth subanalytic functions
  with applications to subgradient dynamical systems.
\newblock {\em SIAM Journal on Optimization}, 17:1205--1223, 2006.

\bibitem{Bolte-Daniilidis-Lewis-Shiota-2007}
J. Bolte, A. Daniilidis, A. Lewis, and M. Shiota.
\newblock Clarke subgradients of stratifiable functions.
\newblock {\em SIAM Journal on Optimization}, 18:556--572, 2007.

\bibitem{Bolte-characterizations-2010}
J.~Bolte, A.~Daniilidis, O.~Ley, and L.~Mazet.
\newblock Characterizations of {\L}ojasiewicz inequalities: subgradient flows,
  talweg, convexity.
\newblock {\em Transactions of the American Mathematical Society},
  362(6):3319--3363, 2010.

\bibitem{Bolte-Sabach-Teboulle-2014}
J.~Bolte, S.~Sabach, and M.~Teboulle.
\newblock Proximal alternating linearized minimization for nonconvex and
  nonsmooth problems.
\newblock {\em Mathematical Programming}, 146:459--494, 2014.

\bibitem{Boyd2011}
S.~Boyd, N.~Parikh, E.~Chu, B.~Peleato, and J.~Eckstein.
\newblock Distributed optimization and statistical learning via the alternating
  direction method of multipliers.
\newblock {\em Foundations and Trends in Machine Learning}, 3(1):1--122, 2011.

\bibitem{Bredies09}
K.~Bredies.
\newblock A forward-backward splitting algorithm for the minimization of
  non-smooth convex functionals in {B}anach space.
\newblock {\em Inverse Problems}, 25(1), 2009.

\bibitem{Bredies09b}
K.~Bredies, D.~A. Lorenz, and P.~Maass.
\newblock A generalized conditional gradient method and its connection to an
  iterative shrinkage method.
\newblock {\em Computational Optimization and Applications}, 42(2):173--193,
  2009.

\bibitem{Candes-Enhancing-2008}
E.~J. Cand\`es, M.~B. Wakin, and S.~P. Boyd.
\newblock Enhancing sparsity by reweighted $\ell_1$ minimization.
\newblock {\em Journal of Fourier analysis and applications}, 14(5-6):877--905,
  2008.

\bibitem{CartisGouldToint10}
C.~Cartis, N.~I.~M. Gould, and Ph.~L. Toint.
\newblock On the complexity of steepest descent, {N}ewton's and regularized
  {N}ewton's methods for nonconvex unconstrained optimization.
\newblock {\em SIAM Journal on Optimization}, 20(6):2833--2852, 2010.

\bibitem{CartisGouldToint11}
C.~Cartis, N.~I.~M. Gould, and Ph.~L. Toint.
\newblock Adaptive cubic overestimation methods for unconstrained optimization.
  part ii: worst-case function-evaluation complexity.
\newblock {\em Mathematical Programming, Series A}, 130(2):295--319, 2011.

\bibitem{CartisGouldToint13a}
C.~Cartis, N.~I.~M. Gould, and Ph.~L. Toint.
\newblock An adaptive cubic regularization algorithm for nonconvex optimization
  with convex constraints and its function-evaluation complexity.
\newblock {\em IMA Journal of Numerical Analysis}, 2013.

\bibitem{Chen-Ge-Wang-Ye-2014}
X.~Chen, D.~Ge, Z.~Wang, and Y.~Ye.
\newblock Complexity of unconstrained $l_2$-$l_p$ minimization.
\newblock {\em Mathematical Programming}, 143:371--383, 2014.

\bibitem{Curtis-Robinson-Samadi-2016}
F.~Curtis, D.P.~Robinson, and M.~Samadi.
\newblock A trust region algorithm with a worst-case iteration complexity of \(\mathcal{O}(\epsilon ^{-3/2})\) for nonconvex optimization.
\newblock {\em Mathematical Programming}, 1--32, 2016.

\bibitem{Devo-Fran-Nesterov-2013}
O.~Devolder, G.~Fran\c{c}ois, and Yu. Nesterov.
\newblock First-order methods of smooth convex optimization with inexact
  oracle.
\newblock {\em Mathematical Programming, Ser. A}, 146:37--75, 2014.

\bibitem{Dutta-Deb-Tulshyan-Arora-2013}
J.~Dutta, K.~Deb, R.~Tulshyan, and R.~Arora.
\newblock Approximate KKT points and a proximity measure for termination.
\newblock {\em Journal of Global Optimization}, 56:1463--1499, 2013.



\bibitem{Fan-Variable-2001}
J.~Fan and R.~Li.
\newblock Variable selection via nonconcave penalized likelihood and its oracle
  properties.
\newblock {\em Journal of the American statistical Association},
  96(456):1348--1360, 2001.

\bibitem{FrankWolfe56}
M.~Frank and P.~Wolfe.
\newblock An algorithm for quadratic programming.
\newblock {\em Naval Research Logistics Quarterly}, 3:95--110, 1956.

\bibitem{FreundGrigas13}
R.~M. Freund and P.~Grigas.
\newblock New analysis and results for the {F}rank-{W}olfe method.
\newblock {\em Mathematical Programming}, 2013.

\bibitem{GaoJiangZhang17}
X.~Gao, B.~Jiang, and S.~Zhang.
\newblock On the Information-Adaptive Variants of the ADMM: an Iteration Complexity Perspective.
\newblock {\em Journal of Scientific Computing}, published online, https://doi.org/10.1007/s10915-017-0621-6, 2017.

\bibitem{GeHeHe14}
D.~Ge, R.~He, and S.~He.
\newblock A three criteria algorithm for $l_2-l_p$ minimization problem with
  linear constraints.
\newblock {\em Mathematical Programming}, 166(1): 131-158, 2017.

\bibitem{glz13}
S.~Ghadimi, G.~Lan, and H.~Zhang.
\newblock Mini-batch stochastic approximation methods for nonconvex stochastic
  composite optimization.
\newblock {\em Mathematical Programming}, 155(1):1-39, 2016.

\bibitem{Gong-2013-General}
P.~Gong, C.~Zhang, Z.~Lu, J.~Huang, and J.~Ye.
\newblock A general iterative shrinkage and thresholding algorithm for
  non-convex regularized optimization problems.
\newblock In {\em ICML}, 37--45, 2013.




\bibitem{Nemirovski-CG-2015}
Z.~Harchaoui, A.~Juditsky, and A.~Nemirovski.
\newblock Conditional gradient algorithms for norm-regularized smooth convex
  optimization.
\newblock {\em Math. Program.}, 152:75--112, 2015.

\bibitem{Hong-2014-distributed}
M.~Hong.
\newblock A distributed, asynchronous and incremental algorithm for nonconvex
  optimization: An {ADMM} based approach.
\newblock {\em http://arxiv.org/pdf/1412.6058v1.pdf}, 2014.

\bibitem{Hong16}
M.~Hong.
\newblock Decomposing linearly constrained nonconvex problems by a proximal
  primal dual approach: Algorithms, convergence, and applications.
\newblock {\em https://arxiv.org/abs/1604.00543}, 2016.

\bibitem{HongLuoRazaviyayn15}
M.~Hong, Z.-Q. Luo, and M~M.~Razaviyayn.
\newblock Convergence analysis of alternating direction method of multipliers
  for a family of nonconvex problems.
\newblock {\em SIAM Journal on Optimization}, 2015.

\bibitem{Jaggi-ICML-2013}
M.~Jaggi.
\newblock Revisiting {F}rank-{W}olfe: Projection-free sparse convex
  optimization.
\newblock In {\em ICML}, 2013.

\bibitem{Jiang-Yang-Zhang-2016}
B.~Jiang, F.~Yang, and S.~Zhang.
\newblock Tensor and its {T}ucker core: the invariance relationships.
\newblock {\em Numerical Linear Algebra with Applications}, 24(3):e2086, 2017.

\bibitem{Kurdyka-1998}
K.~Kurdyka.
\newblock On gradients of functions definable in o-minimal structures.
\newblock {\em Annales de l'institut Fourier}, 146:769--783, 1998.

\bibitem{Lan-CG-2014}
G.~Lan and Y.~Zhou.
\newblock Conditional gradient sliding for convex optimization.
\newblock 2014.

\bibitem{LiPong15}
G.~Li and T.~K. Pong.
\newblock Global convergence of splitting methods for nonconvex composite
  optimization.
\newblock {\em SIAM Journal on Optimization}, 25(4):2434--2460, 2015.

\bibitem{Lin-Ma-Zhang-2015-free-gamma}
T.~Lin, S.~Ma, and S.~Zhang.
\newblock Global convergence of unmodified 3-block {ADMM} for a class of convex
  minimization problems.
\newblock {\em Journal of Scientific Computing}, published online, https://doi.org/10.1007/s10915-017-0612-7, 2017.

\bibitem{Lin-Ma-Zhang-2015}
T.~Lin, S.~Ma, and S.~Zhang.
\newblock Iteration complexity analysis of multi-block {ADMM} for a family of
  convex minimization without strong convexity.
\newblock {\em Journal of Scientific Computing}, 69(1):52-81, 2016.

\bibitem{Liu-Ma-Dai-Zhang-2015}
Y.~Liu, S.~Ma, Y.~Dai, and S.~Zhang.
\newblock A smoothing {SQP} framework for a class of composite $\ell_q$
  minimization over polyhedron.
\newblock {\em Mathematical Programming Series A}, 158(1): 467-500, 2016.

\bibitem{Lojasiewicz-1963}
S.~{\L}ojasiewicz.
\newblock {\em Une propri$\acute{e}$t$\acute{e}$ topologique des sous-ensembles
  analytiques r$\acute{e}$els, Les $\acute{E}$quations aux
  D$\acute{e}$riv$\acute{e}$es Partielles}.
\newblock $\acute{E}$ditions du centre National de la Recherche Scientifique,
  Paris, 1963.

\bibitem{Lacoste-Julien-2016}
S.~Lacoste-Julien.
\newblock Convergence rate of {F}rank-{W}olfe for non-convex objectives.
\newblock {\em Preprint http://arxiv.org/abs/1607.00345}, 2016.

\bibitem{Lafond-Wai-Moulines-2016}
J.~Lafond, H.-T.~Wai, and E.~Moulines.
\newblock On the Online Frank-Wolfe Algorithms for Convex and Non-convex Optimizations
\newblock {\em Preprint http://arxiv.org/abs/1510.01171}

\bibitem{Teboulle-2013-SIAM-review}
R.~Luss and M.~Teboulle.
\newblock Conditional gradient algorithms for rank one matrix approximations
  with a sparsity constraint.
\newblock {\em SIAM Review}, 55:65--98, 2013.

\bibitem{Martinez-Raydan-2016}
J.M.~Mart{\i}nez and M. Raydan.
\newblock Cubic-regularization counterpart of a variable-norm trust-region method for unconstrained minimization.
\newblock {\em Journal of Global Optimization}, 1--19, 2016.

\bibitem{Mu-Zhang-Wright-Goldfarb-2016}
C.~Mu, Y.~Zhang, J.~Wright, and D.~Goldfarb.
\newblock Scalable robust matrix recovery: {F}rank-{W}olfe meets proximal
  methods.
\newblock {\em SIAM Journal on Scientific Computing}, 38(5):3291--3317, 2016.

\bibitem{NesterovConvexBook2004}
Y.~E. Nesterov.
\newblock {\em Introductory lectures on convex optimization}.
\newblock Applied Optimization. Kluwer Academic Publishers, Boston, MA, 2004.

\bibitem{Ngai-Luc-Thera-2002}
H. V. Ngai, D. T. Luc, and M. Th{\'e}ra.
\newblock Extensions of Fr{\'e}chet $\epsilon$-Subdifferential Calculus and Applications.
\newblock {\em Journal of Mathematical Analysis and Applications}, 268:266--290, 2002.



\bibitem{RockafellarWets98}
R.~T. Rockafellar and R.~Wets.
\newblock {\em Variational Analysis}.
\newblock Springer, Volume 317 of Grundlehren der Mathematischen Wissenschafte,
  1998.

\bibitem{Shen-oms-2014}
Y.~Shen, Z.~Wen, and Y.~Zhang.
\newblock Augmented {L}agrangian alternating direction method for matrix
  separation based on low-rank factorization.
\newblock {\em Optimization Methods and Software}, 29(2):239--263, 2014.

\bibitem{WangCaoXu15}
F.~Wang, W.~Cao, and Z.~Xu.
\newblock Convergence of multiblock {Bregman ADMM} for nonconvex composite
  problems.
\newblock {\em Preprint http://arxiv.org/abs/1505.03063}, 2015.

\bibitem{WangYinZeng15}
Y.~Wang, W.~Yin, and J.~Zeng.
\newblock Global convergence of {ADMM} in nonconvex nonsmooth optimization.
\newblock {\em Preprint http://arxiv.org/abs/1511.06324}, 2015.

\bibitem{WenYinZhang12}
Z.~Wen, W.~Yin, and Y.~Zhang
\newblock
Solving a low-rank factorization model for matrix completion by a nonlinear successive over-relaxation algorithm.
\newblock {\em Mathematical Programming Computation}, 4(4):333--361, 2012.

\bibitem{Xu-APGM-Tucker-2015}
Y.~Xu.
\newblock Alternating proximal gradient method for sparse nonnegative {T}ucker
  decomposition.
\newblock {\em Mathematical Programming Computation}, 7(1):39--70, 2015.

\bibitem{Yang-Pong-Chen-2016}
L.~Yang, T.K. Pong, and X.~Chen.
\newblock Alternating direction method of multipliers for a class of nonconvex
  and nonsmooth problems with applications to background/foreground extraction.
\newblock {\em SIAM J. Imaging Sci.}, 10:74-110, 2017.

\bibitem{Yu-Zhang-Schuurmans-2014}
Y.~Yu, X.~Zhang, and D.~Schuurmans.
\newblock Generalized conditional gradient for sparse estimation.
\newblock {\em Preprint arXiv:1410.4828v1}, 2014.

\bibitem{Zhang-Nearly-2010}
C.-H. Zhang.
\newblock Nearly unbiased variable selection under minimax concave penalty.
\newblock {\em The Annals of Statistics}, pages 894--942, 2010.

\bibitem{Zhang-Analysis-2010}
T.~Zhang.
\newblock Analysis of multi-stage convex relaxation for sparse regularization.
\newblock {\em The Journal of Machine Learning Research}, 11:1081--1107, 2010.

\bibitem{Zhang-Multi-2013}
T.~Zhang.
\newblock Multi-stage convex relaxation for feature selection.
\newblock {\em Bernoulli}, 19(5B):2277--2293, 2013.


\end{thebibliography}
\end{document}